\newtheorem{defn}{Definition}[section]
\newtheorem{lemma}[defn]{Lemma}
\newtheorem{remark}[defn]{Remark}
\newtheorem{theorem}[defn]{Theorem}
\newtheorem{corollary}[defn]{Corollary}
\newtheorem{proposition}[defn]{Proposition}
\newtheorem{claim}[defn]{Claim}
\numberwithin{equation}{section}
\begin{document}
	
\title{Criteria for finite time blow up for a system of Klein-Gordon equations}
\author{Yan Cui}
%\thanks{The first author is supported by the Natural Science Foundation of China (No. 12001555). }
\address{Department of Mathematics, Jinan University, Guangzhou, P. R. China.}
\email{cuiy32@jnu.edu.cn}

\author{Bo Xia}
\address{School of Mathematical Sciences,USTC, Hefei, P. R. China.}
\email{xiabomath@ustc.edu.cn,xaboustc@hotmail.com}

\begin{abstract}
	We give three conditions on initial data for the blowing up of the corresponding solutions to some system of Klein-Gordon equations on the three dimensional Euclidean space. We first use Levine's concavity argument to show that the negativeness of energy leads to the blowing up of local solutions in finite time. For the data of positive energy, we give a sufficient condition so that the corresponding solution blows up in finite time. This condition embodies datum with arbitrarily large energy. At last we use Payne-Sattinger's potential well argument to classify the datum with energy not so large (to be exact, below the ground states) into two parts: one part consists of datum leading to blowing-up solutions in finite time, while the other part consists of datum that leads to the global solutions.
\end{abstract}
\maketitle	
%\tableofcontents	

%--------------------------------------------------------------------

\section{Introduction}
In this article, we are going to give {several }criteria for finite time blow up of solutions to the following system of Klein-Gordon equations
\begin{equation}\label{eq:skg:int}
\left\{
\begin{split}
 u_{tt}-\Delta u+u&=u^3+\beta v^2u, \mbox { in } \mathbb{R}\times\mathbb{R}^3,\\
 v_{tt}-\Delta v+v&=v^3+\beta u^2v, \mbox { in } \mathbb{R}\times\mathbb{R}^3,\\
(u(0),u_t(0))&=(u_0,u_1),\mbox { in } \mathbb{R}^3, \\
 (v(0),v_t(0))&=(v_0,v_1),\mbox { in }\mathbb{R}^3,
\end{split}
\right.
\end{equation}	
where $\beta\in{\mathbb{R}}$ is a parameter to be specified in each criterion.

This system was introduced by Segal in \cite{segal1965} to model the motion of charged mesons in electromagnetic field. Similar systems of coupled wave and Klein-Gordon equations are also proposed to model some physical phenomena such as the interaction of mass and massless classical fields, and long longitudinal waves in elastic bi-layers, see \cite{georgiev1990,  ionescu2019, 2007Coupled} and references therein for more details.

For the mathematical study, we can apply a scaling argument to the corresponding homogeneous system, to see that the system \eqref{eq:skg:int} falls into the energy subcritical class. Hence, we can apply the Banach fixed point theorem in the energy space $H^1\times H^1$ to construct a local solution $\left(u(t),v(t)\right)$ for $t\in[0,T)$ with $T$ small ({see Section \ref{sec:pre} for details}). As long as the solution $\left(u(t),v(t)\right)$ exists, the energy functional
\begin{multline}\label{energy}
		E(t):=E[u(t),\partial_tu(t);v(t),\partial_tv(t)]=\frac{1}{2}\int_{\mathbb{R}^3} \left[|\nabla u|^2+|u|^2+|u_t|^2+|\nabla v|^2+|v|^2+|v_t|^2\right]\mathrm{d}x\\
 -\frac{1}{4}\int_{\mathbb{R}^3}\left(u^4+v^4+2\beta u^2v^2\right)\mathrm{d}x
\end{multline}
remains invariant as $t$ varies. However, as $t$ varies, the kinetic energy (the first term) and the potential energy (the second term) might become arbitrarily large, so that we can not repeat the fixed point argument to extend the local solution to a global one. Even worse the solution might exhibit the blow up phenomenon. Here by the blow up of solutions, we mean that its associated `mass'
\begin{equation}\label{mass}
	y(t):=||(u(t),v(t))||_{L^2\times L^2}^2=||u||_{L^2}^2+||v||_{L^2}^2
\end{equation}
is going to become infinite in finite time. In this article, we will give some sufficient conditions on the initial datum for such a phenomenon to occur.

\subsection{Negative energy regime}\label{subsec:1.1}
The first one we are going to give falls into the philosophy that `negative energy leads to blow up in finite time', as is stated
\begin{theorem}\label{thm:neg:int}
	Fix $\beta\in [-1,\infty)$. Given the initial data $\left((u_0,u_1),(v_0,v_1)\right)\in \mathcal{H}^2$ with $\mathcal{H}:=H^1\times L^2$, let $\left(u(t),v(t)\right)$ be the corresponding solution to \eqref{eq:skg:int}. Assume either of the following holds:
	\begin{enumerate}[(i)]
		\item  the negative energy condition $E(0)<0$;
		\item  the zero energy conditon $E(0)=0$, together with {the angle condition} that $$\int_{\mathbb{R}^3}\left[u_0u_1+v_0v_1\right]\mathrm{d}x>0.$$
	\end{enumerate}
	Then, $\left(u(t),v(t)\right)$ blows up in finite time.
\end{theorem}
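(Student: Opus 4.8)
\medskip
\noindent\emph{Proof proposal.} The plan is to run Levine's concavity argument on the mass $y(t)$ of \eqref{mass}. Since $\big(u(t),v(t)\big)$ is a local $\mathcal{H}^2$-solution, $y$ is $C^2$ on the maximal existence interval $[0,T_{\max})$, with
\[
y'(t)=2\int_{\mathbb{R}^3}\big(uu_t+vv_t\big)\,\mathrm{d}x,\qquad
y''(t)=2\int_{\mathbb{R}^3}\big(u_t^2+v_t^2+uu_{tt}+vv_{tt}\big)\,\mathrm{d}x .
\]
First I would substitute the two equations of \eqref{eq:skg:int}, integrate by parts to rewrite $\int(uu_{tt}+vv_{tt})$ as $-\int\big(|\nabla u|^2+|\nabla v|^2+u^2+v^2\big)+\int\big(u^4+v^4+2\beta u^2v^2\big)$, and then eliminate the quartic terms via the conserved energy \eqref{energy} (which gives $\int(u^4+v^4+2\beta u^2v^2)=2\int(|\nabla u|^2+u^2+u_t^2+|\nabla v|^2+v^2+v_t^2)-4E(0)$). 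The outcome is an identity of the form
\[
y''(t)=6\int_{\mathbb{R}^3}\big(u_t^2+v_t^2\big)\,\mathrm{d}x+2\int_{\mathbb{R}^3}\big(|\nabla u|^2+|\nabla v|^2+u^2+v^2\big)\,\mathrm{d}x-8E(0).
\]
This computation, and in particular handling the cross term $2\beta u^2v^2$ so that it is absorbed cleanly into the energy, is the step that carries all the bookkeeping.

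Under either hypothesis $E(0)\le 0$, so the last term above is $\ge 0$; discarding the nonnegative gradient--mass integral gives $y''(t)\ge 6\int(u_t^2+v_t^2)\ge 0$, while keeping it gives the sharper bound $y''(t)\ge -8E(0)$. Cauchy--Schwarz applied to $y'(t)$ yields $\big(y'(t)\big)^2\le 4\,y(t)\int(u_t^2+v_t^2)$, hence $y(t)\,y''(t)\ge \tfrac32\big(y'(t)\big)^2$; equivalently, writing $L(t):=y(t)^{-1/2}$,
\[
L''(t)=-\tfrac12\,y(t)^{-5/2}\Big(y(t)y''(t)-\tfrac32\big(y'(t)\big)^2\Big)\le 0 ,
\]
so $L$ is concave on $[0,T_{\max})$. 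One must first check that $y(t)>0$ there: the data cannot be trivial (otherwise $E(0)<0$ is impossible, and in case (ii) the angle condition fails), and if $y$ vanished at some time then conservation of energy would force $E(0)=\tfrac12\int(u_t^2+v_t^2)\ge 0$, with equality only for the trivial solution, contradicting $E(0)<0$ in case (i) and $y(0)>0$ in case (ii).

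Finally I would exhibit a time $t_1\in[0,T_{\max})$ with $y'(t_1)>0$. In case (ii) take $t_1=0$, since the angle condition says exactly $y'(0)>0$ and $y''\ge 0$ keeps $y'$ positive afterwards. In case (i), from $y''(t)\ge -8E(0)>0$ we get $y'(t)\ge y'(0)-8E(0)\,t\to+\infty$, so such a $t_1$ exists. Then $L'(t_1)=-\tfrac12 y(t_1)^{-3/2}y'(t_1)<0$, and a positive concave function cannot keep a strictly negative derivative: from $0<L(t)\le L(t_1)+L'(t_1)(t-t_1)$ the majorant would become nonpositive at $t=t_1+2y(t_1)/y'(t_1)$, so $T_{\max}\le t_1+2y(t_1)/y'(t_1)<\infty$ and $L(t)\to 0$, i.e.\ $y(t)\to\infty$, as $t$ approaches this finite time. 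Hence $\big(u(t),v(t)\big)$ blows up in finite time. The two genuinely delicate points are the exact sign accounting in the $y''$-identity and, for the negative-energy case, the observation that the possible deficiency $y'(0)\le 0$ is repaired after a finite time because of the strict lower bound on $y''$.
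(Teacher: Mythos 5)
Your proposal is correct and follows essentially the same route as the paper: the same second-derivative identity for $y$ (namely \eqref{eq:400} with $\gamma=1$), elimination of the quartic term via energy conservation, Cauchy--Schwarz to obtain $y\,y''\ge\tfrac32\left(y'\right)^2$, and the resulting concavity of $y^{-1/2}$, which is exactly the content of Lemma \ref{lem:levine:1}. The only real difference is cosmetic: you fix $\gamma=1$, so the quartic term cancels identically and the hypothesis $\beta\ge-1$ is never invoked, whereas the paper keeps $\gamma\in(0,1]$ general and needs $\beta\ge-1$ in Lemma \ref{lem:nehari:neg} only when $\gamma<1$.
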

{\begin{remark}\label{rmk:01080}
		Comparing with the forthcoming $\mathbf{Theorem}$ \ref{thm:payne-sattinger:int}, one can find the range of $\beta$ is much larger in $\mathbf{Theorem}$ \ref{thm:neg:int}, and that for $\beta$ in the common range $[0,\infty)$,  $\mathbf{Theorem}$ \ref{thm:payne-sattinger:int} implies  $\mathbf{Theorem}$ \ref{thm:neg:int}. But for $\beta\in[-1,0)$, we would not expect a result as that in $\mathbf{Theorem}$ \ref{thm:payne-sattinger:int}, see $\mathbf{Remark}$ \ref{rmk:0108} for the technical explanation. Therefore, we shall give the full proof of $\mathbf{Theorem}$ \ref{thm:neg:int} in an independent section. One can also find that the proof, especially $\mathbf{Lemma}$ \ref{lem:nehari:neg}, is of independent interest.
	\end{remark}
%\begin{remark}
%	In the case $\beta\in(-1,1)$, the matrix $\begin{bmatrix} 1& \beta\\ \beta & 1
%	\end{bmatrix}$ is positive definite, and hence the result in \textit{(i)} is indeed proved in \cite{zhang2003}. But for $\beta=-1$ and $\beta\geq 1$, our result is new.
%\end{remark}

In order to see how the negativeness of initial energy is used, let's briefly explain the key idea in the proof of \textit{(i)} in Theorem \ref{thm:neg:int}. We argue by contradiction, by assuming that the solution exists globally. Fix $\gamma\in(0,1)$. Differentiating $y(t)$ and doing some algebraic calculations (indeed, this is completed by combining \eqref{eq:ene:d2} and \eqref{eq:2000}), we obtain
\begin{equation}\label{eq:5001}
		\frac{\mathrm{d}^2}{\mathrm{d}t^2}y(t)\geq (4+2\gamma)\|(u_t,v_t)\|^2_{L^2\times L^2}+2\gamma \left(\|(u,v)\|^2_{H^1\times H^1}\right)-4(1+\gamma)E(t)\geq -4(1+\gamma)E(t).
\end{equation}
From this, using the conservation of energy and doing some differential calculations, it follows
\begin{equation}\label{eq:5002}
	\frac{\mathrm{d}^2}{\mathrm{d}t^2}y(t)\geq -4(1+\gamma)E(0).
\end{equation}
Since $-4(1+\gamma)E(0)$ is strictly positive, we infer from the fundamental theorem of calculus that there exists $t_b>0$ such that $\frac{\mathrm{d}}{\mathrm{d}t}y(t_b)>0$ and $y(t_b)>0$. Then we can apply Levine's lemma (see Lemma \ref{lem:levine:1}) with $\psi=y$ to see that $y$ blows up in finite time.

We see that the negativeness of initial energy, allows us to conclude from \eqref{eq:5002} that $y(t)$ and its derivative $\frac{\mathrm{d}}{\mathrm{d}t}y(t)$ will become strictly positive after some finite time, which leads to blowing up. However, if the energy is positive, we can not use this argument directly. Fortunately, we can use the projection functional and Payne-Sattinger well argument to give two sufficient conditions for finite time blow up, which contains datum of positive energy. These will serve as our second and third criteria.
}

\subsection{Criterion in terms of the projection functional}
As is alluded to above, our second criterion will allow the energy to be positive and the angle condition to be non-negative, but at the cost that the initial energy is under control of $y(t)$ and the projection functional $P(t)$ defined for the solution $\left(u(t),v(t)\right)$ to \eqref{eq:skg:int} by
\begin{equation}\label{eq:proj:int}
	P(t):=\begin{cases}
	\frac{\left\langle\big(u(t),v(t)\big),\big(u_t(t),v_t(t)\big)\right\rangle^2_{(L^2\times L^2)^2}}{||(u(t),v(t))||_{L^2\times L^2}^2}, \quad \mathrm{if}\ ||(u(t),v(t))||_{L^2\times L^2}\not=0;\\
	0,\quad\ \ \ \ \ \ \ \ \ \ \ \ \ \ \ \ \ \ \ \ \ \ \ \ \ \ \ \ \ \ \ \ \ \  \mathrm{if}\ ||(u(t),v(t))||_{L^2\times L^2}=0.
	\end{cases}
\end{equation}
 More precisely, this is stated as
\begin{theorem}\label{thm:skg:int}
	Fix $\beta\in\mathbb{R}$. Let $\left((u_0,u_1),(v_0,v_1)\right)\in \mathcal{H}^2$ and $\left(u(t),v(t)\right)$ its corresponding solution to \eqref{eq:skg:int}. Assume that $\left(u(t),v(t)\right)$ satisfies
	\begin{enumerate}[(i)]
		\item $\left\|\big(u_0,v_0\big)\right\|_{L^2\times L^2}>0$;
		\item $\big\langle(u_0,v_0),(u_1,v_1)\big\rangle_{(L^2\times L^2)^2}\geq0$;
		\item $\frac{1}{4}y(0)+\frac{1}{2}P(0)\geq E(0)>0$.
	\end{enumerate}
	Then the solution $\left(u(t),v(t)\right)$ blows up in finite time.
\end{theorem}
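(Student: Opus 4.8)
The plan is to run Levine's concavity method directly on the mass $y(t)=\|(u(t),v(t))\|_{L^2\times L^2}^2$, the novel point being that hypothesis (iii) is exactly the condition needed to propagate in time the pointwise bound $y(t)+2P(t)\ge 4E(0)$; this bound forces $y$ to be convex and eventually to exceed $4E(0)$, and past that time the standard concavity inequality holds.

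First I would differentiate $y$ twice. Since $\langle(u,v),(u_t,v_t)\rangle_{(L^2\times L^2)^2}=\tfrac12 y'$, the two equations of \eqref{eq:skg:int} and an integration by parts give
\[
y''=2\|(u_t,v_t)\|_{L^2\times L^2}^2-2\|(u,v)\|_{H^1\times H^1}^2+2\int_{\mathbb R^3}\bigl(u^4+v^4+2\beta u^2 v^2\bigr)\,\mathrm dx .
\]
As $\beta\in\mathbb R$ is unrestricted the quartic term carries no sign — this is precisely where the $\gamma<1$ device used for Theorem \ref{thm:neg:int} is unavailable — so I would eliminate it entirely using the conservation of \eqref{energy}, obtaining
\[
y''=6K+2\|(u,v)\|_{H^1\times H^1}^2-8E(0)\ \ge\ 6K+2y-8E(0),\qquad K:=\|(u_t,v_t)\|_{L^2\times L^2}^2 ,
\]
using $\|(u,v)\|_{H^1\times H^1}^2\ge y$. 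Two elementary facts are then recorded: by Cauchy--Schwarz $(y')^2=4\langle(u,v),(u_t,v_t)\rangle^2\le 4yK$, while by the definition \eqref{eq:proj:int} one has $(y')^2=4yP$ on $\{y\ne0\}$, so $P\le K$; and hypothesis (iii), multiplied by $4$, reads $y(0)+2P(0)\ge 4E(0)>0$.

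The crux is to show that $J(t):=y(t)+2P(t)=y(t)+\tfrac{(y')^2}{2y}$ stays $\ge 4E(0)$ for as long as the solution exists. I would prove this by a continuation argument. On an interval where $J\ge 4E(0)$ one has $K\ge P\ge 2E(0)-\tfrac12 y$, so $y''\ge 6K+2y-8E(0)\ge 4E(0)-y\ge0$ in the region $y<4E(0)$ and $y''\ge 2y-8E(0)\ge0$ in the region $y\ge4E(0)$; thus $y''\ge0$, and with $y'(0)\ge0$ from (ii) this gives $y\ge y(0)>0$ and $y'\ge0$ throughout the interval. A direct computation yields
\[
J'=\frac{y'}{2y^2}\Bigl(2y^2+2yy''-(y')^2\Bigr),
\]
and inserting $y''\ge 6K+2y-8E(0)$ and $(y')^2\le4yK$ shows the parenthesis is $\ge 2y\bigl(3y+4K-8E(0)\bigr)$, which is $>0$ once $J\ge4E(0)$ (again because $K\ge 2E(0)-\tfrac12 y$); hence $J'\ge0$ and the bound $J\ge J(0)\ge4E(0)$ is preserved. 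The coefficients $\tfrac14,\tfrac12$ in (iii) are chosen exactly so that $2y^2+2yy''-(y')^2$ stays non-negative on the set $\{J\ge4E(0)\}$; I expect this propagation step to be the principal obstacle.

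It remains to push $y$ above $4E(0)$ with strictly positive slope and then conclude. By the previous step $y$ is convex with $y'(0)\ge0$. If $y'(0)>0$ then $y'\ge y'(0)>0$, and while $y<4E(0)$ one has $y''+y\ge 4E(0)$; a Sturm-type comparison with the solution of $\tilde y''+\tilde y=4E(0)$ with the same Cauchy data (their difference $\eta$ satisfies $\eta(0)=\eta'(0)=0$, $\eta''+\eta\ge0$, hence $\eta\ge0$ on $[0,\pi]$ by the Duhamel formula) shows $y$ attains the value $4E(0)$ at some $t_0<\tfrac\pi2$ with $y'(t_0)>0$, and stays $\ge4E(0)$ afterwards. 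If $y'(0)=0$ then $P(0)=0$, so (iii) gives $y(0)\ge4E(0)$; moreover $y'\equiv0$ would force $K\equiv0$, i.e.\ $(u,v)$ a stationary solution, for which the energy identity together with $y(0)\ge4E(0)$ forces $\nabla(u_0,v_0)\equiv0$ and hence $(u_0,v_0)=0$ — impossible by (i); so $y'$ becomes positive at some time and we reduce to the previous case. Finally, for $t\ge t_0$, where $y\ge4E(0)$,
\[
y\,y''-\tfrac32(y')^2\ \ge\ y\bigl(6K+2y-8E(0)\bigr)-6yK\ =\ 2y\bigl(y-4E(0)\bigr)\ \ge\ 0 ,
\]
so $y^{-1/2}$ is positive, concave, and strictly decreasing at $t_0$; it must therefore vanish in finite time, i.e.\ by Lemma \ref{lem:levine:1} (with $\psi=y$ and exponent $\tfrac12$) the mass $y$ becomes infinite in finite time, which is the asserted blow-up.
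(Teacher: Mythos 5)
Your argument is correct, and it reorganizes the same two basic ingredients (the second-derivative identity for $y$ and conservation of energy) along a genuinely different route. The paper's proof pivots on the Nehari functional $K[u,v]$ of \eqref{eq:3000}: it first proves $K[u(t),v(t)]<0$ for all $t$ by a first-crossing-time contradiction that exploits the separate monotonicity of $y$ and $P$ (Lemmas \ref{lem:skg} and \ref{lem:skg:1}), then upgrades this to $K\le -2\|(u_t,v_t)\|^2_{L^2\times L^2}$ for $t\ge t_b$, which gives $y''\ge 6\|(u_t,v_t)\|^2_{L^2\times L^2}$ and hence $yy''-\tfrac32(y')^2\ge0$ by Cauchy--Schwarz. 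You never invoke the Nehari functional: you start from the unconditional inequality $y''\ge 6\|(u_t,v_t)\|^2_{L^2\times L^2}+2y-8E(0)$ (which is \eqref{eq:400} with $\gamma=1$, used in the paper only for Theorem \ref{thm:neg:int}) and propagate the single scalar invariant $J=y+2P\ge 4E(0)$ via a Gronwall-type differential inequality for $J$; the final Levine inequality then comes from $y\ge 4E(0)$ rather than from a sign on $K$. Your version is arguably cleaner in that one monotone quantity replaces two and it makes transparent why the constants $\tfrac14,\tfrac12$ in hypothesis (iii) are exactly right; the paper's version has the advantage that the invariance of the region $\{K<0\}$ is the same structure reused in the Payne--Sattinger argument of Section \ref{sect:proof:thm3}.

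The one step you should tighten is the continuation argument for $J\ge 4E(0)$: since you only obtain $J'\ge0$ (not $J'>0$) on the set $\{J\ge 4E(0)\}$, you must exclude the possibility that $J$ touches the level $4E(0)$ at an interior time $t_0$ with $y'(t_0)=0$ and then dips below. This does close: at such a $t_0$ one has $P(t_0)=0$, hence $y(t_0)=4E(0)$, and your inequality gives $y''(t_0)\ge 6\|(u_t,v_t)(t_0)\|^2_{L^2\times L^2}+2\|\nabla(u,v)(t_0)\|^2_{L^2\times L^2}\ge 0$; if both terms vanished, $(u(t_0),v(t_0))$ would be an $H^1\times H^1$ pair with zero gradient, hence zero, contradicting $y(t_0)=4E(0)>0$. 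So $y''(t_0)>0$, $y$ strictly exceeds $4E(0)$ immediately after $t_0$, and $J\ge y>4E(0)$ there. This is the same kind of boundary care the paper exercises in Lemma \ref{lem:skg:1}. Finally, the Sturm comparison is superfluous: once $y''\ge0$ and $y'\ge y'(0)>0$, convexity alone pushes $y$ past $4E(0)$ with positive slope in finite time.
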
	
{
\begin{remark}
	The projection functional $P$ is indeed a two dimensional analogue of the scalar version in \cite{kutv2016}, where Kutev, Kolkovska and Dimova proved the same result for Klein-Gordon equation with more general nonlinearities. One can also find the motivation for introducing the scalar version of the functional $P(t)$ in \cite{kutv2016}.
\end{remark}
\begin{remark}
	The advantage of Theorem \ref{thm:skg:int} is that the initial energy can be arbitrarily large, in particular can be larger than that of ground states. Indeed, there have been results of blowing up for solutions with energy above the ground states, see \cite{nakanishischlag2011jde,nakanishischlag2012}. What's more, in these two works, Nakanishi and Schlag gave the qualitative classification of solutions with energy slightly above the ground states.
	
	In our case, datum with arbitrarily large energy, that leads to blowing up solution, is constructed in Section \ref{sec:diff}.
\end{remark}

As mentioned at the end of Subsection \ref{subsec:1.1}, the key point in the proof of Theorem \ref{thm:skg:int} is to establish a differential inequality as \eqref{eq:5002}. We also argue by contradiction, assuming the solution does not blow up in finite time. In order to derive a contradiction, we carry out the following strategy.
\begin{enumerate}
	\item We first show that $K(t)<0$ for all $t\geq0$ (see Lemma \ref{lem:skg:1}). This is proved by a contradiction argument. By the assumption we get $K(0)<0$. Since $K(t)$ is continuous in $t$ (see Proposition \ref{prop:cauchy}), we can assume $t_0>0$ is the first time at which $K$ vanishes. By Lemma \ref{lem:skg}, $y(t)$ and $\frac{\mathrm{d}}{\mathrm{d}t}y(t)$ are strictly increasing on $[0,t_0]$, and hence $P(t)$ is well defined and is strictly increasing on $[0,t_0]$. We can then rewrite $K(t_0)$ in terms of $E(t_0)$ (see \eqref{eq:equa:1}) to obtain by using the energy conservation
	\begin{equation}\label{eq:0726}
		\frac{1}{4}K(t_0)\leq \frac{1}{4}\left(y(0)-y(t_0)\right)+\frac{1}{2}\left(P(0)-P(t_0)\right).
	\end{equation}
	This is strictly negative and hence $0=\frac{1}{4}K(t_0)<0$, a contradiction.
	\item Using fundamental theorem of calculus and the monotonicity of $\frac{\mathrm{d}}{\mathrm{d}t}y(t)$, we can show $$2P(0)+y(0)-y(t)\leq \frac{1}{2}\left(\frac{\mathrm{d}}{\mathrm{d}t}y(0)\right)^2-t\frac{\mathrm{d}}{\mathrm{d}t}y(0)$$ which is negative for all $t>t_b=\frac{\frac{\mathrm{d}}{\mathrm{d}t}y(0)}{2}$ (see Lemma \ref{lem:skg}).
	\item By rewriting $K(t)$ in terms of $E(t)$ and using the conservation of energy once again (see \eqref{eq:equa:1}), we get
	\begin{equation}
		K(t)\leq 2P(0) +y(0)-y(t)-2\left\|(u_t(t),v_t(t))\right\|^2_{L^2\times L^2},\ \ \forall t\geq 0.
	\end{equation}
	Combining this with \eqref{eq:0726}, we obtain
	\begin{equation}\label{eq:07260}
		K(t)\leq -2\left\|(u_t(t),v_t(t))\right\|^2_{L^2\times L^2},\ \ \forall t\geq t_b.
	\end{equation}
	\item Substituting \eqref{eq:07260} into \eqref{eq:diff:1}, we get
	\begin{equation}\label{eq:07261}
			\frac{\mathrm{d}^2}{\mathrm{d}t^2}y(t)\geq 2\|(u_t,v_t)\|_{L^2\times L^2}^2+4\|(u_t,v_t)\|_{L^2\times L^2}^2=6\|(u_t,v_t)\|_{L^2\times L^2}^2\ \ \forall t\geq t_b.
	\end{equation}
	It then follows from Cauchy-Schwarz inequality that
	\begin{equation}
			y(t) \frac{\mathrm{d}^2}{\mathrm{d}t^2}y(t)-\frac{3}{2}\left(\frac{\mathrm{d}}{\mathrm{d}t}y(t)\right)^2 \geq 0.
	\end{equation}
We can use Levine's concavity argument (Lemma \ref{lem:levine:1}) to see that the solution blows up in finite time.
\end{enumerate}
}

\subsection{Criterion in the spirit of Payne and Sattinger}
{Our third criterion will be given with the help of the stationary solutions $\left(\phi,\psi\right)$ to \eqref{eq:skg:int}, that is $\left(\phi,\psi\right)$ solves the equation}
\begin{equation}\label{eq:SKG:sta:int}
\left\{
\begin{split}
-\Delta \phi+\phi&=&\phi^3+\beta\psi^2\phi,\\
-\Delta \psi+\psi&=&\psi^3+\beta\phi^2\psi.
\end{split}
\right.
\end{equation}
This equation is indeed the Euler-Lagrangian equation of the functional
\begin{equation}\label{eq:J:int}
J[\phi,\psi]:=\frac{1}{2}\int_{\mathbb{R}^3}\left[|\nabla\phi|^2+|\phi|^2+|\nabla\psi|^2+|\psi|^2\right]\mathrm{d}x -\frac{1}{4}\int_{\mathbb{R}^3}\left[|\phi|^4+|\psi|^4+2\beta\phi^2\psi^2\right]\mathrm{d}x.
\end{equation}
By \textbf{Lemma \ref{lem:mount:pass}}, the functional $J[\phi,\psi]$ enjoys the mountain pass geometry. Consequently for each $\left(\phi,\psi\right)\in H^1\times H^1\backslash\{(0,0)\}$,
$
J[\lambda\phi,\lambda\psi]
$
attains its positive maximum at $\lambda=\lambda_\ast$ for some number $\lambda_{\ast}>0$. This suggests us to renormalize $\left(\phi,\psi\right)$ so that $\lambda_{\ast}=1$ and hence we have
\begin{equation*}
\frac{\mathrm{d}}{\mathrm{d}\lambda} \left[J[\lambda\phi,\lambda\psi]\right]\big|_{\lambda=\lambda_{\ast}}=0.
\end{equation*}
The left hand side will be denoted by $K[\phi,\psi]$ and it has the expression
\begin{equation}\label{eq:3000}
	K[\phi,\psi]=\int_{\mathbb{R}^3}\left[|\nabla\phi|^2+|\phi|^2+|\nabla\psi|^2+|\psi|^2\right]\mathrm{d}x -\int_{\mathbb{R}^3}\left[|\phi|^4+|\psi|^4+2\beta\phi^2\psi^2\right]\mathrm{d}x.
\end{equation}
If restricting $\beta\geq 0$, then it follows from {\textbf{Proposition 3.5} in \cite{sirakov07}} that \eqref{eq:SKG:sta:int} has a non-trivial  solution $\left(\tilde{\phi},\tilde{\psi}\right)$, each component of which is non-negative, spherically symmetric and non-increasing in radial direction. Consequently the lowest mountain pass level defined by
\begin{equation*}
d:=\min\left\{J[\phi,\psi]\ |\ K[\phi,\psi]=0,(\phi,\psi)\neq(0,0)\right\}=J[\tilde\phi,\tilde\psi]
\end{equation*}
equaling to
\begin{equation*}
\frac{1}{4}\int_{\mathbb{R}^3}|\tilde \phi|^4+|\tilde \psi|^4+2\beta\tilde{\phi}^2\tilde{\psi}^2\mathrm{d}x,
\end{equation*}
{is strictly positive}.
We use $d$ to define the following two subsets of initial datum:
\begin{equation*}
\mathcal{W}:=\left\{ \left((u_0,u_1),(v_0,v_1)\right)\in\mathcal{H}\times\mathcal{H}\big| E[u_0,u_1;v_0,v_1]<d, K[u_0,v_0]\geq0 \right\},
\end{equation*}
and
\begin{equation*}
\mathcal{E}:=\left\{ \left((u_0,u_1),(v_0,v_1)\right)\in\mathcal{H}\times\mathcal{H}\big| E[u_0,u_1;v_0,v_1]<d, K[u_0,v_0]<0 \right\},
\end{equation*}
where $\mathcal{H}:=H^1\times L^2$.

We are now in a position to state {our third criterion.}
\begin{theorem}\label{thm:payne-sattinger:int}Fix $\beta\in [0,\infty)$. Let $\left((u_0,u_1),(v_0,v_1)\right)\in \mathcal{H}\times \mathcal{H}$ and $(u(t),v(t))$ the corresponding local solution to \eqref{eq:skg:int}. Then we have the following dichotomy
	\begin{enumerate}[(i)]
		\item if $\left((u_0,u_1),(v_0,v_1)\right)\in\mathcal{W}$, $(u(t),v(t))$ can be extended to a global solution;
		\item if $\left((u_0,u_1),(v_0,v_1)\right)\in\mathcal{E}$, $(u(t),v(t))$ is going to blow up in finite time.
	\end{enumerate}
\end{theorem}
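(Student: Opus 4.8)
The plan is to run the Payne--Sattinger potential-well method, so that the whole theorem reduces to showing that the sets $\mathcal{W}$ and $\mathcal{E}$ are invariant along the flow of \eqref{eq:skg:int}. I will first record three preliminary facts. Writing the conserved energy as $E(t)=\tfrac12\|(u_t,v_t)\|_{L^2\times L^2}^2+J[u(t),v(t)]$ and using the splitting $J[\phi,\psi]=\tfrac14 K[\phi,\psi]+\tfrac14\|(\phi,\psi)\|_{H^1\times H^1}^2$ (immediate from \eqref{eq:J:int} and \eqref{eq:3000}), one sees that on $\{K\ge0\}$ the energy controls the $H^1$--norm: $E\ge\tfrac14\|(\phi,\psi)\|_{H^1\times H^1}^2$. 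Next, since $\beta\ge0$, the embedding $H^1(\mathbb{R}^3)\hookrightarrow L^4(\mathbb{R}^3)$ gives a constant $C>0$ with $\int_{\mathbb{R}^3}(\phi^4+\psi^4+2\beta\phi^2\psi^2)\,\mathrm{d}x\le C\|(\phi,\psi)\|_{H^1\times H^1}^4$, so $K[\phi,\psi]\ge\|(\phi,\psi)\|_{H^1\times H^1}^2\bigl(1-C\|(\phi,\psi)\|_{H^1\times H^1}^2\bigr)$ and the region $\{K<0\}$ is separated from the origin, $\|(\phi,\psi)\|_{H^1\times H^1}^2\ge1/C$ there. Finally, the ``depth of the well'': if $K[\phi,\psi]<0$, then scaling by the unique $\lambda_\ast\in(0,1)$ with $K[\lambda_\ast\phi,\lambda_\ast\psi]=0$ and using $J[\lambda_\ast\phi,\lambda_\ast\psi]\ge d$ (the definition of $d$) yields $\|(\phi,\psi)\|_{H^1\times H^1}^2>4d$; here $d>0$ comes from the mountain-pass geometry of Lemma \ref{lem:mount:pass}.

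Next I would prove invariance by a continuity/first-crossing argument. Along the solution, $t\mapsto K[u(t),v(t)]$ and $t\mapsto J[u(t),v(t)]$ are continuous (by the $H^1\times H^1$-continuity of the local flow of Section \ref{sec:pre}) and $E(t)\equiv E(0)<d$. Assume the data lies in $\mathcal{W}$ but $K[u(t),v(t)]<0$ at some time of the maximal interval; let $t_1$ be the first such time, so $K[u(t_1),v(t_1)]=0$ by continuity. If $(u(t_1),v(t_1))\neq(0,0)$, then $J[u(t_1),v(t_1)]\ge d$, contradicting $J[u(t_1),v(t_1)]\le E(t_1)=E(0)<d$; if $(u(t_1),v(t_1))=(0,0)$, then $\|(u,v)\|_{H^1\times H^1}^2<1/C$ near $t_1$, which forces $K\ge0$ there and contradicts the choice of $t_1$. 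Hence $K\ge0$ throughout and $\mathcal{W}$ is forward-invariant; the same argument --- now using $\|(\phi,\psi)\|_{H^1\times H^1}^2\ge1/C$ on $\{K<0\}\setminus\{0\}$ to guarantee the first crossing point is automatically nonzero --- shows $\mathcal{E}$ is forward-invariant. Part (i) follows at once: on $\{K\ge0\}$ the splitting gives $\|(u_t,v_t)\|_{L^2\times L^2}^2\le2E(0)$ and $\|(u,v)\|_{H^1\times H^1}^2\le4E(0)$, so the solution remains bounded in the energy space and, by the blow-up alternative of the local theory of Section \ref{sec:pre}, extends to a global solution.

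For part (ii) I would invoke Levine's concavity argument with the choice $\psi=y(t)=\|(u(t),v(t))\|_{L^2\times L^2}^2$. Differentiating $y$ twice and using the equations (as around \eqref{eq:5001}) gives $\ddot y(t)=2\|(u_t,v_t)\|_{L^2\times L^2}^2-2K[u(t),v(t)]$, and eliminating $K$ through the energy identity $K[u(t),v(t)]=4E(0)-2\|(u_t,v_t)\|_{L^2\times L^2}^2-\|(u,v)\|_{H^1\times H^1}^2$ yields
\begin{equation*}
\ddot y(t)=6\|(u_t,v_t)\|_{L^2\times L^2}^2+2\|(u,v)\|_{H^1\times H^1}^2-8E(0)\ \ge\ 6\|(u_t,v_t)\|_{L^2\times L^2}^2+8\bigl(d-E(0)\bigr),
\end{equation*}
where the inequality uses invariance ($K<0$, hence $\|(u,v)\|_{H^1\times H^1}^2>4d$). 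In particular $\ddot y\ge8(d-E(0))>0$, so $\dot y$ grows without bound and there is $t_b\ge0$ with $\dot y(t_b)>0$; also $K<0$ forces $\|(u_0,v_0)\|_{L^2\times L^2}>0$, so by invariance $y(t)>0$ for all $t$. Combining the displayed bound with Cauchy--Schwarz, $(\dot y)^2\le4y\|(u_t,v_t)\|_{L^2\times L^2}^2$, gives $y\ddot y-\tfrac32(\dot y)^2\ge y\bigl(2\|(u,v)\|_{H^1\times H^1}^2-8E(0)\bigr)\ge0$, i.e. $(y^{-1/2})''\le0$; since $y^{-1/2}$ is positive, concave, and strictly decreasing at $t_b$, it vanishes at a finite time. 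This is precisely the situation of Lemma \ref{lem:levine:1}, and it forces the mass $y$, hence the solution $(u,v)$, to blow up in finite time.

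I expect the main obstacle to be the invariance step. The first-crossing argument must be made airtight, and in particular one has to exclude the solution passing through the origin; this is exactly where the Sobolev separation of $\{K<0\}$ from $0$, and with it the hypothesis $\beta\ge0$, is essential, and it also requires that the local theory of Section \ref{sec:pre} be available as a genuine blow-up alternative in the energy norm. The positivity $d>0$ together with the well-depth bound $\|(\phi,\psi)\|_{H^1\times H^1}^2>4d$ on $\{K<0\}$ --- which rest on Lemma \ref{lem:mount:pass} and the construction in \cite{sirakov07} of a nonnegative, radial ground state --- is the other quantitative input; once these are secured, the energy bound in part (i) and the concavity computation in part (ii) are routine.
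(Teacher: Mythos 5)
Your proof is correct, and while it follows the same Payne--Sattinger skeleton as the paper (invariance of the sets, energy trapping for $\mathcal{W}$, Levine's concavity via Lemma \ref{lem:levine:1} with $\gamma=\tfrac32$ for $\mathcal{E}$), it differs from the paper's argument in the two technical places that carry most of the weight. First, where you need the lower bound $\|(\phi,\psi)\|_{H^1\times H^1}^2>4d$ on $\{K<0\}$, you obtain it by the elementary rescaling $\lambda_0\in(0,1)$ onto the Nehari manifold (Lemma \ref{lem:K:mountainpass}) together with the definition of $d$ as the Nehari minimum; the paper instead proves the stronger attainment statement Theorem \ref{thm:char:d:int} (Lemma \ref{lem:keyvariation}, via Schwarz rearrangement and radial compactness) and feeds it through Corollary \ref{lem:tech} into the invariance proof (Proposition \ref{prop:inv:ext}, where the possibility of passing through the origin is excluded by Claim \ref{claim:4} rather than by your Sobolev separation of $\{K<0\}$ from zero). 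Only the inequality $E^1\geq d$ on $\mathcal{N}'$ is actually needed for the theorem, so your shortcut is legitimate; what you lose is the variational attainment result, which the paper states as a theorem of independent interest. Second, in part (ii) your uniform bound $\ddot y\geq 6\|(u_t,v_t)\|^2_{L^2\times L^2}+8(d-E(0))$ with $8(d-E(0))>0$ gives $\dot y\to+\infty$ immediately and thereby bypasses the paper's Claims \ref{claim:5} and \ref{claim:6} (a contradiction argument needed because the paper works only with the qualitative statement $\ddot y>0$); this is a genuine streamlining. Both routes rest on the same external input, namely Sirakov's existence of a nonnegative radial ground state giving $d>0$ and its Nehari characterization. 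One small quibble: in your closing remarks you locate the role of $\beta\geq0$ in the Sobolev separation of $\{K<0\}$ from the origin, but that estimate works for any $\beta$ (with a constant depending on $|\beta|$, since $2|\beta|\phi^2\psi^2\leq|\beta|(\phi^4+\psi^4)$); the hypothesis $\beta\geq0$ is really used for the existence and positivity of $d$ via \cite{sirakov07}, exactly as Remark \ref{rmk:0108} indicates.
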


\begin{remark}
	It is easy to extend results in $\mathbf{Theorem}$ \ref{thm:payne-sattinger:int} to the following general system
	\begin{equation}\label{eq:skg:int0}
	\left\{
	\begin{split}
	u_{tt}-\Delta u+ u&=\mu_1u^3+\beta v^2u, \mbox { in } \mathbb{R}\times\mathbb{R}^3,\\
	v_{tt}-\Delta v+ v&=\mu_2v^3+\beta u^2v, \mbox { in } \mathbb{R}\times\mathbb{R}^3,
	\end{split}
	\right.
	\end{equation}	
	where $\mu_1,\mu_2>0$ and $\beta\geq0$. Since the corresponding system of elliptic equations (see \cite[Proposition 3.5]{sirakov07}) has the lowest mountain pass level $d$ being positive and possesses a constrained characterization of $d$, one has no difficulty in going through our proof for \eqref{eq:skg:int0}.
	
	As far as the authors know, the more general systems of elliptic equations like \eqref{eq:SKG:sta:int} have been studied in \cite{chenzou2012,chenzou2015,pengwangwang2019,soavetavares2016,tavares2011} and \cite{weiwu2020}, where nontrivial solutions are proven to exist. Thus as long as one can show $d$ has a constrained characterization, we might expect to extend the results in $\mathbf{Theorem}$ \ref{thm:payne-sattinger:int} to the corresponding system of Klein-Gordon equations, by going through our argument in $\mathrm{Section}$ \ref{sect:proof:thm3}.
\end{remark}
\begin{remark}
	The result in {$\mathbf{Theorem}$} \ref{thm:payne-sattinger:int} also holds for a modification of \eqref{eq:skg:int} by adding the global damping, see \cite{wu2012}. See also \cite{xu2010} for results about the scalar equation with damping term.
\end{remark}
\begin{remark}
	By combining Payne-Sattinger's potential well argument with Kenig-Merle's compactness-rigidity argument, Xu \cite{xu2013} classified the solutions to some coupled Schr\"{o}dinger system with energy below the ground states.
\end{remark}
{
In order to prove \textbf{Theorem} \ref{thm:payne-sattinger:int}, we shall give a new characterization of $d$.  We define the functional
\begin{equation*}
	E^1[u_1,u_2]:= \frac{1}{4}\left(\|u_1\|_{H^1}^2+\|u_2\|_{H^1}^2\right)
\end{equation*}
on $H^1\times H^1$ and denote
\begin{equation}\label{Np:intr}
	\mathcal{N}':=\left\{(u_1,u_2)\in H^1\times H^1|
	(u_1,u_2)\not= (0,0),
	K[u_1,u_2]\leq 0   \right\}.
\end{equation}
\begin{theorem}\label{thm:char:d:int}
	Fix $\beta\geq 0$. Then the quantity $\mathop{\inf}\limits_{(u_1,u_2)\in \mathcal{N}'} E^1[u_1,u_2]$ is attained by some element in $\mathcal{N}'$ and it equals $d$.
\end{theorem}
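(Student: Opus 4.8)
The plan is to reduce the statement to a short consequence of a single algebraic identity relating the three functionals. From the definitions \eqref{eq:J:int} and \eqref{eq:3000}, and since $E^1[\phi,\psi]=\tfrac14\left(\|\phi\|_{H^1}^2+\|\psi\|_{H^1}^2\right)$ is exactly $\tfrac14$ of the quadratic ($H^1$) part of both $J$ and $K$, one has for every $(\phi,\psi)\in H^1\times H^1$
\[
J[\phi,\psi]=E^1[\phi,\psi]+\tfrac14 K[\phi,\psi].
\]
In particular, on the set $\mathcal{M}:=\{(\phi,\psi)\in H^1\times H^1:(\phi,\psi)\neq(0,0),\ K[\phi,\psi]=0\}$ we have $J=E^1$. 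Combining this with the fact, recalled above (a consequence of \textbf{Proposition 3.5} in \cite{sirakov07} together with the constrained characterization of $d$), that $d=J[\tilde\phi,\tilde\psi]$ with $K[\tilde\phi,\tilde\psi]=0$, I get $E^1[\tilde\phi,\tilde\psi]=d$ and, since $K[\tilde\phi,\tilde\psi]=0\le 0$, that $(\tilde\phi,\tilde\psi)\in\mathcal{N}'$. Hence it suffices to show $\inf_{(u_1,u_2)\in\mathcal{N}'}E^1[u_1,u_2]=d$; the infimum is then automatically attained at $(\tilde\phi,\tilde\psi)$.

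For the equality I would prove the two inequalities separately. The bound $\inf_{\mathcal{N}'}E^1\le d$ is immediate, because $\mathcal{M}\subseteq\mathcal{N}'$ and $\inf_{\mathcal{M}}E^1=\inf_{\mathcal{M}}J=d$. For the reverse bound I would use a sub-unit rescaling trick. Given $(u_1,u_2)\in\mathcal{N}'$, set $a:=\|u_1\|_{H^1}^2+\|u_2\|_{H^1}^2>0$ and $N:=\int_{\mathbb{R}^3}\left(|u_1|^4+|u_2|^4+2\beta u_1^2u_2^2\right)\mathrm{d}x$, so that $K[u_1,u_2]=a-N\le 0$. If $K[u_1,u_2]=0$, then $E^1[u_1,u_2]=J[u_1,u_2]\ge d$ directly. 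If $K[u_1,u_2]<0$, then $N>a>0$; since $K[\lambda u_1,\lambda u_2]=\lambda^2 a-\lambda^4 N$, the choice $\lambda_\ast:=(a/N)^{1/2}\in(0,1)$ yields $(\lambda_\ast u_1,\lambda_\ast u_2)\in\mathcal{M}$, hence $E^1[\lambda_\ast u_1,\lambda_\ast u_2]\ge d$. On the other hand $E^1$ is homogeneous of degree two, so $E^1[\lambda_\ast u_1,\lambda_\ast u_2]=\lambda_\ast^2 E^1[u_1,u_2]<E^1[u_1,u_2]$, giving $E^1[u_1,u_2]>d$. Taking the infimum over $\mathcal{N}'$ gives $\inf_{\mathcal{N}'}E^1\ge d$, and combining the two inequalities finishes the argument.

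As to where the content lies: there is essentially no analytic obstacle once the identity $J=E^1+\tfrac14 K$ is noticed — the theorem is a bookkeeping consequence of it, of the degree-two homogeneity of $E^1$, and of the standard ``pull back to the Nehari set'' observation that shrinking by $\lambda_\ast<1$ strictly decreases $E^1$. The only points needing a word of care are: (i) that the rescaling is legitimate, i.e. that $N>0$ whenever $K<0$ and $(u_1,u_2)\neq(0,0)$, which is automatic from $N=a-K>a>0$; and (ii) that the hypothesis $\beta\ge0$ is invoked at exactly one place, namely to ensure via \cite{sirakov07} that $d>0$ and that $d$ is attained, so that the infimum over $\mathcal{N}'$ is realized by an honest stationary pair rather than merely being an infimum.
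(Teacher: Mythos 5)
Your proof is correct, but it takes a genuinely different and considerably shorter route than the paper's. You take as input the fact, asserted in the paper via \cite{sirakov07}, that $d=\min\left\{J[\phi,\psi]\,:\,K[\phi,\psi]=0,\ (\phi,\psi)\neq(0,0)\right\}$ is \emph{attained} at the ground state $\left(\tilde\phi,\tilde\psi\right)$; combined with the identity $J=E^1+\tfrac14K$ this immediately hands you an element of $\mathcal{N}'$ with $E^1=d$, and the rescaling $\lambda_\ast=(a/N)^{1/2}\in(0,1)$, applied to an \emph{arbitrary} element of $\mathcal{N}'$ with $K<0$, gives $E^1\geq d$ pointwise on all of $\mathcal{N}'$. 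The paper instead proves attainment of $\inf_{\mathcal{N}'}E^1$ from scratch: it takes a minimizing sequence, Schwarz-symmetrizes it (this is where $\beta\geq0$ enters its argument, to guarantee that the cross term $2\beta\int u_1^2u_2^2\,\mathrm{d}x$ does not decrease under rearrangement), uses the compact embedding $H^1_r\hookrightarrow L^4$ to extract a limit, verifies that the limit lies in $\mathcal{N}'$, and only then applies the same homogeneity/rescaling trick to that single limit point to conclude $d_1=d$. The trade-off is clear: your argument eliminates the compactness machinery entirely and localizes the dependence on $\beta\geq0$ to the single external existence statement, whereas the paper's argument only ever uses $d$ as an infimum and therefore does not presuppose that Sirakov's solution is a least-energy one. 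That last point is the one caveat worth recording: the attainment half of your proof is exactly as strong as the assertion $d=J[\tilde\phi,\tilde\psi]$. If one only granted that $(\tilde\phi,\tilde\psi)$ is some nontrivial critical point (so that possibly $J[\tilde\phi,\tilde\psi]>d$), your two inequalities would still yield $\inf_{\mathcal{N}'}E^1=d$, but the infimum would no longer be visibly attained. Since the paper states the attainment of $d$ on the Nehari manifold as part of its setup, your proof is complete within the paper's framework.
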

\begin{remark}
	$\mathrm{\mathbf{Theorem}}$\ref{thm:char:d:int} is indeed a generalization of \textbf{Theorem 3.5} in \cite{sirakov07}, where $\mathcal{N}'$ is replaced by the Nehari manifold. It is as well a generalization of the scalar version{, see \cite{nakanishischlag2011}}.
\end{remark}
\begin{remark}\label{rmk:0108}
	As is mentioned in $\mathbf{Remark}$ \ref{rmk:01080}, we can have results in $\mathbf{Theorem}$ \ref{thm:payne-sattinger:int} only for $\beta\in[0,\infty)$. This is because we prove this theorem by using $\mathbf{Theorem}$ \ref{thm:char:d:int}, which is stated only for $\beta\geq 0$. In fact, we can not lower down the value of $\beta$ strictly below zero by only using our argument. Therefore, it is curious for us whether or not the result in $\mathbf{Theorem}$ \ref{thm:char:d:int} holds for some $\beta<0$.
\end{remark}
An immediate corollary of Theorem \ref{thm:char:d:int}, is the following minimal property of $d$.
\begin{corollary}\label{lem:tech}Fix $\beta\geq0$.
	Let $\{\left(u_n,v_n\right)\}_{n\geq 1}$ be a sequence in $H^1\times H^1$ satisfy
	\begin{enumerate}[(i)]
		\item $K[u_n,v_n]<0$;
		\item $K[u_n,v_n]$ tends to $0$ as $n$ tends to infinity.
	\end{enumerate}
	Then if $\left\|\left(u_n,v_n\right)\right\|_{H^1\times H^1}\neq 0$ for each $n$, we have
	\begin{equation}\label{eq:1600}
		\liminf_{n\rightarrow\infty} J[u_n,v_n]\geq d.
	\end{equation}
\end{corollary}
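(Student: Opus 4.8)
The plan is to reduce the statement to a single algebraic identity relating $J$, $E^1$ and $K$, and then to quote Theorem \ref{thm:char:d:int}. First I would record the pointwise identity
\begin{equation*}
	J[u,v] = E^1[u,v] + \tfrac14 K[u,v], \qquad (u,v)\in H^1\times H^1,
\end{equation*}
which follows by substituting $\int_{\mathbb{R}^3}\bigl(|u|^4+|v|^4+2\beta u^2v^2\bigr)\,\mathrm{d}x = \bigl(\|u\|_{H^1}^2+\|v\|_{H^1}^2\bigr) - K[u,v]$, read off from \eqref{eq:3000}, into the definition \eqref{eq:J:int} of $J$, and using $E^1[u,v]=\tfrac14\bigl(\|u\|_{H^1}^2+\|v\|_{H^1}^2\bigr)$.

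Next, for each $n$ I would observe that $(u_n,v_n)$ belongs to the set $\mathcal{N}'$ defined in \eqref{Np:intr}: by hypothesis $\|(u_n,v_n)\|_{H^1\times H^1}\neq 0$, so $(u_n,v_n)\neq(0,0)$, and assumption (i) gives $K[u_n,v_n]<0\leq 0$. Hence Theorem \ref{thm:char:d:int} yields $E^1[u_n,v_n]\geq \inf_{(w_1,w_2)\in\mathcal{N}'} E^1[w_1,w_2] = d$ for every $n$.

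Finally I would combine the two: from the identity, $J[u_n,v_n] = E^1[u_n,v_n] + \tfrac14 K[u_n,v_n] \geq d + \tfrac14 K[u_n,v_n]$. Since $K[u_n,v_n]\to 0$ by assumption (ii), passing to the $\liminf$ gives $\liminf_{n\to\infty} J[u_n,v_n]\geq d + \tfrac14\lim_{n\to\infty}K[u_n,v_n] = d$, which is \eqref{eq:1600}.

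As for the main obstacle: there is essentially none, since all the substantive work has been placed in Theorem \ref{thm:char:d:int}; the only points that call for a line of care are the verification that $(u_n,v_n)\in\mathcal{N}'$ (this is precisely why the nonvanishing hypothesis is imposed in the statement) and the elementary fact that the $\liminf$ of a sum with a convergent sequence splits. I would also note in passing that the strict inequality $K[u_n,v_n]<0$ in (i) is not needed for the estimate itself---membership in $\mathcal{N}'$ only requires $K\leq 0$---but it is the form in which the corollary will be invoked in Section \ref{sect:proof:thm3}.
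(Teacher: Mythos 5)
Your proposal is correct and follows essentially the same route as the paper: both rest on the identity $J[u_n,v_n]=E^1[u_n,v_n]+\tfrac14 K[u_n,v_n]$, membership of $(u_n,v_n)$ in $\mathcal{N}'$, and the characterization $\inf_{\mathcal{N}'}E^1=d$ from Theorem \ref{thm:char:d:int}. The only (immaterial) difference is that you apply the infimum bound termwise before passing to the $\liminf$, whereas the paper first takes the $\liminf$ of the identity and then invokes the theorem.
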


\begin{proof}[Proof of Corollary \ref{lem:tech}]
	By definitions of $K$ and $J$, we have for each $n$,
	\begin{equation}\label{eq:2505}
		J[u_n,v_n]=\frac{1}{4}K[u_n,v_n]+E^1[u_n,v_n].
	\end{equation}
	Sending $n$ to infinity and using the assumption $K[u_n,v_n]\rightarrow_{n\rightarrow\infty}0$, we get
	\begin{equation}\label{eq:4000}
		\liminf_{n\rightarrow\infty}J[u_n,v_n]=\lim\inf_{n\rightarrow\infty}E^1[u_n,v_n].
	\end{equation}
	Since $\left\|\left(u_n,v_n\right)\right\|_{H^1\times H^1}\neq 0$ and $K[u_n,v_n]<0$ for each $n$, we have $(u_n,v_n)\in\mathcal{N}'$. Consequently by Theorem \ref{thm:char:d:int}, we have
	\begin{equation}\label{eq:4001}
		\lim\inf_{n\rightarrow\infty}E^1[u_n,v_n]\geq d.
	\end{equation}
	Combining the inequality above with \eqref{eq:4000}, we get \eqref{eq:1600}. This completes the proof.
\end{proof}

We now sketch the proof of \textbf{Theorem} \ref{thm:payne-sattinger:int}. As in \cite{paynesattinger}, we argue by contradiction, assuming that the solution exists for all time. Using Corollary \ref{lem:tech}, we can show $\mathcal{E}$ is invariant under the flow of \eqref{eq:skg:int}. It then follows that the differential expression
\begin{equation} \frac{\mathrm{d}^2}{\mathrm{d}t^2}y(t)=2\int_{\mathbb{R}^3}\left(|u_t|^2+|v_t|^2\right)\mathrm{d}x-2K[u,v](t)
\end{equation}
is strictly positive and hence $y(t)$ is convex on $[0,\infty)$.  By using Corollary \ref{lem:tech} once again combining with Claim \ref{claim:5}, we show that there exist a large number $t_{\ast}>0$ so that $y(t)>0$ for $t\geq t_{\ast}$. This together with the strict convexity of $y$ implies that $y(t)\rightarrow_{t\rightarrow\infty}+\infty$, in particular there exists a large number $t_{\ast\ast}>0$ so that
\begin{equation}\label{eq:7000}
	2y(t)-\max\left(0,8E(0)\right)>\delta,\ \ \forall t\geq t_{\ast\ast}.
\end{equation}
for some $\delta>0$. By rewriting the expression of $\frac{\mathrm{d}^2}{\mathrm{d}t^2}y(t)$ in terms of $E(t)$ and using the conservation of energy, we get
\begin{equation}\label{eq:7001}
	\frac{\mathrm{d}^2}{\mathrm{d}t^2}y(t)\geq \frac{3}{2}\frac{(\frac{\mathrm{d}}{\mathrm{d}t}y(t))^2}{y(t)}+2y(t)-8E(0),\ \ \forall t\geq t_{\ast\ast}.
\end{equation}
Substituting \eqref{eq:7000} into \eqref{eq:7001}, we get an inequality in analogue of \eqref{eq:5001}. Consequently we can use Levine's concavity argument to get the contradiction, which will close our argument.}

%{\color{blue}
%\begin{remark}
%	The result in this theorem is different from that in \cite{zhang2003}, where $\beta$ is required to be in $(-1,1)$. {\color{red}This %shall be careful! We need to make it clear the reason that leads to such a difference!}
%\end{remark}
%
%}

We end this introductory part by describing the organization of this article. In Section \ref{sec:pre}, after giving some basic notations, we will establish the local Cauchy theory for \eqref{eq:skg:int} in the energy space. In Section \ref{sec:proof:thm1}, we will use Levine's concavity argument (see Lemma \ref{lem:levine:1}) to prove Theorem \ref{thm:neg:int}. In Section \ref{sec:proof:thm2}, we will prove Theorem \ref{thm:skg:int}. In Section \ref{sect:proof:thm3}, we will first show that \eqref{eq:skg:int} has nontrivial stationary solutions, which we will use to define the lowest mountain pass level $d$; next we will give a new characterization of $d$; using this new characterization, we will prove Theorem \ref{thm:payne-sattinger:int}. In the last section, we will construct datum that satisfy each assumption in \textbf{Theorem} \ref{thm:neg:int}.
\section{Preliminaries}
\label{sec:pre}
We devote this section to specifying some basic notations, with which we are going to work, and some basic results, which are basis for our forthcoming analysis.

{Let $L^2$ be the space of square integrable real-valued functions defined on $\mathbb{R}^3$ and $H^1$ the subspace of $L^2$, consisting of elements whose gradients are also square integrable. Denote $\mathcal{H}:=H^1\times L^2$. We equip the inner product on the product space $\mathcal{H}\times \mathcal{H}$ by setting
\begin{equation*}
	\left\langle\left((u,v),(\tilde{u},\tilde{v})\right)\right\rangle_{\mathcal{H}\times\mathcal{H}}:=\langle u,\tilde{u}\rangle_{H^1\times H^1}+\langle v,\tilde{v}\rangle_{L^2\times L^2}.
\end{equation*}
Denote by $\|\cdot\|_{\mathcal{H}\times\mathcal{H}}$ the norm induced by this inner product.} More generally, for an Hilbert space $\mathfrak{h}$, we will denote by $\langle\cdot,\cdot\rangle_{\mathfrak{h}}$ the inner product on it.

\vspace*{2pt}
We first solve \eqref{eq:skg:int}.
\begin{proposition}\label{prop:cauchy}
	 Fix $\beta\in\mathbb{R}$. For $\left((u_0,u_1),(v_0,v_1)\right)\in \mathcal{H}\times \mathcal{H}$, there exists a maximal $T_{\max}>0$ or $T_{max}=+\infty$, so that \eqref{eq:skg:int} has a unique solution $\left(u(t),v(t)\right)$ on $[0,T_{max})$ satisfying
	 \begin{equation*}
	 	\left(\left(u(t),\partial_tu(t)\right),\left(v(t),\partial_tv(t)\right)\right)\in L^\infty([0,T_{\max});\mathcal{H}\times\mathcal{H}).
	 \end{equation*}
 Moreover, the `mass' functional $y(t)$ is differentiable of the first and the second order for almost every $t\in [0,T_{max})$.
\end{proposition}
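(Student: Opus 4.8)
The plan is to establish Proposition \ref{prop:cauchy} in two stages: first the local well-posedness in the energy space via a contraction mapping argument, and then the regularity of the mass functional $y(t)$ via an energy/duality argument.

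For the first stage, I would rewrite \eqref{eq:skg:int} in Duhamel form. Writing $\Lambda:=(1-\Delta)^{1/2}$, the pair $U(t)=(u(t),v(t))$ solves the integral equation
\begin{equation*}
U(t)=\cos(t\Lambda)U_0+\Lambda^{-1}\sin(t\Lambda)U_1+\int_0^t\Lambda^{-1}\sin\big((t-s)\Lambda\big)F(U(s))\,\mathrm{d}s,
\end{equation*}
where $U_0=(u_0,v_0)$, $U_1=(u_1,v_1)$ and $F(u,v)=(u^3+\beta v^2u,\ v^3+\beta u^2v)$. The key point is that $F:H^1\times H^1\to L^2\times L^2$ is locally Lipschitz: by the Sobolev embedding $H^1(\mathbb{R}^3)\hookrightarrow L^6(\mathbb{R}^3)$, a cubic term like $u^3$ lies in $L^2$ with $\|u^3\|_{L^2}=\|u\|_{L^6}^3\lesssim\|u\|_{H^1}^3$, and the difference estimate $\|u^3-\tilde u^3\|_{L^2}\lesssim(\|u\|_{H^1}^2+\|\tilde u\|_{H^1}^2)\|u-\tilde u\|_{H^1}$ follows from Hölder; the mixed term $\beta v^2u$ is handled identically. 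Since $\cos(t\Lambda)$ and $\Lambda^{-1}\sin(t\Lambda)$ are bounded on the relevant spaces (the latter gains one derivative, mapping $L^2\to H^1$), the map $U\mapsto$ (right-hand side) is a contraction on a ball of $C([0,T];\mathcal{H}\times\mathcal{H})$ for $T=T(\|U_0\|_{H^1\times H^1},\|U_1\|_{L^2\times L^2})$ small enough. This yields a unique local solution, and a standard continuation argument produces the maximal time $T_{\max}\in(0,\infty]$; the $L^\infty([0,T_{\max});\mathcal{H}\times\mathcal{H})$ membership is immediate since the solution is in fact continuous in time with values in $\mathcal{H}\times\mathcal{H}$ on every compact subinterval.

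For the second stage, I note $y(t)=\|u(t)\|_{L^2}^2+\|v(t)\|_{L^2}^2$. Since $(u,v)\in C([0,T_{\max});H^1\times H^1)$ and $(u_t,v_t)\in C([0,T_{\max});L^2\times L^2)$, the map $t\mapsto\|u(t)\|_{L^2}^2$ is $C^1$ with $\tfrac{\mathrm{d}}{\mathrm{d}t}\|u(t)\|_{L^2}^2=2\langle u(t),u_t(t)\rangle_{L^2}$ (and similarly for $v$), so $y$ is $C^1$ on $[0,T_{\max})$. For the second derivative one differentiates $\tfrac{1}{2}y'(t)=\langle u,u_t\rangle_{L^2}+\langle v,v_t\rangle_{L^2}$ and uses the equation: formally $\tfrac{\mathrm{d}}{\mathrm{d}t}\langle u,u_t\rangle_{L^2}=\|u_t\|_{L^2}^2+\langle u,u_{tt}\rangle_{L^2}=\|u_t\|_{L^2}^2+\langle u,\Delta u-u+u^3+\beta v^2u\rangle_{L^2}=\|u_t\|_{L^2}^2-\|\nabla u\|_{L^2}^2-\|u\|_{L^2}^2+\|u\|_{L^4}^4+\beta\int v^2u^2$, which is a continuous function of $t$; each term on the right is finite and continuous because $u\in C([0,T_{\max});H^1)$ and $H^1\hookrightarrow L^4$. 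The technical care here is that $u_{tt}$ is only an $H^{-1}$-valued object a priori, so one should justify the differentiation by pairing against the test function $u(t)\in H^1$ — this is legitimate because from the Duhamel formula $u_t\in C([0,T_{\max});L^2)$ and $u\in C([0,T_{\max});H^1)$, and the map $t\mapsto\langle u(t),u_t(t)\rangle_{L^2}$ is absolutely continuous with the stated derivative for a.e.\ $t$ (indeed for every $t$ after the regularity bootstrap). Hence $y$ is twice differentiable for a.e.\ $t\in[0,T_{\max})$, which is exactly the claim.

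The main obstacle is the rigorous justification of the second-order differentiation of $y$: one cannot naively apply the chain rule because $\partial_t^2 u$ does not live in $L^2$. The clean way to handle this is to work with the weak formulation — pair the equation against $u(t)$ to get an identity for $\tfrac{\mathrm{d}}{\mathrm{d}t}\langle u,u_t\rangle$ valid in the sense of distributions in $t$, then observe that the right-hand side of that identity is a continuous (hence locally integrable) function of $t$, so the derivative exists almost everywhere by the Lebesgue differentiation theorem. Alternatively one can approximate the data by smooth data, for which everything is classical, and pass to the limit using the well-posedness estimates; I would take the weak-formulation route as it is the most economical and gives the a.e.\ differentiability directly.
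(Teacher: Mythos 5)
Your proposal is correct and follows essentially the same route as the paper: Duhamel formula, contraction mapping in $C([0,T];H^1\times H^1)$ via the Sobolev embedding $H^1\hookrightarrow L^6$ and H\"older for the cubic terms, continuation to a maximal time, and then a.e.\ first- and second-order differentiability of $y(t)$ from the regularity of $(u,v)$ and $(u_t,v_t)$. Two minor points in your favour: you use the correct Klein--Gordon propagator $(1-\Delta)^{1/2}$ where the paper writes $\sqrt{-\Delta}$, and your justification of $\frac{\mathrm{d}^2}{\mathrm{d}t^2}y$ through the weak formulation (pairing against $u(t)\in H^1$ so that the $H^{-1}$-valued $u_{tt}$ is handled legitimately) is more careful than the paper's one-line appeal to integration by parts.
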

\begin{proof}
	We rewrite \eqref{eq:skg:int} in the integral vectorial form
	\begin{equation}\label{eq:skg:inte}
		\Lambda[u,v](t):=\begin{bmatrix}
			\cos\left(t\sqrt{-\Delta}\right)u_0+\frac{\sin\left(t\sqrt{-\Delta}\right)}{\sqrt{-\Delta}}u_1 \\
			\cos\left(t\sqrt{-\Delta}\right)v_0+\frac{\sin\left(t\sqrt{-\Delta}\right)}{\sqrt{-\Delta}}v_1
		\end{bmatrix} +\mathcal{N}[u,v](t)
	\end{equation}
	where $\mathcal{N}[u,v](t)$ is defined as
	\begin{equation}
		\mathcal{N}[u,v]:=\begin{bmatrix}
			\int_0^t\frac{\sin\left((t-t')\sqrt{-\Delta}\right)}{\sqrt{-\Delta}}\left(u^3(t')+\beta v^2(t')u(t')\right)\mathrm{d}t'\\
			\int_0^t\frac{\sin\left((t-t')\sqrt{-\Delta}\right)}{\sqrt{-\Delta}}\left(v^3(t')+\beta v(t')u^2(t')\right)\mathrm{d}t'
		\end{bmatrix}
	\end{equation}
	Taking the $H^1\times H^1$-norm on both sides of \eqref{eq:skg:inte}, we get for each $T>0$
	\begin{equation}\label{eq:1701}
		\left\|(u,v)\right\|_{L^\infty\left([0,T];H^1\times H^1\right)}\leq \left\|(u_0,u_1)\right\|_{H^1\times L^2}+\left\|(v_0,v_1)\right\|_{H^1\times L^2}+\left\|\mathcal{N}[u,v]\right\|_{L^\infty\left([0,T];H^1\times H^1\right)}
	\end{equation}
	Applying H\"{o}lder inequality and then the Sobolev embedding $H^1\subset L^6$, we can bound $\left\|\mathcal{N}[u,v]\right\|_{L^\infty\left([0,T];H^1\times H^1\right)}$ by
	\begin{equation}\label{eq:1700}
		\left\|\mathcal{N}[u,v]\right\|_{L^\infty\left([0,T];H^1\times H^1\right)}\leq CT\left\|(u,v)\right\|^3_{L^\infty\left([0,T],H^1\times H^1\right)}.
	\end{equation}
	Here $C$ is a positive constant depending on the Sobolev embedding and $\beta$. If we take $R=2\left\|(u_0,u_1)\right\|_{H^1\times L^2}+2\left\|(v_0,v_1)\right\|_{H^1\times L^2}$, then for $T=\frac{1}{16CR^2}$, we can see from \eqref{eq:1701} and \eqref{eq:1700} that $\Lambda$ maps the closed ball $B(0,2R)\subset C([0,T],H^1\times H^1)$ into itself.  Next letting $(u,v),(\tilde{u},\tilde{v})\in B(0,2R)$, we observe that
	\begin{equation}\label{eq:1704}
		\left\|\Lambda[u,v]-\Lambda[\tilde{u},\tilde{v}]\right\|_{L^\infty([0,T],H^1\times H^1)}=\left\|\mathcal{N}[u,v]-\mathcal{N}[\tilde{u},\tilde{v}]\right\|_{L^\infty([0,T],H^1\times H^1)}.
	\end{equation}
	Inserting the following algebraic relations
	\begin{equation*}
		u^3-\tilde{u}^3=(u-\tilde{u})\left(u^2+u\tilde{u}+\tilde{u}^2\right)
	\end{equation*}
	and
	\begin{equation*}
		uv^2-\tilde{u}\tilde{v}^2=(u-\tilde{u})v^2+\tilde{u}(v-\tilde{v})(v+\tilde{v})
	\end{equation*}
	into the first component of $\mathcal{N}[u,v]$, and using H\"{o}lder's inequality and Sobolev embedding $H^1\subset L^6$, we see that the first component $\mathcal{N}_1[u,v]-\mathcal{N}_1[\tilde{u},\tilde{v}]$ of $\mathcal{N}[u,v]-\mathcal{N}[\tilde{u},\tilde{v}]$ is under control
	\begin{equation}\label{eq:1702}
		\left\|\mathcal{N}_1[u,v]-\mathcal{N}_1[\tilde{u},\tilde{v}]\right\|_{L^\infty([0,T],H^1)}\leq CT\left\|u-\tilde{u}\right\|_{L^\infty([0,T],H^1)}\left(\|u\|^2_{L^\infty([0,T],H^1)}+\|\tilde{u}\|^2_{L^\infty([0,T],H^1)}\right)
	\end{equation}
	\begin{equation*}
		\ \ \ \ \ \ \ \ \ \ \ \ \ \ \ \ +CT\left\|u-\tilde{u}\right\|_{L^\infty([0,T],H^1)}\|v\|^2_{L^\infty([0,T],H^1)}
	\end{equation*}
	\begin{equation*}
		\ \ \ \ \ \ \ \ \ \ \ \ \ \ \ \ \
		+ CT\left\|u-\tilde{u}\right\|_{L^\infty([0,T],H^1)}\times \|\tilde{u}\|_{L^\infty([0,T],H^1)}\left(\|v\|_{L^\infty([0,T],H^1)}+\|\tilde{v}\|_{L^\infty([0,T],H^1)}\right)
	\end{equation*}
	for some positive constant $C$.
	
	Inserting the following algebraic relations
	\begin{equation*}
		v^3-\tilde{v}^3=(v-\tilde{v})\left(v^2+v\tilde{v}+\tilde{v}^2\right)
	\end{equation*}
	and
	\begin{equation*}
		vu^2-\tilde{v}\tilde{u}^2=(v-\tilde{v})u^2+\tilde{v}(u-\tilde{u})(u+\tilde{u})
	\end{equation*}
	into the second component of $\mathcal{N}[u,v]$, and using the same argument, we obtain
	\begin{equation}\label{eq:1703}
		\left\|\mathcal{N}_2[u,v]-\mathcal{N}_2[\tilde{u},\tilde{v}]\right\|_{L^\infty([0,T],H^1)}\leq CT\left\|v-\tilde{v}\right\|_{L^\infty([0,T],H^1)}\left(\|v\|^2_{L^\infty([0,T],H^1)}+\|\tilde{v}\|^2_{L^\infty([0,T],H^1)}\right)
	\end{equation}
	\begin{equation*}
		\ \ \ \ \ \ \ \ \ \ \ \ \ \ \ \ +CT\left\|v-\tilde{v}\right\|_{L^\infty([0,T],H^1)}\|u\|^2_{L^\infty([0,T],H^1)}
	\end{equation*}
	\begin{equation*}
		\ \ \ \ \ \ \ \ \ \ \ \ \ \ \ \ \
		+ CT\left\|v-\tilde{v}\right\|_{L^\infty([0,T],H^1)}\times \|\tilde{v}\|_{L^\infty([0,T],H^1)}\left(\|u\|_{L^\infty([0,T],H^1)}+\|\tilde{u}\|_{L^\infty([0,T],H^1)}\right)
	\end{equation*}
	for some positive constant $C$.
	
	Substituting \eqref{eq:1702} and \eqref{eq:1703} into \eqref{eq:1704}, and using Young's inequality, we get
	\begin{equation}
        \begin{split}
		&\left\|\Lambda[u,v]-\Lambda[\tilde{u},\tilde{v}]\right\|_{L^\infty([0,T],H^1\times H^1)} \\
~~~~~~~~~~~~~  & \leq CT\left\|(u,v)-(\tilde{u},\tilde{v})\right\|_{L^\infty([0,T],H^1\times H^1)}\left(\|(u,v)\|^2_{L^\infty([0,T],H^1\times H^1)}+\|(\tilde{u},\tilde{v})\|^2_{L^\infty([0,T],H^1\times H^1)}\right)
	   \end{split}
    \end{equation}
	for some constant $C$. If we take {$T\leq\frac{1}{16CR^2}$}, we see that $\Lambda:B(0,2R)\rightarrow B(0,2R)$ is a contraction map.
	
	Since $B(0,2R)$ is closed, then we conclude from Banach fixed theorem that there exist a solution $\left(u(t),v(t)\right)$ in $L^\infty([0,T],H^1\times H^1)$ to \eqref{eq:skg:int}. Up to now, we have constructed a unique solution $\left(u(t),v(t)\right)$ in a small time interval. We next try to extend this solution onto a maximal time interval.
	
	{
	At time $T$, as long as  $\left\|\left((u(T),\partial_tu(T)),(v(T),\partial_tv(T))\right)\right\|_{\mathcal{H}\times \mathcal{H}}$ is finite, we can repeat the Banach fixed point argument, beginning at the time $T$, to extend the solution to live on a longer time interval, say $[0, T+T_1]$ for some $T_1>0$ depending on $\left\|\left((u(T),\partial_tu(T)),(v(T),\partial_tv(T))\right)\right\|_{\mathcal{H}\times \mathcal{H}}$. Repeating this process, we can extend the solution to live on the maximal time interval $[0,T_{max})$.	
		
	From the proof of the existence of local solutions, we know that the solution $\left(u(t),v(t)\right)$ has its derivatives $\left(\partial_tu,\partial_tv\right)\in L^\infty([0,T_{max});L^2\times L^2)$. Hence $y(t)$ is differentiable for almost every $t\in[0,T_{max})$. If we use the integration by parts, we can see that $y(t)$ is also differentiable of second order, for almost every $t\in[0,T_{max})$.	
	}
\end{proof}

{We next recall two technical lemmas. The first one embodies the concavity argument, due to Levine (see \cite{levine}).}
\begin{lemma}[\cite{levine}]\label{lem:levine:1}
	Let $\psi(t)\in C^2$ for $t\geq t_b\geq 0$ and $\psi(t)>0$.
	Assume that \begin{enumerate}
		\item $\frac{\mathrm{d}^2\psi}{\mathrm{d}t^2}(t) \psi(t)-\gamma \left(\frac{\mathrm{d}\psi}{\mathrm{d}t}(t)\right)^2\geq 0, t\geq t_b, \gamma>1$
		\item $\psi(t_b)>0, \frac{\mathrm{d}\psi}{\mathrm{d}t}(t_b)>0$.
	\end{enumerate}
	Then there eixsts some finite time $t^*>0$ so that
	\begin{equation*}
		\psi(t)\rightarrow \infty,  \ \text{as}\ t\rightarrow t^*.
	\end{equation*}	
\end{lemma}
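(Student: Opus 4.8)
\textbf{Proof proposal for Lemma \ref{lem:levine:1}.}

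The plan is to run the classical Levine concavity trick: instead of working with $\psi$ directly, I would study the negative power $\varphi(t):=\psi(t)^{-(\gamma-1)}$ (equivalently $\psi^{-\alpha}$ with $\alpha=\gamma-1>0$) and show it is a positive concave function that is strictly decreasing at $t_b$; since a concave function with negative derivative must hit zero in finite time, and $\varphi\to 0$ forces $\psi\to\infty$, this yields the blow-up time $t^\ast$. First I would compute
\begin{equation*}
\varphi'(t)=-\alpha\,\psi(t)^{-\alpha-1}\psi'(t),
\end{equation*}
so that $\varphi'(t_b)<0$ by hypothesis (2) (both $\psi(t_b),\psi'(t_b)>0$). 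Then I would differentiate once more:
\begin{equation*}
\varphi''(t)=-\alpha\,\psi(t)^{-\alpha-2}\Bigl(\psi(t)\psi''(t)-(\alpha+1)\bigl(\psi'(t)\bigr)^2\Bigr).
\end{equation*}
Choosing $\alpha=\gamma-1$ makes $\alpha+1=\gamma$, so hypothesis (1) gives exactly $\psi\psi''-\gamma(\psi')^2\ge 0$ on $[t_b,\infty)$, whence $\varphi''(t)\le 0$ there; thus $\varphi$ is concave on $[t_b,\infty)$.

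Next I would argue that $\psi'$ stays positive, so $\varphi$ is strictly decreasing on all of $[t_b,\infty)$ (not just at $t_b$). From (1), wherever $\psi>0$ we have $\psi''\ge \gamma(\psi')^2/\psi\ge 0$, so $\psi'$ is nondecreasing as long as the solution stays positive; combined with $\psi'(t_b)>0$ this gives $\psi'(t)\ge \psi'(t_b)>0$ and hence $\psi$ is increasing and remains positive. Consequently $\varphi'(t)=-\alpha\psi^{-\alpha-1}\psi'<0$ for all $t\ge t_b$, and in fact $\varphi'(t)\le \varphi'(t_b)<0$ because $\varphi$ is concave so $\varphi'$ is nonincreasing. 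Now the fundamental theorem of calculus gives, for $t\ge t_b$,
\begin{equation*}
0<\varphi(t)\le \varphi(t_b)+(t-t_b)\,\varphi'(t_b),
\end{equation*}
and the right-hand side is strictly negative once $t>t_b-\varphi(t_b)/\varphi'(t_b)=:t^\ast$, which is finite. Since $\varphi>0$ always, the maximal interval of existence of $\psi$ must end before $t^\ast$; letting $t\uparrow t^\ast$ (or the actual blow-up time, which is $\le t^\ast$), we have $\varphi(t)\to 0^+$, i.e. $\psi(t)^{-(\gamma-1)}\to 0$, which since $\gamma-1>0$ forces $\psi(t)\to\infty$. This is the desired conclusion.

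The only genuinely delicate point is logical bookkeeping about the domain: the hypotheses put $\psi\in C^2$ and $\psi>0$ "for $t\ge t_b$", so strictly speaking $\psi$ is globally defined and positive, and what the argument really shows is that this is impossible past $t^\ast$ — i.e. the conclusion is that $\psi$ cannot remain finite and $C^2$ all the way to $t^\ast$, hence it blows up at some $t^{\ast\ast}\le t^\ast$. I would phrase the statement's "$\psi(t)\to\infty$ as $t\to t^\ast$" in this contrapositive spirit, which is how it is actually invoked in the blow-up theorems (one assumes global existence for contradiction, derives (1)–(2), and concludes $\psi$ — here $y(t)$ — cannot stay finite). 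No serious obstacle beyond this; the computation of $\varphi''$ and the sign tracking are routine.
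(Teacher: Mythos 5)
Your argument is correct and is precisely the classical concavity proof of Levine's lemma (passing to $\varphi=\psi^{-(\gamma-1)}$, checking $\varphi''\le 0$ and $\varphi'(t_b)<0$, and concluding that the positive concave function $\varphi$ must reach zero in finite time); the paper itself gives no proof, simply citing Levine's 1974 paper, where this is exactly the argument used. Your closing remark about reading the conclusion in contrapositive form is also the right way to reconcile the lemma's hypotheses with how it is invoked in the blow-up theorems.
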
	

The second one is about Schwarz spherical rearrangement {(see \cite[Appendix III.3]{BL83} and \cite[Chapter 3]{LL01} for the proof). }
\begin{lemma}\label{lem:rearrangement}
 Suppose $u_1,u_2\in H^1$ and  let $u^*_1,u^*_2$ be Schwarz spherical rearrangement of $u_1, u_2$. Then for any $p\in [2,6]$,
\begin{equation}
\|u_i^*\|_{H^1}\leq \|u_i\|_{H^1},\quad \|u_i^*\|_{L^p}{=}\|u_i\|_{L^p}, \quad \forall i\in \{1,2\}
\end{equation}
and
\begin{equation}
\quad \int_{\mathbb{R}^3}(u_1^*)^2(u_2^*)^2\mathrm{d}x\geq  \int_{\mathbb{R}^3}u_1^2u_2^2\mathrm{d}x.
\end{equation}
\end{lemma}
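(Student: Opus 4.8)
The plan is to treat the lemma as three essentially independent classical facts about the symmetric decreasing (Schwarz) rearrangement, the only convention being that $u_i^*$ is the rearrangement of $|u_i|$, so it is a nonnegative, radial, nonincreasing function. All quantities are finite because $u_i\in H^1(\mathbb{R}^3)\hookrightarrow L^p$ for $p\in[2,6]$ and, by interpolation, $u_i\in L^4$.

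\emph{The $L^p$ identity.} By construction $u_i^*$ is equimeasurable with $|u_i|$, that is $|\{|u_i|>t\}|=|\{u_i^*>t\}|$ for every $t>0$. Using the layer-cake (bathtub) representation $\int_{\mathbb{R}^3}|f|^p\,\mathrm{d}x=p\int_0^\infty t^{p-1}|\{|f|>t\}|\,\mathrm{d}t$, which depends on $f$ only through its distribution function, one immediately gets $\|u_i^*\|_{L^p}=\||u_i|\|_{L^p}=\|u_i\|_{L^p}$ for all $p\in[1,\infty)$, in particular for $p\in[2,6]$.

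\emph{The $H^1$ inequality.} Taking $p=2$ above gives $\|u_i^*\|_{L^2}=\|u_i\|_{L^2}$. For the gradient term I would invoke the P\'olya--Szeg\H{o} inequality $\|\nabla u_i^*\|_{L^2}\le\|\nabla |u_i|\|_{L^2}=\|\nabla u_i\|_{L^2}$, valid for any $u_i\in H^1(\mathbb{R}^3)$; its standard proof combines the coarea formula with the isoperimetric inequality applied to the superlevel sets $\{|u_i|>t\}$, whose rearrangements $\{u_i^*>t\}$ are the balls of equal volume and hence of least perimeter. Adding the squares of the two estimates yields $\|u_i^*\|_{H^1}^2=\|u_i^*\|_{L^2}^2+\|\nabla u_i^*\|_{L^2}^2\le\|u_i\|_{H^1}^2$.

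\emph{The product inequality.} Here the tool is the Hardy--Littlewood inequality: for nonnegative measurable $f,g$ on $\mathbb{R}^3$,
\begin{equation*}
\int_{\mathbb{R}^3}f(x)g(x)\,\mathrm{d}x\le\int_{\mathbb{R}^3}f^*(x)g^*(x)\,\mathrm{d}x ,
\end{equation*}
which follows by writing both sides via layer-cake as $\int_0^\infty\int_0^\infty|\{f>s\}\cap\{g>t\}|\,\mathrm{d}s\,\mathrm{d}t$ and using $|\{f>s\}\cap\{g>t\}|\le\min(|\{f>s\}|,|\{g>t\}|)=|\{f^*>s\}\cap\{g^*>t\}|$, the last equality because $\{f^*>s\}$ and $\{g^*>t\}$ are concentric balls. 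I would apply this with $f=u_1^2$ and $g=u_2^2$. Since $s\mapsto s^2$ is nondecreasing on $[0,\infty)$ and vanishes at $0$, rearrangement commutes with it: $(u_i^2)^*=((|u_i|)^*)^2=(u_i^*)^2$. Therefore
\begin{equation*}
\int_{\mathbb{R}^3}u_1^2u_2^2\,\mathrm{d}x\le\int_{\mathbb{R}^3}(u_1^2)^*(u_2^2)^*\,\mathrm{d}x=\int_{\mathbb{R}^3}(u_1^*)^2(u_2^*)^2\,\mathrm{d}x .
\end{equation*}

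\emph{Main obstacle.} There is no genuine difficulty: all three ingredients are classical, which is why the authors simply cite \cite{BL83,LL01}. The only points that require care are the identity $(u_i^2)^*=(u_i^*)^2$ (equivalently, that rearrangement commutes with post-composition by monotone functions vanishing at the origin) and the measurability/finiteness bookkeeping; the mildly technical input is the P\'olya--Szeg\H{o} inequality, whose proof packages the coarea formula together with the isoperimetric inequality.
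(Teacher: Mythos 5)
Your proof is correct and follows exactly the classical route the paper defers to: the paper gives no argument of its own, simply citing \cite{BL83} and \cite{LL01}, and your three ingredients (equimeasurability/layer-cake for the $L^p$ identity, P\'olya--Szeg\H{o} for the gradient, Hardy--Littlewood together with $(u_i^2)^*=(u_i^*)^2$ for the product term) are precisely the standard facts contained in those references.
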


\section{Proof of Theorem \ref{thm:neg:int}}\label{sect:3}
\label{sec:proof:thm1}

Recall the system we are going to consider is
\begin{equation}\label{eq:skg}
	\left\{
	\begin{split}
		 u_{tt}-\Delta u+u&=u^3+\beta v^2u, \mbox { in } \mathbb{R}\times\mathbb{R}^3,\\
		 v_{tt}-\Delta v+v&=v^3+\beta u^2v, \mbox { in } \mathbb{R}\times\mathbb{R}^3,\\
		(u(0),u_t(0))&=(u_0,u_1),\mbox { in } \mathbb{R}^3, \\
		(v(0),v_t(0))&=(v_0,v_1),\mbox { in }\mathbb{R}^3.
	\end{split}
	\right.
\end{equation}	
Here $\beta\in\mathbb{R}$ is a fixed parameter.
Let $(u_0,u_1),(v_0,v_1)\in \mathcal{H}$, then we can apply Proposition \ref{prop:cauchy} to see that the equation \eqref{eq:skg} has a unique solution on the {interval $[0,T_{max})$. Our goal here is to give some conditions on the initial datum so that $T_{max}$ is finite, \textit{i.e.}, the corresponding solution blows up in finite time. }

Recall that the energy functional $E(t)$ and the `mass' functional $y(t)$ are defined respectively in \eqref{energy} and \eqref{mass}.
 By Proposition \ref{prop:cauchy}, we can differentiate $y(t)$ in $t$-variable to obtain
 \begin{equation}\label{eq:ene:d1}
 	\frac{\mathrm{d}y}{\mathrm{d}t}(t)=2\big\langle(u(t),v(t)),(u_t(t),v_t(t))\big\rangle_{\left(L^2\times L^2\right)^2},
 \end{equation}	
and
 \begin{equation}\label{eq:ene:d2}
 	\frac{\mathrm{d}^2y}{\mathrm{d}t^2}(t)=2\int_{\mathbb{R}^3}\left(|u_t|^2+|v_t|^2\right)\mathrm{d}x-2K[u,v](t),
 \end{equation}
 where  $K[u,v](t)$ is the Nehari functional defined as in \eqref{eq:3000}.

By definitions of the energy functional $E(t)=E[u,v](t)$ and the Nehari functional $K(t):=K[u,v](t)$, we can deduce by some algebraic calculations
\begin{equation}\label{eq:2000}
	2(1+\gamma)E(t)-K(t)=(1+\gamma)\left(\|(u_t,v_t)\|^2_{L^2\times L^2}\right)+\gamma \left(\|(u,v)\|^2_{H^1\times H^1}\right)+\int_{\mathbb{R}^3}\left[\frac{(1-\gamma)}{2}\left(u^4+v^4+2\beta u^2v^2\right)\right]\mathrm{d}x.
\end{equation}
Here $\gamma$ is some parameter, which we are going to specify. But for $\gamma\in (0,1]$, {if we assume $\beta\geq -1$,} the last term on the right hand side of this equality is non-negative so that we can control the kinetic energy by some proper combination of $E$ and $K$. More precisely, we have the following result.
\begin{lemma}\label{lem:nehari:neg} Fix $\beta\geq -1$.
	For each $\gamma\in (0,1]$, we then have
	\begin{equation*}
		(\gamma+1)\|(u_t,v_t)\|_{L^2\times L^2}^2+\gamma \|(u,v)\|^2_{H^1\times H^1}\leq 2(1+\gamma)E(t)-K(t),
	\end{equation*}
	for any solution $(u,v)\in H^1\times H^1$ to the equation \eqref{eq:skg:int}.
\end{lemma}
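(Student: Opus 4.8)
The plan is to read off the result directly from the algebraic identity \eqref{eq:2000}, which expresses $2(1+\gamma)E(t)-K(t)$ as a sum of the kinetic term $(1+\gamma)\|(u_t,v_t)\|_{L^2\times L^2}^2$, the term $\gamma\|(u,v)\|_{H^1\times H^1}^2$, and the remainder $\frac{1-\gamma}{2}\int_{\mathbb{R}^3}\bigl(u^4+v^4+2\beta u^2v^2\bigr)\,\mathrm{d}x$. So the only thing to check is that this last remainder is non-negative under the hypotheses $\gamma\in(0,1]$ and $\beta\geq-1$; once that is done, discarding it yields exactly the claimed inequality. If one prefers a self-contained write-up, the identity \eqref{eq:2000} itself is obtained by substituting the definitions \eqref{energy} of $E(t)$ and \eqref{eq:3000} of $K(t)$ and collecting the coefficients of $\|(u_t,v_t)\|^2$, $\|(u,v)\|_{H^1\times H^1}^2$ and $\int(u^4+v^4+2\beta u^2v^2)$; this is a one-line computation.

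The key pointwise estimate is that, for $\beta\geq-1$ and all real $a=u(x)$, $b=v(x)$,
\begin{equation*}
 a^4+b^4+2\beta a^2b^2\ \geq\ a^4+b^4-2a^2b^2\ =\ (a^2-b^2)^2\ \geq\ 0 ,
\end{equation*}
using $2\beta a^2b^2\geq-2a^2b^2$ exactly when $\beta\geq-1$. Integrating over $\mathbb{R}^3$ gives $\int_{\mathbb{R}^3}(u^4+v^4+2\beta u^2v^2)\,\mathrm{d}x\geq0$ (all integrals being finite by the Sobolev embedding $H^1\subset L^4$ in dimension three, as used in Proposition \ref{prop:cauchy}). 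Since $\gamma\in(0,1]$ forces $\frac{1-\gamma}{2}\geq0$, the remainder term in \eqref{eq:2000} is a product of two non-negative quantities, hence non-negative. Dropping it from \eqref{eq:2000} then gives
\begin{equation*}
 (1+\gamma)\|(u_t,v_t)\|_{L^2\times L^2}^2+\gamma\|(u,v)\|_{H^1\times H^1}^2\ \leq\ 2(1+\gamma)E(t)-K(t),
\end{equation*}
which is the assertion.

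There is essentially no serious obstacle here: the content is the sign observation $u^4+v^4+2\beta u^2v^2\geq(u^2-v^2)^2\geq0$ together with $1-\gamma\geq0$. The only mild care needed is to make sure all the quantities appearing are finite and that \eqref{eq:2000} is legitimately applied along the flow, i.e.\ at times $t$ where $(u(t),v(t))\in H^1\times H^1$; this is guaranteed by Proposition \ref{prop:cauchy} on $[0,T_{\max})$. It is worth recording in the write-up that this is precisely where the restriction $\beta\geq-1$ enters, since for $\beta<-1$ the pointwise quantity $u^4+v^4+2\beta u^2v^2$ can be negative and the argument breaks down — consistent with Remark \ref{rmk:01080}.
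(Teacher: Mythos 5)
Your proposal is correct and follows exactly the paper's route: the lemma is read off from the identity \eqref{eq:2000} after observing that the remainder $\frac{1-\gamma}{2}\int_{\mathbb{R}^3}\left(u^4+v^4+2\beta u^2v^2\right)\mathrm{d}x$ is non-negative for $\gamma\in(0,1]$ and $\beta\geq-1$, the latter via the pointwise bound $u^4+v^4+2\beta u^2v^2\geq (u^2-v^2)^2\geq 0$. Your write-up is in fact slightly more explicit than the paper's, which leaves the pointwise sign observation implicit.
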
	
With this lemma at hand, we are now ready to give
\begin{proof}[Proof of \textbf{Theorem \ref{thm:neg:int}}] We argue by contradiction. Assume the solution does not blow up in finite time. Then at any time $t$, applying Cauchy-Schwarz inequality in \eqref{eq:ene:d1}, we get
	\begin{equation}\label{eq:diff:2}
		\frac{\mathrm{d}}{\mathrm{d}t}y(t)\leq 2\left\|(u,v)\right\|_{L^2\times L^2}\ \left\|(u_t,v_t)\right\|_{L^2\times L^2}.
	\end{equation}
	Recall that the differential expression \eqref{eq:ene:d2} of $\frac{\mathrm{d} y}{\mathrm{d}t}$ reads
	\begin{equation}\label{eq:diff:1}
		\frac{\mathrm{d}^2}{\mathrm{d}t^2}y(t)=2\left\|(u_t,v_t)\right\|_{L^2\times L^2}^2-2K(t).
	\end{equation}
	In order to treat the term $-K(t)$, we invoke Lemma \ref{lem:nehari:neg}, to conclude that $K(t)\leq 2(1+\gamma)E(t)$ for any $\gamma \in (0,1]$. Together with the conservation of energy, we get
    \begin{equation*}
K(t)\leq 2(1+\gamma)E(0)=:-\delta <0.
    \end{equation*}
Inserting this into the above differential {equality} \eqref{eq:diff:1}, we get
	\begin{equation*}
		\frac{\mathrm{d}^2}{\mathrm{d}t^2}y(t)\geq 2\left\|(u_t,v_t)\right\|_{L^2\times L^2}^2 +2 \delta.
	\end{equation*}
	It then follows that $\frac{\mathrm{d}}{\mathrm{d}t}y(t)$ tends to $+\infty$ as $t$ approaches infinity and hence so does $y(t)$. {In order to obtain the contradiction, we use \eqref{eq:2000} and the conservation of energy to conclude from \eqref{eq:ene:d2} that
	\begin{equation}\label{eq:400}
		\frac{\mathrm{d}^2}{\mathrm{d}t^2}y(t)\geq (4+2\gamma)\|(u_t,v_t)\|^2_{L^2\times L^2}+2\gamma \left(\|(u,v)\|^2_{H^1\times H^1}\right)-4(1+\gamma)E(0).
	\end{equation}}
	By \eqref{eq:diff:2}, we bound this by Cauchy-Schwarz inequality from below
	\begin{equation*}
		\frac{\mathrm{d}^2}{\mathrm{d}t^2}y(t)\geq \left(1+\frac{\gamma}{2}\right)\frac{\left(\frac{\mathrm{d}}{\mathrm{d}t}y\right)^2}{y(t)} +2\delta.
	\end{equation*}
	We then apply Levine's lemma \ref{lem:levine:1} to conclude that $y(t)$ blows up before some finite positive time. But this contradicts the assumption that the solution exists globally. So the solution $\left(u(t),v(t)\right)$ blows up in finite time. This finishes the proof of asserted result under the condition in \textit{(i)}.
	
	Next we assume $E(0)=0$ and $\left\langle(u_0,v_0),(u_1,v_1)\right\rangle>0$. As in the first item, we argue by contradiction. Assume the solution does not blow in finite time. By \eqref{eq:400}, we have\begin{equation}\label{eq:diff:5}
		\frac{\mathrm{d}^2}{\mathrm{d}t^2}y\geq (4+2\gamma)\left(\|(u_t,v_t)\|^2_{L^2\times L^2}\right)+2\gamma \left(\|(u,v)\|^2_{H^1\times H^1}\right)\geq 0.
	\end{equation}
	Solving this differential inequality, complemented with the initial condition
    \begin{equation*}
    \frac{\mathrm{d}}{\mathrm{d}t}y(0)=2\langle(u_0,v_0),(u_1,v_1)\rangle_{(L^2\times L^2)^2}>0,
    \end{equation*}
 gives us
	\begin{equation*}
		\frac{\mathrm{d}}{\mathrm{d}t}y(t)\geq 2\big\langle(u_0,v_0),(u_1,v_1)\big\rangle_{(L^2\times L^2)^2}>0,\ \ {\forall t> 0.}
	\end{equation*}
	This infers that there exists some $t_b>0$ so that $y(t)>0$ for all $t>t_b$. This allows us to rewrite \eqref{eq:diff:5} as
	\begin{equation}
		\frac{\mathrm{d}^2}{\mathrm{d}t^2}y(t)\geq (1+\frac{\gamma}{2})\frac{\left(\frac{\mathrm{d}}{\mathrm{d}t}y\right)^2}{y(t)} +c, \forall t\in [t_b,\infty).
	\end{equation}
	Here $c$ is some positive constant, for instance we can take
    \begin{equation*}
        c=2\gamma \left(\|(u(t),v(t))\|^2_{H^1\times H^1}\right)\geq 2\gamma \left(\|(u(t_b),v(t_b))\|^2_{L^2\times L^2}\right) >0.
    \end{equation*}
	We then apply Levine's lemma \ref{lem:levine:1} to conclude that $y(t)$ blows up before some finite positive time, which contradicts the initial assumption. Thus the solution blows up in finite time under the condition in \textit{(ii)}. This completes the proof of \textbf{Theorem \ref{thm:neg:int}}.
\end{proof}

\section{Proof of Theorem \ref{thm:skg:int}}
\label{sec:proof:thm2}

We devote this section to proving \textbf{Theorem} \ref{thm:skg:int}. Let $(u_0,u_1),(v_0,v_1)\in\mathcal{H}$ and  $\left(u(t),v(t)\right), t\in[0,T_{max})$ be a solution to \eqref{eq:skg}. We define the projection functional $P(t)$ as in \eqref{eq:proj:int} and the mass as in \eqref{mass}. As long as $y(t)\neq0$, we can rewrite the energy function $E(t)$ as
\begin{equation}\displaystyle\label{eq:ene:1}
	E(t)=\frac{1}{2}(P(t)+y(t))+\frac{1}{2}\Bigg(\Bigg\|u_t-\frac{\big\langle(u(t),v(t)),(u_t(t),v_t(t))\big\rangle_{(L^2\times L^2)^2}}{||(u(t),v(t))||_{L^2\times L^2}^2}u\Bigg\|_{L^2}^2
\end{equation}
\begin{equation*}
	+\Bigg\|v_t-\frac{\big\langle(u(t),v(t)),(u_t(t),v_t(t))\big\rangle_{(L^2\times L^2)^2}}{||(u(t),v(t))||_{L^2\times L^2}^2}v\Bigg\|_{L^2}^2+||\nabla u||_{L^2}^2+||\nabla v||_{L^2}^2\Bigg)-\frac{1}{4}\int_{\mathbb{R}^3}\left(u^4+v^4+2\beta u^2v^2\right)\mathrm{d}x.
\end{equation*}

In order to prove \textbf{Theorem \ref{thm:skg:int}}, we first prove two lemmas.

\begin{lemma}\label{lem:skg}
	Under the assumption of Theorem \ref{thm:skg:int}, we assume further $K(t)<0$ for $t\in [0,T]$, $T<T_{max}$. Then
	\begin{enumerate}[(i)]
		\item {$\frac{\mathrm{d}y}{\mathrm{d}t}(t)>0, \frac{\mathrm{d} P}{\mathrm{d}t}(t)>0, \ \forall t\in (0,T]$};
		\item $y(t), \frac{\mathrm{d} y}{\mathrm{d}t}(t)$ and $P(t)$ are strictly increasing in $t\in [0,T]$;
		\item $\frac{\mathrm{d}^2y}{\mathrm{d}t^2}(t)>0$, for $t\in [0,T]$;
		\item $y(0)-y(t)+2P(0)\leq 0$, for $t\geq t_b:=\frac{1}{2}\frac{\frac{\mathrm{d} y}{\mathrm{d}t}(0)}{y(0)}$.
	\end{enumerate}
\end{lemma}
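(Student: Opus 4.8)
The plan is to establish the four assertions essentially in the order stated, since each feeds into the next. Throughout we work on the interval $[0,T]$ where $K(t)<0$ is assumed, and we use the two basic differential identities \eqref{eq:ene:d1}--\eqref{eq:ene:d2}, namely $\frac{\mathrm{d}y}{\mathrm{d}t}(t)=2\langle(u,v),(u_t,v_t)\rangle_{(L^2\times L^2)^2}$ and $\frac{\mathrm{d}^2y}{\mathrm{d}t^2}(t)=2\|(u_t,v_t)\|_{L^2\times L^2}^2-2K(t)$.

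For \emph{(iii)}: since $K(t)<0$ on $[0,T]$, the identity \eqref{eq:ene:d2} immediately gives $\frac{\mathrm{d}^2 y}{\mathrm{d}t^2}(t)=2\|(u_t,v_t)\|_{L^2\times L^2}^2-2K(t)>0$ on $[0,T]$. This also shows $\frac{\mathrm{d}y}{\mathrm{d}t}$ is strictly increasing on $[0,T]$. For \emph{(i)} and the $y$-part of \emph{(ii)}: from assumption (ii) of Theorem \ref{thm:skg:int} we have $\frac{\mathrm{d}y}{\mathrm{d}t}(0)=2\langle(u_0,v_0),(u_1,v_1)\rangle_{(L^2\times L^2)^2}\geq 0$; combined with strict monotonicity of $\frac{\mathrm{d}y}{\mathrm{d}t}$ this yields $\frac{\mathrm{d}y}{\mathrm{d}t}(t)>0$ for all $t\in(0,T]$, hence $y$ is strictly increasing on $[0,T]$ and (using assumption (i), $y(0)>0$) stays positive, so $P(t)$ is well defined throughout. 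The monotonicity of $P$ is the first genuinely substantive point: one differentiates $P(t)=\frac{\langle(u,v),(u_t,v_t)\rangle^2}{y(t)}$ (a quotient of the square of the numerator in \eqref{eq:ene:d1} by $y$), and one must show this derivative is positive. The natural route is to compute $\frac{\mathrm{d}}{\mathrm{d}t}\langle(u,v),(u_t,v_t)\rangle$ using the equation \eqref{eq:skg}: this produces $\|(u_t,v_t)\|^2_{L^2\times L^2}-K(t)$ (indeed $\frac{\mathrm{d}}{\mathrm{d}t}\langle(u,v),(u_t,v_t)\rangle = \|(u_t,v_t)\|^2 - \langle(u,v),(-\Delta+1)(u,v)\rangle + \int(u^4+v^4+2\beta u^2v^2) = \|(u_t,v_t)\|^2 - K(t) > 0$ since $K<0$). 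Writing $N(t):=\langle(u,v),(u_t,v_t)\rangle=\tfrac12\frac{\mathrm{d}y}{\mathrm{d}t}\geq 0$, one gets $\frac{\mathrm{d}P}{\mathrm{d}t}=\frac{2N N'}{y}-\frac{N^2 y'}{y^2}=\frac{N}{y}\left(2N'-\frac{N y'}{y}\right)=\frac{N}{y}\left(2N'-\frac{2N^2}{y}\right)$; since $N\geq0$, $y>0$, and $N'=\|(u_t,v_t)\|^2-K(t)\geq \|(u_t,v_t)\|^2 \geq N^2/y$ by Cauchy--Schwarz (which gives $N^2\leq \|(u,v)\|^2\|(u_t,v_t)\|^2 = y\|(u_t,v_t)\|^2$), the bracket is nonnegative, and strict positivity for $t\in(0,T]$ follows because $N(t)>0$ there and the inequality $N'\geq N^2/y$ can only be an equality at a point where $-K(t)=0$, excluded by hypothesis. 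This simultaneously delivers \emph{(i)} and the $P$-part of \emph{(ii)}.

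For \emph{(iv)}: the idea is to use \eqref{eq:ene:1}, which expresses $E(t)=\tfrac12(P(t)+y(t))+R(t)$ where $R(t)\geq 0$ is a sum of an $L^2$-distance-squared term, $\|\nabla u\|^2+\|\nabla v\|^2$, minus the quartic term — wait, more carefully, one must rewrite $K(t)$ in terms of $E(t)$, $P(t)$ and $y(t)$ as in \eqref{eq:equa:1} referenced in the introduction sketch. Concretely, the relation $\tfrac14 K(t)=\tfrac14 y(t)+\tfrac12 P(t)-E(t)+\|(u_t,v_t)\|^2_{L^2\times L^2}$-type identity (obtained by subtracting the quartic-energy identity from $E$), together with energy conservation $E(t)=E(0)$ and assumption (iii) $\tfrac14 y(0)+\tfrac12 P(0)\geq E(0)$, gives $\tfrac14 K(t)\leq \tfrac14 y(t)+\tfrac12 P(t)-\tfrac14 y(0)-\tfrac12 P(0)$. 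Since $K(t)<0$ this forces $\tfrac14 y(t)+\tfrac12 P(t)-\tfrac14 y(0)-\tfrac12 P(0)>0$ — which is the wrong direction, so instead one must use the full expansion \eqref{eq:ene:1} with the nonnegative remainder $R$. The correct chain is: $E(0)=E(t)=\tfrac12(P(t)+y(t))+R(t)$ with $R(t)\geq 0$, hence $\tfrac12 P(t)+\tfrac12 y(t)\leq E(0)\leq \tfrac14 y(0)+\tfrac12 P(0)$; but this is not quite it either since the RHS of (iii) is $\tfrac14 y(0)+\tfrac12 P(0)$, not $\tfrac12 y(0)$. Let me restate the intended argument: using monotonicity of $P$ from \emph{(ii)}, $P(t)\geq P(0)$, so from $\tfrac12 P(t)+\tfrac12 y(t)\leq E(0)$ combined with $-P(t)\leq -P(0)$ we obtain $\tfrac12 y(t)\leq E(0)-\tfrac12 P(0)\leq \tfrac14 y(0)+\tfrac12 P(0)-\tfrac12 P(0)=\tfrac14 y(0)$, i.e. $y(t)\leq \tfrac12 y(0)$?? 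That cannot hold since $y$ increasing. This sign-chasing is exactly where the care is needed: the actual statement \emph{(iv)} involves the fundamental-theorem-of-calculus bound, $y(t)-y(0)=\int_0^t \frac{\mathrm{d}y}{\mathrm{d}s}(s)\,\mathrm{d}s \geq t\,\frac{\mathrm{d}y}{\mathrm{d}t}(0)$ by monotonicity of $\frac{\mathrm{d}y}{\mathrm{d}t}$, hence $y(0)-y(t)+2P(0)\leq 2P(0)-t\frac{\mathrm{d}y}{\mathrm{d}t}(0)$. Now $\frac{\mathrm{d}y}{\mathrm{d}t}(0)=2N(0)$ and, I claim, $2P(0)\leq$ a multiple of $N(0)$ times something — actually $P(0)=N(0)^2/y(0)$, so $2P(0)=2N(0)^2/y(0)$, and $2P(0)-t\frac{\mathrm{d}y}{\mathrm{d}t}(0)=\frac{2N(0)^2}{y(0)}-2tN(0)\leq 0$ precisely when $t\geq \frac{N(0)}{y(0)}=\frac{1}{2}\frac{\frac{\mathrm{d}y}{\mathrm{d}t}(0)}{y(0)}=t_b$, giving exactly the stated threshold.

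So the clean plan for \emph{(iv)} is: (a) from \emph{(ii)}, $\frac{\mathrm{d}y}{\mathrm{d}t}$ is increasing, so $y(t)\geq y(0)+t\frac{\mathrm{d}y}{\mathrm{d}t}(0)$; (b) substitute and use $2P(0)=\tfrac{1}{2}\frac{(\frac{\mathrm{d}y}{\mathrm{d}t}(0))^2}{y(0)}$ (which is just the definition of $P(0)$ rewritten via \eqref{eq:ene:d1}) to get $y(0)-y(t)+2P(0)\leq \tfrac12\frac{(\frac{\mathrm{d}y}{\mathrm{d}t}(0))^2}{y(0)}-t\frac{\mathrm{d}y}{\mathrm{d}t}(0)$; (c) observe the right side is $\leq 0$ for $t\geq t_b=\tfrac12\frac{\frac{\mathrm{d}y}{\mathrm{d}t}(0)}{y(0)}$. (If $\frac{\mathrm{d}y}{\mathrm{d}t}(0)=0$ the statement is even easier: then $t_b=0$, $P(0)=0$, and $y(0)-y(t)\leq 0$ by monotonicity of $y$.) The main obstacle, as the sign-chase above illustrates, is not any deep estimate but getting the algebra of the energy/Nehari/projection identity \eqref{eq:ene:1} exactly right and tracking which monotonicities are needed where; the only analytic input beyond bookkeeping is the Cauchy--Schwarz inequality $N(t)^2\leq y(t)\|(u_t,v_t)\|^2_{L^2\times L^2}$ used in establishing $\frac{\mathrm{d}P}{\mathrm{d}t}>0$, and the differentiation of the bilinear form $\langle(u,v),(u_t,v_t)\rangle$ against the equation, which produces the crucial $-K(t)$ term. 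I would present \emph{(iii)} first, then \emph{(i)}+\emph{(ii)} together, then \emph{(iv)} as a corollary of the monotonicity in \emph{(ii)}.
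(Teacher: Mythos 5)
Your proposal is correct and follows essentially the same route as the paper: (iii) from the identity \eqref{eq:ene:d2} and $K<0$, then (i)--(ii) by integrating $\frac{\mathrm{d}^2y}{\mathrm{d}t^2}>0$ from $\frac{\mathrm{d}y}{\mathrm{d}t}(0)\geq 0$ and showing $\frac{\mathrm{d}P}{\mathrm{d}t}>0$ via the quotient-rule computation reduced to Cauchy--Schwarz plus the $-K(t)>0$ term, and (iv) by the fundamental theorem of calculus with the monotonicity of $\frac{\mathrm{d}y}{\mathrm{d}t}$ together with $2P(0)=\tfrac12\bigl(\frac{\mathrm{d}y}{\mathrm{d}t}(0)\bigr)^2/y(0)$. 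The detour through \eqref{eq:ene:1} in your discussion of (iv) is a dead end, but you correctly identify it as such and your final ``clean plan'' is exactly the paper's argument.
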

\begin{proof}
	We use \eqref{eq:ene:d2} and  the assumption $K(t)<0$ to obtain
	\begin{equation}\label{eq:300}
	\frac{\mathrm{d}^2y}{\mathrm{d}t^2}(t)=2(||u_t||_{L^2}^2+||v_t||_{L^2}^2)-2K(t)>0,
	\end{equation}
	which is just the asserted result in \textit{(iii)}.	Consequently $\frac{\mathrm{d}y}{\mathrm{d}t}(t)$ is strictly increasing on $[0,T]$. Solving this differential inequality together with the initial condition
	\begin{equation}\label{eq:301}
    \frac{\mathrm{d}y}{\mathrm{d}t}(0)=2\big\langle(u_0,v_0),(u_1,v_1)\big\rangle_{(L^2\times L^2)^2}\geq 0,
	\end{equation}
	we obtain
	\begin{equation}\label{eq:ene:d3}
		\frac{\mathrm{d}y}{\mathrm{d}t}(t)>0,\ \forall t\in (0,T].
	\end{equation}
This is indeed the first one of the stated results in \textit{(i)}. What's more, as a consequence of the fundamental theorem of calculus, $y(t)$ is increasing in $t\in [0,T]$. Combining this with the non-negativeness of $y(0)$, we also get $y(t)>0$ for $t\in(0,T]$. Thus the projection functional $P$ in \eqref{eq:proj:int} is well defined for $t>0$.

Differentiating $P$ in the time variable $t$ and using \eqref{eq:ene:d1}, we obtain
	\begin{equation*}
		\frac{\mathrm{d} P}{\mathrm{d}t}(t)=\frac{1}{4}\frac{\mathrm{d}}{\mathrm{d}t}\left(\frac{\left(\frac{\mathrm{d} y}{\mathrm{d}t}(t)\right)^2}{y(t)}\right)=\frac{\frac{\mathrm{d} y}{\mathrm{d}t}(t)}{4y^2(t)}\left(2y \frac{\mathrm{d}^2y}{\mathrm{d}t^2}(t)-\left(\frac{\mathrm{d} y}{\mathrm{d}t}(t)\right)^2\right).
	\end{equation*}
	Noticing
	\begin{equation*}
		2y \frac{\mathrm{d}^2y}{\mathrm{d}t^2}(t)-\left(\frac{\mathrm{d} y}{\mathrm{d}t}(t)\right)^2=4||(u,v)||_{L^2}^2||(u_t,v_t)||_{L^2}^2-4||(u,v)||_{L^2}^2K(t)-4\big\langle(u(t),v(t)),(u_t(t),v_t(t))\big\rangle^2_{(L^2\times L^2)^2},
	\end{equation*}
	we use the assumption $K(t)<0$ and apply Cauchy-Schwarz inequality to conclude that
	\begin{equation*}
		\frac{\mathrm{d} P}{\mathrm{d}t}(t)>0,
	\end{equation*}
	which is just the second part of the stated results in \textit{(i)}. Consequently $P(t)$ is increasing in $t\in[0,T]$.
	
	It remains to show the last item \textit{(iv)}.  On the one hand, by fundamental theorem of calculus, we have for any $t\in [0,T]$
	\begin{equation}\label{eq:306}
		y(0)-y(t)=-\int_0^t\frac{\mathrm{d}y}{\mathrm{d}\tau}(\tau)\mathrm{d}\tau\leq -t\frac{\mathrm{d} y}{\mathrm{d}t}(0),
	\end{equation}
	where we have used the non-negativeness and the monotonicity of $y$ on $[0,T]$. On the other hand, by first substituting
	\eqref{eq:ene:d1} into the defining expression \eqref{eq:proj:int} of $P$ and then evaluating at $t=0$ in the resulted expression, we get
	\begin{equation}\label{eq:305}
		2P(0)= \frac{1}{2}\frac{\left(\frac{\mathrm{d}y}{\mathrm{d}t}(0)\right)^2}{y(0)}.
	\end{equation}
	Summing \eqref{eq:306} and \eqref{eq:305}, we get
	\begin{equation*}
			y(0)-y(t)+2P(0)\leq \frac{1}{2}\frac{\left(\frac{\mathrm{d} y}{\mathrm{d}t}(0)\right)^2}{y(0)}-t\frac{\mathrm{d} y}{\mathrm{d}t}(0).
	\end{equation*}
	From this, we see that for $t\geq t_b:=\frac{1}{2}\frac{\frac{d y}{dt}(0)}{y(0)}$, the value of the term before the sign `$\leq$' is negative, which is just the desired result in \textit{(iv)}. This finishes the proof of Lemma \ref{lem:skg}.
\end{proof}

We next affirm that the assumption $K(t)<0$ in Lemma \ref{lem:skg} is indeed true.
\begin{lemma}\label{lem:skg:1}
	Under the assumption of \textbf{Theorem \ref{thm:skg:int}}, $K(t)$ satisfies
	\begin{enumerate}[(i)]
		\item $K(t)<0$ for $t\in[0,T_{max})$;
		\item if in addition $t_b=\frac{\langle(u_0,v_0),(u_1,v_1)\rangle_{(L^2\times L^2)^2}}{||u_0||_{L^2}^2+||v_0||_{L^2}^2}<T_{max}$, then for any $t\in [t_b,T_{max})$, the value of $y(0)-y(t)+2P(0)$ is always negative and
		\begin{equation*}
			K(t)\leq -2\|(u_t,v_t)\|_{L^2\times L^2}^2.
		\end{equation*}
	\end{enumerate}
\end{lemma}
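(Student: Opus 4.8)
The plan is to argue by contradiction, exactly as sketched in the introduction's four-step outline. For part \textit{(i)}, suppose that $K$ is not strictly negative on the whole maximal interval. Since $K(0)<0$ (this follows from assumption \textit{(iii)} of Theorem \ref{thm:skg:int}: rewriting $K(0)$ via the identity $\frac14 K[u,v]=J[u,v]-E^1[u,v]$ — more directly, from the relation $\frac14 K(0) = E(0) - \frac12\|(u_1,v_1)\|_{L^2\times L^2}^2 - \frac14 y(0) + \text{(gradient terms)}$, or more cleanly from \eqref{eq:equa:1} referenced in the introduction) and since $K(\cdot)$ is continuous on $[0,T_{max})$ by Proposition \ref{prop:cauchy}, there is a \emph{first} time $t_0\in(0,T_{max})$ with $K(t_0)=0$ and $K(t)<0$ on $[0,t_0)$. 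Applying Lemma \ref{lem:skg} on $[0,t_0]$ (whose hypothesis $K<0$ holds on $[0,t_0)$, and by continuity the conclusions extend to the closed interval), I get that $y$, $\frac{\mathrm d y}{\mathrm d t}$ and $P$ are all strictly increasing on $[0,t_0]$; in particular $y(t_0)>y(0)$ and $P(t_0)>P(0)$.

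The key computational step is to re-express $K(t_0)$ in terms of $E(t_0)$ using the identity \eqref{eq:ene:1} (the $P$-adjusted form of the energy), which is valid since $y(t_0)>0$. Writing
\begin{equation*}
E(t_0)=\tfrac12\big(P(t_0)+y(t_0)\big)+\tfrac12\big(\text{nonneg.\ kinetic remainder}\big)+\tfrac12\big(\|\nabla u\|_{L^2}^2+\|\nabla v\|_{L^2}^2\big)-\tfrac14\!\int_{\mathbb R^3}\!\big(u^4+v^4+2\beta u^2v^2\big)
\end{equation*}
and subtracting a suitable multiple of $K(t_0)$ (which packages the $H^1$ and quartic terms) yields an inequality of the shape
\begin{equation*}
\tfrac14 K(t_0)\le \tfrac14\big(y(0)-y(t_0)\big)+\tfrac12\big(P(0)-P(t_0)\big),
\end{equation*}
after invoking energy conservation $E(t_0)=E(0)$ and the hypothesis $\frac14 y(0)+\frac12 P(0)\ge E(0)$. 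The right-hand side is strictly negative by the monotonicity just established, so $0=\frac14 K(t_0)<0$, a contradiction. Hence $K(t)<0$ for all $t\in[0,T_{max})$, proving \textit{(i)}.

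For part \textit{(ii)}, once \textit{(i)} is in hand the hypothesis $K<0$ of Lemma \ref{lem:skg} holds on every $[0,T]$ with $T<T_{max}$, so conclusion \textit{(iv)} of that lemma gives $y(0)-y(t)+2P(0)\le \frac12\frac{(\frac{\mathrm d y}{\mathrm d t}(0))^2}{y(0)}-t\frac{\mathrm d y}{\mathrm d t}(0)$, whose right side is strictly negative for $t>t_b=\frac12\frac{\frac{\mathrm d y}{\mathrm d t}(0)}{y(0)}=\frac{\langle(u_0,v_0),(u_1,v_1)\rangle_{(L^2\times L^2)^2}}{\|u_0\|_{L^2}^2+\|v_0\|_{L^2}^2}$; this is the claimed negativity of $y(0)-y(t)+2P(0)$ on $[t_b,T_{max})$. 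Finally, running the $E(t)$-rewriting of $K(t)$ as above but at a general time $t\ge t_b$ (using $P(t)\ge P(0)$ from Lemma \ref{lem:skg}\,\textit{(ii)} and keeping the kinetic remainder $2\|(u_t,v_t)\|_{L^2\times L^2}^2$ on the favorable side rather than discarding it) produces $K(t)\le 2P(0)+y(0)-y(t)-2\|(u_t,v_t)\|_{L^2\times L^2}^2$, and combining with $2P(0)+y(0)-y(t)\le 0$ gives $K(t)\le -2\|(u_t,v_t)\|_{L^2\times L^2}^2$, as desired.

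The main obstacle is the bookkeeping in the $E(t)$-to-$K(t)$ conversion: one must carefully split the kinetic energy $\|u_t\|_{L^2}^2+\|v_t\|_{L^2}^2$ into the $P(t)$-part plus the orthogonal remainder (the two squared-norm terms in \eqref{eq:ene:1}), and then choose the right linear combination of $E(t)$ and $K(t)$ so that the $H^1$ and quartic pieces cancel and only $P(t)$, $y(t)$, and a manifestly nonnegative leftover survive. Getting the coefficients $\tfrac14$ and $\tfrac12$ to line up with hypothesis \textit{(iii)} is where all the care is needed; everything else is continuity, energy conservation, and the monotonicity already furnished by Lemma \ref{lem:skg}.
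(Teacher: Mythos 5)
Your proposal is correct and follows essentially the same route as the paper: establish $K(0)<0$ from assumption \textit{(iii)}, run a first-crossing-time contradiction using the identity \eqref{eq:equa:1} together with the monotonicity of $y$ and $P$ from Lemma \ref{lem:skg}, and then reuse the same identity plus Lemma \ref{lem:skg}\,\textit{(iv)} for part \textit{(ii)}. The only point you gloss over is why $K(0)$ is \emph{strictly} negative: the paper gets $\tfrac14 K(0)\le -\tfrac14\|\nabla(u_0,v_0)\|_{L^2\times L^2}^2$ and then uses that a nonzero element of $L^2(\mathbb{R}^3)$ cannot have vanishing gradient, so assumption \textit{(i)} forces $\|\nabla(u_0,v_0)\|_{L^2\times L^2}>0$.
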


\begin{proof}
	We first show $K(0)<0$. By definitions of $K(t)$ and $E(t)$, we observe
	\begin{eqnarray*}
		\frac{1}{4}K(0)=E(0)-\frac{1}{2}\frac{\big\langle(u_0,v_0),(u_1,v_1)\big\rangle^2_{(L^2\times L^2)^2}}{\|(u_0,v_0)\|_{L^2\times L^2}^2}-\frac{1}{2}\left\|(u_1,v_1)-\frac{\big\langle(u_0,v_0),(u_1,v_1)\big\rangle^2_{(L^2\times L^2)^2}}{\|(u_0,v_0)\|_{L^2\times L^2}^2}(u_0,v_0)\right\|_{L^2\times L^2}^2
	\end{eqnarray*}
\begin{equation*}
	\ \ \ \ \ \ \ \ \ \ -\frac{1}{4}\left(\|\nabla (u_0,v_0)\|_{L^2\times L^2}^2+\|(u_0,v_0)\|_{L^2\times L^2}^2\right).
\end{equation*}
	It then follows from the third assumption in \textbf{Theorem \ref{thm:skg:int}} that
	\begin{equation*}
		\frac{1}{4}K(0)\leq-\frac{1}{2}\left\|(u_1,v_1)-\frac{\big\langle(u_0,v_0),(u_1,v_1)\big\rangle^2_{(L^2\times L^2)^2}}{\|(u_0,v_0)\|_{L^2\times L^2}^2}(u_0,v_0)\right\|_{L^2\times L^2}^2-\frac{1}{4}\|\nabla (u_0,v_0)\|_{L^2\times L^2}^2.
	\end{equation*}
	This implies, together with the first assumption $\| (u_0,v_0)\|_{L^2\times L^2}^2>0$ and that
		{$\|\nabla (u_0,v_0)\|_{L^2\times L^2}^2>0$}, that $K(0)<0$.
	
	We next show $K(t)<0$ for {$t\in[0,T_{max})$}. {It follows from Proposition \ref{prop:cauchy} that $K(t)$ is continuous in the time variable $t$}.  We argue by contradiction, assuming that there exists $t_0$, so that $K(t_0)=0$ and $K(t)<0$ for $t\in [0,t_0)$. Observe that $t_0>0$, thanks to the continuity of $K(t)$ and $K(0)<0$. This allows us to apply Lemma \ref{lem:skg} on $[0,t_0]$ to obtain
	\begin{equation*}
		\left\|(u(t),v(t))\right\|_{L^2\times L^2} > 0, \forall t\in [0,t_0).
	\end{equation*}
	Thus $P(t)$ is well-defined for any $ t\in [0,t_0)$. By invoking to the defintions of $K(t)$ and $E(t)$, we have
	\begin{equation}\label{eq:equa:1}
		\frac{1}{4}K(t_0)=E(t_0)-\frac{1}{2}\frac{\big\langle(u(t_0),v(t_0)),(u_t(t_0),v_t(t_0))\big)\rangle^2_{(L^2\times L^2)^2}}{\|(u(t_0),v(t_0))\|_{L^2\times L^2}^2}-\frac{1}{4}\left(\|(u(t_0),v(t_0))\|_{L^2\times L^2}^2+\|\nabla (u(t_0),v(t_0))\|_{L^2\times L^2}^2\right)
	\end{equation}
	\begin{equation*}
		\ \ \ -		\frac{1}{2}\left\|(u_t(t_0),v_t(t_0))-\frac{\big\langle(u(t_0),v(t_0)),(u_t(t_0),v_t(t_0))\big\rangle_{(L^2\times L^2)^2}}{\|(u(t_0),v(t_0))\|_{L^2\times L^2}^2}(u(t_0),v(t_0))\right\|_{L^2\times L^2}^2.
	\end{equation*}
	This, together with the energy conservation law and the third assumption in Theorem \ref{thm:skg:int}, infers that
	\begin{equation}\label{eq:equa:2}
		\frac{1}{4}K(t_0)\leq \frac{1}{2}\frac{\big\langle(u_0,v_0),(u_1,v_1)\big\rangle^2_{(L^2\times L^2)^2}}{\|(u_0,v_0)\|_{L^2\times L^2}^2}+\frac{1}{4}\|(u_0,v_0)\|_{L^2\times L^2}^2-\frac{1}{2}\frac{\big\langle(u(t_0),v(t_0)),(u_t(t_0),v_t(t_0))\big\rangle^2_{(L^2\times L^2)^2}}{\|(u(t_0),v(t_0))\|_{L^2\times L^2}^2}
	\end{equation}
	\begin{equation*}
		-\frac{1}{4}\|(u(t_0),v(t_0))\|_{L^2\times L^2}^2
		=\frac{1}{4}\big(y(0)-y(t_0)\big)+\frac{1}{2}\big(P(0)-P(t_0)\big),
	\end{equation*}
	{which is strictly negative since $y(t)$ and $P(t)$ are strictly increasing in $t\in[0,t_0]$ by \textit{(ii)} in Lemma \ref{lem:skg}}. This contradicts the assumption that $K(t_0)=0$. So we have that  $K(t)<0$ for any $t\in[0,T_{max})$.
	
	It remains to show \textit{(ii)}. It follows from \eqref{eq:equa:1} and the third assumption in \textbf{Theorem \ref{thm:skg:int}} that for any $t\in [t_b,T_{max})$
	\begin{equation*}
		K(t)\leq 2\frac{\big\langle(u_0,v_0),(u_1,v_1)\big\rangle^2_{(L^2\times L^2)^2}}{\|(u_0,v_0)\|_{L^2\times L^2}^2}+\|(u_0,v_0)\|_{L^2\times L^2}^2-2\|(u_t(t),v_t(t))\|_{L^2\times L^2}^2-\|(u(t),v(t))\|_{L^2\times L^2}^2
	\end{equation*}
	\begin{equation*}
		= 2P(0)+y(0)-y(t)-2\|(u_t(t),v_t(t))\|_{L^2\times L^2}^2.
	\end{equation*}
	This is
	\begin{equation*}
		\leq -2\|(u_t(t),v_t(t))\|_{L^2\times L^2}^2
	\end{equation*}
	by \textit{(iv)} in Lemma \ref{lem:skg}. This completes the proof of Lemma \ref{lem:skg:1}.
\end{proof}

We are now ready to give
\begin{proof}[Proof of {$\mathbf{Theorem}$ \ref{thm:skg:int}}] 	We argue by contradiction, assuming $T_{max}=\infty$. By Lemma \ref{lem:skg:1}, we have $K(t)<0$ for any $t\in[0,\infty)$. This in turn allows us to use Lemma \ref{lem:skg} to conclude that
	\begin{equation*}
		K(t)\leq -2\|(u_t,v_t)\|_{L^2\times L^2}^2,\  \forall t\in [t_b,\infty).
	\end{equation*}
	Substituting this into the differential expression \eqref{eq:ene:d2}, it follows that for $t\in [t_b,\infty)$
	\begin{equation*}
		\frac{\mathrm{d}^2}{\mathrm{d}t^2}y(t)\geq 2\|(u_t,v_t)\|_{L^2\times L^2}^2+4\|(u_t,v_t)\|_{L^2\times L^2}^2=6\|(u_t,v_t)\|_{L^2\times L^2}^2.
	\end{equation*}
	By multiplying both sides by $y(t)$ and using Cauchy-Schwarz inequality, we see that $y(t)$ satisfies the differential inequality
	\begin{equation*}	y(t)\frac{\mathrm{d}^2}{\mathrm{d}t^2}y(t)-\frac{3}{2}\left(\frac{\mathrm{d}}{\mathrm{d}t}y(t)\right)^2 \geq 6\left(\|(u,v)\|_{L^2\times L^2}^2\|(u_t,v_t)\|_{L^2\times L^2}^2-\langle(u,u_t),(v,v_t)\rangle^2_{(L^2\times L^2)^2}\right)\geq 0,\ \ \forall t\in[t_b,\infty).
	\end{equation*}
	
	At time $t=t_b$, it follows \textbf{Lemma \ref{lem:skg}} that $y(t_b)>0$ and $\frac{\mathrm{d}}{\mathrm{d}t}y(t_b)>0$. We can then apply \textbf{Lemma \ref{lem:levine:1}} with
	\begin{equation*}
		\psi(t)=y(t), \gamma=\frac{3}{2}>1
	\end{equation*}
	to conclude that the solution $u(t,x)$ blows up in finite time.
\end{proof}

\section{Proof of Theorem \ref{thm:payne-sattinger:int}}\label{sect:proof:thm3}

In this section, we {are going to prove \textbf{Theorem} \ref{thm:payne-sattinger:int}}. For $(u_0,u_1),(v_0,v_1)\in \mathcal{H}$, \textbf{Proposition \ref{prop:cauchy}} implies that \eqref{eq:skg:int} admits a local in time solution $u(t)$ with the initial data $(u_0,u_1), (v_0,v_1)$.

We shall give some sufficient conditions on the initial data $(u_0,u_1), (v_0,v_1)$ so that the local solution can not be extended to a global one. The criterion we give here is in the spirit of Payne and Sattinger, with the help of stationary solutions to \eqref{eq:skg:int}.

\subsection{Existence of nontrivial stationary solutions}
Recall if  $(\phi,\psi)$ is a nontrivial stationary solution of \eqref{eq:skg:int}, then $(\phi,\psi)\in H^1\times H^1$ satisfies
\begin{equation}\label{eq:SKG:sta}
	\left\{
	\begin{split}
		-\Delta \phi+\phi&=\phi^3+\beta\psi^2\phi,\\
		-\Delta \psi+\psi&=\psi^3+\beta\phi^2\psi.
	\end{split}
	\right.
\end{equation}
This is indeed the Euler-Lagrangian equation of the functional
\begin{equation}\label{eq:2601}
	J[\phi,\psi]:=\frac{1}{2}\int_{\mathbb{R}^3}\left[|\nabla\phi|^2+|\phi|^2+|\nabla\psi|^2+|\psi|^2\right]\mathrm{d}x
 -\frac{1}{4}\int_{\mathbb{R}^3}\left[|\phi|^4+|\psi|^4+2\beta\phi^2\psi^2\right]\mathrm{d}x.
\end{equation}

{
For $(\phi,\psi)\in H^1\times H^1$, we let
\begin{equation}\label{eq:2602}
	j_{(\phi,\psi)}(\lambda):=J[\lambda\phi,\lambda\psi].
\end{equation}
for $\lambda>0$. About this functional, we have
\begin{lemma}\label{lem:mount:pass}
	If $(\phi,\psi)\in H^1\times H^1$ is not the zero element and $\beta> -1$, then
	\begin{enumerate}[(i)]
		\item there exists a unique number $\lambda_0>0$ so that $j_{(\phi,\psi)}(\lambda)>0$ for $\lambda\in(0,\lambda_0)$, while $j_{(\phi,\psi)}(\lambda)<0$ for $\lambda\in(\lambda_0,\infty)$.
		\item furthermore, we have
		$\lim_{\lambda\rightarrow0+}j_{(\phi,\psi)}(\lambda)=0$, and $\lim_{\lambda\rightarrow\infty}j_{(\phi,\psi)}(\lambda)=-\infty$.
	\end{enumerate}
\end{lemma}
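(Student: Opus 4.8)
The plan is to compute $j_{(\phi,\psi)}(\lambda)$ explicitly and read off the claimed behaviour from the sign structure of its two terms. Writing
\begin{equation*}
	A:=\int_{\mathbb{R}^3}\left[|\nabla\phi|^2+|\phi|^2+|\nabla\psi|^2+|\psi|^2\right]\mathrm{d}x, \qquad B:=\int_{\mathbb{R}^3}\left[|\phi|^4+|\psi|^4+2\beta\phi^2\psi^2\right]\mathrm{d}x,
\end{equation*}
the scaling $(\phi,\psi)\mapsto(\lambda\phi,\lambda\psi)$ pulls out a $\lambda^2$ from the quadratic part and a $\lambda^4$ from the quartic part, so that
\begin{equation*}
	j_{(\phi,\psi)}(\lambda)=\tfrac{1}{2}A\,\lambda^2-\tfrac14 B\,\lambda^4=\lambda^2\left(\tfrac12 A-\tfrac14 B\,\lambda^2\right).
\end{equation*}
The first thing to establish is $A>0$ and $B>0$. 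Since $(\phi,\psi)\neq(0,0)$, at least one of $\phi,\psi$ is a nonzero element of $H^1$, so $A>0$ is immediate. For $B$, I would split into cases $\beta\ge 0$ and $-1<\beta<0$: when $\beta\ge0$ each term in $B$ is nonnegative and $B\ge\|\phi\|_{L^4}^4+\|\psi\|_{L^4}^4>0$; when $-1<\beta<0$ use the pointwise inequality $2|\beta|\phi^2\psi^2\le |\beta|(\phi^4+\psi^4)$, which gives $B\ge(1-|\beta|)(\|\phi\|_{L^4}^4+\|\psi\|_{L^4}^4)=(1+\beta)(\|\phi\|_{L^4}^4+\|\psi\|_{L^4}^4)>0$, using $\beta>-1$ and that $\phi,\psi$ are not both zero (note $H^1(\mathbb{R}^3)\hookrightarrow L^4$, so these $L^4$ norms are finite). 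This is the one place the hypothesis $\beta>-1$ is genuinely used, and it is the crux of the argument; everything after it is elementary calculus.

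Given $A,B>0$, set $\lambda_0:=\sqrt{2A/B}>0$. From $j_{(\phi,\psi)}(\lambda)=\lambda^2\left(\tfrac12 A-\tfrac14 B\lambda^2\right)$ the factor $\lambda^2$ is positive for $\lambda>0$, and the bracket is positive exactly when $\lambda^2<2A/B$, i.e. $\lambda<\lambda_0$, and negative when $\lambda>\lambda_0$. This gives statement (i), including uniqueness of $\lambda_0$ (the bracket is strictly decreasing in $\lambda$, so it vanishes at exactly one positive value). For statement (ii), the formula gives $j_{(\phi,\psi)}(\lambda)=\tfrac12 A\lambda^2-\tfrac14 B\lambda^4\to 0$ as $\lambda\to0^+$ since both terms vanish, and $j_{(\phi,\psi)}(\lambda)=-\tfrac14 B\lambda^4(1+o(1))\to-\infty$ as $\lambda\to\infty$ because $B>0$ and the quartic term dominates.

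So the proof is essentially: (1) scale to get the explicit quadratic-in-$\lambda^2$ formula; (2) prove $A>0$ trivially and $B>0$ via the case split on the sign of $\beta$, invoking $\beta>-1$ and the Sobolev embedding $H^1\hookrightarrow L^4$ on $\mathbb{R}^3$ in the negative case; (3) define $\lambda_0=\sqrt{2A/B}$ and read off (i) and (ii) from the sign of the two factors. The only nontrivial point — and the one I would write out carefully — is the positivity of $B$ when $\beta\in(-1,0)$, since for $\beta\le-1$ the coupling term could cancel the diagonal terms and the lemma genuinely fails.
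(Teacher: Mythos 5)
Your proof is correct and follows essentially the same route as the paper: write $j_{(\phi,\psi)}(\lambda)=\tfrac{\lambda^2}{2}A-\tfrac{\lambda^4}{4}B$, show $B>0$ using $\beta>-1$, and set $\lambda_0=\sqrt{2A/B}$. Your verification that $B>0$ (via the pointwise bound $B\geq(1+\beta)\left(\|\phi\|_{L^4}^4+\|\psi\|_{L^4}^4\right)$ and the observation that a nonzero $H^1$ element has nonzero $L^4$ norm) is in fact more direct than the paper's, which reaches the same conclusion by contradiction through a Fourier-transform argument showing $\|\phi\|_{L^4}=0$ forces $\|\phi\|_{H^1}=0$.
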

\begin{remark}
	This result is indeed the analogue of the scalar version \cite[Lemma 2.8]{nakanishischlag2011}, see also \cite{nakanishischlag2011jde}.
\end{remark}
\begin{proof}
	We begin the proof by claiming
	\begin{equation}\label{eq:2603}
		\int_{\mathbb{R}^3}\left[|\phi|^4+|\psi|^4+2\beta\phi^2\psi^2\right]\mathrm{d}x\neq 0.
	\end{equation}
	Indeed, if this was not the case, we would have $\int_{\mathbb{R}^3}\phi^4\mathrm{d}x=\int_{\mathbb{R}^3}\psi^4\mathrm{d}x=0$, thanks to the assumption $\beta>-1$. We are first going to see what happens to $\phi$. Since $\phi\in L^2$, for any $\delta>0$, there exists $R>0$ such that $\|\phi\|_{L^2(|\cdot|\geq R)}<\delta$. On the bounded domain $\{x\in\mathbb{R}^3:|x|<R\}$, we use H\"{o}lder inequality to obtain
	\begin{equation}
		\|\phi\|_{L^2(|\cdot|< R)}\leq \left|\{x\in\mathbb{R}^3:|x|<R\}\right|^{1/2}\|\phi\|_{L^4(|\cdot|<R)},
	\end{equation}
	which is $0$, thanks to the assumption $\int_{\mathbb{R}^3}\phi^4\mathrm{d}x=0$. By combining these two results, we see that $\|\phi\|_{L^2(\mathbb{R}^3)}<\delta$. Since $\delta$ can be taken to be arbitrarily small, $\|\phi\|_{L^2(\mathbb{R}^3)}=0$. To proceed, we shall show $\|\nabla\phi\|_{L^2}=0$. For this we invoke the notion of Fourier transform. Let $\hat{\phi}(\xi)$ be the Fourier transform of $\phi$, then $\|\hat{\phi}\|^2_{L^2}=\|\phi\|^2_{L^2}=0$ and $ \|\phi\|^2_{H^1}=\|\xi\hat{\phi}\|^2_{L^2}+\|\hat{\phi}\|^2_{L^2}$, thanks to our assumption that $\phi\in H^1$. For $M$ a parameter to be specified later, we decompose $$\|\xi\hat{\phi}\|^2_{L^2}=\|\xi\hat{\phi}\|^2_{L^2(\{|\xi|\leq M\})}+\|\xi\hat{\phi}\|^2_{L^2(\{|\xi|> M\})}.$$
	Notice that $\|\xi\hat{\phi}\|^2_{L^2(\{|\xi|\leq M\})}\leq M^2\|\hat{\phi}\|^2_{L^2}=0, $ thus $ \|\nabla \phi\|^2_{L^2}=\|\xi\hat{\phi}\|^2_{L^2(\{|\xi|> M\})}$. Since $\phi\in H^1$,  for any $\delta>0$, we have $\|\xi\hat{\phi}\|^2_{L^2(\{|\xi|> M\})}\leq \delta$ for $M$ large enough. Thus $\|\nabla\phi\|^2_{L^2}=0$. Therefore, we get $\|\phi\|_{H^1}=0$. Applying the same argument to $\psi$, we can obtain $\|\psi\|_{H^1}=0$. Combining this with $\|\phi\|_{H^1}=0$, we see that $(\phi,\psi)$ is a zero element in $H^1\times H^1$. But this contradicts to the assumption. Therefore we have \eqref{eq:2603}.
	
	By the defining formulas \eqref{eq:2601} and \eqref{eq:2602}, we have
	\begin{equation}\label{eq:2604}
		j_{(\phi,\psi)}(\lambda)=\frac{\lambda^2}{2}\left(\|\phi\|^2_{H^1}+\|\psi\|^2_{H^1}\right)-\frac{\lambda^4}{4}\int_{\mathbb{R}^3}\left[|\phi|^4+|\psi|^4+2\beta\phi^2\psi^2\right]\mathrm{d}x.
	\end{equation}
	Since $\int_{\mathbb{R}^3}\left[|\phi|^4+|\psi|^4+2\beta\phi^2\psi^2\right]\mathrm{d}x\neq 0$ (as in \eqref{eq:2603}), we take $\lambda_0$ to be
	\begin{equation*}
		\lambda_0:=\sqrt{\frac{2\left(\|\phi\|^2_{H^1}+\|\psi\|^2_{H^1}\right)}{\int_{\mathbb{R}^3}\left[|\phi|^4+|\psi|^4+2\beta\phi^2\psi^2\right]\mathrm{d}x}}.
	\end{equation*}
Then $\lambda_0>0$. For $\lambda\in(0,\lambda_0)$, we have $j_{(\phi,\psi)}(\lambda)>0$. But for $\lambda\in(\lambda_0,\infty)$, we can check $j_{(\phi,\psi)}(\lambda)<0$. This finishes the proof of $(i)$.

It remains to show $(ii)$.  Since the two terms of $j_{(\phi,\psi)}(\lambda)$ have a common factor $\lambda^2$, we get $	\lim_{\lambda\rightarrow0+}j_{(\phi,\psi)}(\lambda)=0$. Notice that $j_{(\phi,\psi)}(\lambda)$ is indeed a polynomial in $\lambda$. By \eqref{eq:2603}, the coefficient of the highest order term is strictly negative. Thus as $\lambda$ tends to $+\infty$, $j_{(\phi,\psi)}(\lambda)$ tends to $-\infty$. This finishes the proof of $(ii)$ and completes the proof of Lemma \ref{lem:mount:pass}.
\end{proof}
}

By Lemma \ref{lem:mount:pass}, $j_{(\phi,\psi)}(\lambda)$ attains its maximum at some finite $\lambda$. This suggests us to renormalize $(\phi,\psi)$ so that at $\lambda=\lambda_{\ast}:=1$ we have
\begin{equation*}
	\partial_\lambda j_{(\phi,\psi)}(\lambda)\big|_{\lambda=\lambda_{\ast}}=0.
\end{equation*}
The left hand side is indeed the expression $K[\phi,\psi]$ defined by\eqref{eq:3000}.
 {Furthermore, the functional $K$ enjoys as well the mountain-pass geometry (see \cite{struwe2000} for detailed notions).}
 \begin{lemma}\label{lem:K:mountainpass}
 	If $(\phi,\psi)\in H^1\times H^1$ is not the zero element and $\beta> -1$, then
 	\begin{enumerate}[(i)]
 		\item there exists a unique number $\lambda_0>0$ so that $K[\lambda\phi,\lambda\psi]>0$ for $\lambda\in(0,\lambda_0)$, while $K[\lambda\phi,\lambda\psi]<0$ for $\lambda\in(\lambda_0,\infty)$;
 		\item  furthermore, we have $\lim_{\lambda\rightarrow0+}K[\lambda\phi,\lambda\psi]=0$ and $\lim_{\lambda\rightarrow\infty}K[\lambda\phi,\lambda\psi]=-\infty$.
 	\end{enumerate}
 \end{lemma}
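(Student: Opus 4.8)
The plan is to exploit the fact that, exactly as for $j_{(\phi,\psi)}$, the map $\lambda\mapsto K[\lambda\phi,\lambda\psi]$ is an explicit polynomial in $\lambda$. Substituting $(\lambda\phi,\lambda\psi)$ into \eqref{eq:3000} and pulling out the powers of $\lambda$ gives
\begin{equation*}
	K[\lambda\phi,\lambda\psi]=\lambda^2\left(\|\phi\|_{H^1}^2+\|\psi\|_{H^1}^2\right)-\lambda^4\int_{\mathbb{R}^3}\left[|\phi|^4+|\psi|^4+2\beta\phi^2\psi^2\right]\mathrm{d}x.
\end{equation*}
Comparing with \eqref{eq:2604}, this is precisely $\lambda\,\partial_\lambda j_{(\phi,\psi)}(\lambda)$, so everything is already implicit in the proof of Lemma \ref{lem:mount:pass}; but it is just as quick to argue directly from the displayed identity.

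First I would record that the coefficient $A:=\|\phi\|_{H^1}^2+\|\psi\|_{H^1}^2$ is strictly positive because $(\phi,\psi)$ is not the zero element, and that the coefficient $B:=\int_{\mathbb{R}^3}[|\phi|^4+|\psi|^4+2\beta\phi^2\psi^2]\,\mathrm{d}x$ is strictly positive as well. Indeed, for $\beta\geq0$ this is clear, while for $-1<\beta<0$ the elementary bound $2\beta\phi^2\psi^2\geq-|\beta|(\phi^4+\psi^4)$ gives $B\geq(1-|\beta|)\int_{\mathbb{R}^3}(\phi^4+\psi^4)\,\mathrm{d}x>0$, the last integral being nonzero because a nonzero $H^1$ function has nonzero $L^4$ norm; this last point is exactly the content of \eqref{eq:2603} as established inside the proof of Lemma \ref{lem:mount:pass}.

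With $A,B>0$ fixed, the conclusions follow from elementary calculus on $g(\lambda):=\lambda^2 A-\lambda^4 B=\lambda^2(A-\lambda^2 B)$. For $\lambda>0$ the factor $\lambda^2$ is positive, so $\mathrm{sgn}\,g(\lambda)=\mathrm{sgn}(A-\lambda^2 B)$, which is positive exactly for $\lambda<\lambda_0$ and negative exactly for $\lambda>\lambda_0$, where $\lambda_0:=\sqrt{A/B}$; this is the same $\lambda_0$ produced in Lemma \ref{lem:mount:pass}, and its uniqueness is immediate. This proves $(i)$. For $(ii)$, the common factor $\lambda^2$ forces $g(\lambda)\to0$ as $\lambda\to0+$, and the leading coefficient $-B<0$ forces $g(\lambda)\to-\infty$ as $\lambda\to\infty$.

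There is essentially no genuine obstacle here; the only step requiring a word is the strict positivity of $B$ in the range $\beta\in(-1,0)$, and that has already been disposed of in the proof of Lemma \ref{lem:mount:pass}. As an alternative to the direct computation, one may simply invoke the identity $K[\lambda\phi,\lambda\psi]=\lambda\,\partial_\lambda j_{(\phi,\psi)}(\lambda)$ together with Lemma \ref{lem:mount:pass}, noting that for $\lambda>0$ the extra factor $\lambda$ affects neither the sign nor the two limiting values.
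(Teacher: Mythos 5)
Your proof is correct and is exactly the argument the paper intends: the paper itself only remarks that Lemma \ref{lem:K:mountainpass} ``can be proved by the same argument as that in Lemma \ref{lem:mount:pass}'' and omits the details, which you have supplied (strict positivity of $A$ and $B$, then elementary sign analysis of $\lambda^2A-\lambda^4B$). One small inaccuracy in an aside: the sign-change point you obtain is $\lambda_0=\sqrt{A/B}$, which is \emph{not} the same as the $\lambda_0=\sqrt{2A/B}$ produced in Lemma \ref{lem:mount:pass} (it is instead the maximizer of $j_{(\phi,\psi)}$, i.e.\ the zero of $\partial_\lambda j_{(\phi,\psi)}$); this does not affect the validity of your argument.
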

This lemma can be proved by the same argument as that in Lemma \ref{lem:mount:pass}. So we omit details. We emphasize here that Lemma \ref{lem:K:mountainpass} will play an important role in the proof of Lemma \ref{lem:keyvariation}.

 We are in a position to find {the lowest mountain pass level} for \eqref{eq:SKG:sta}. If $\beta\geq 0$, then we apply \cite[\textbf{Proposition 3.5}]{sirakov07} with $\lambda_1= \mu_1=\mu_2=1$ to conclude that \eqref{eq:SKG:sta} has a nontrivial solution $\left(\tilde{\phi},\tilde{\psi}\right)$, each component of which is non-negative, spherically symmetric and decreasing. Since $\left(\tilde{\phi},\tilde{\psi}\right)$ is nonzero and $\beta$ is assumed to be non-negative,
the quantity, known as the lowest mountain pass level,
\begin{equation*}
	d:=\min\left\{\left. j_{(\phi,\psi)}(1)\ \right|\ K[\phi,\psi]=0,(\phi,\psi)\neq(0,0)\right\}=j_{(\tilde{\phi},\tilde{\psi})}(1)
\end{equation*}
equaling to
\begin{equation*}
	\frac{1}{4}\int_{\mathbb{R}^3}\left[|\tilde \phi|^4+|\tilde \psi|^4+2\beta\tilde{\phi}^2\tilde{\psi}^2\right] \mathrm{d}x,
\end{equation*}
{is strictly positive}.

\begin{remark}
	Indeed, in \cite{sirakov07}, Sirakov showed the existence of nontrivial solutions for a larger family of systems containing \eqref{eq:SKG:sta}. Nevertheless, several other authors generalized this result to more general systems, see \cite{pengwangwang2019}, \cite{weiwu2020},\cite{tavares2011},\cite{soavetavares2016}, \cite{chenzou2012} and \cite{chenzou2015}. What's more, some authors even obtained the uniqueness of these solutions, see for instance \cite{ikoma2009}, \cite{anchernshi2018},\cite{linwei} and \cite{linwei08}.
\end{remark}

%______________________________________________________________________________________________________________________

	In order to use $d$ to give some criterion of the blowing up for solution of \eqref{eq:skg:int}, we need the characterization of $d$ from the view point of the constrained minimization (see \cite{nakanishischlag2011} and \cite{nakanishischlag2011jde} for the scalar case). More precisely, we define the functional
	\begin{equation*}
		E^1[u_1,u_2]:= \frac{1}{4}\left(\|u_1\|_{H^1}^2+\|u_2\|_{H^1}^2\right)
	\end{equation*}
	for $\left(u_1,u_2\right)\in H^1\times H^1$. Define the constrained set to be
	\begin{equation}\label{Np}
		\mathcal{N}':=\left\{\left.(u_1,u_2)\in H^1\times H^1\right|,
		(u_1,u_2)\not= (0,0),
		K[u_1,u_2]\leq 0   \right\}.
	\end{equation}
	We will consider the attainability of the quantity
	\begin{equation}\label{vp:n}
		d_1:=\inf_{(u_1,u_2)\in \mathcal{N}'} E^1[u_1,u_2].
	\end{equation}

	Indeed, this is affirmed in the following result.
	\begin{lemma}\label{lem:keyvariation}	Fix $\beta\geq 0$. Then $d_1$
		is attained by some element in $\mathcal{N}'$. Moreover, $d_1=d$.
	\end{lemma}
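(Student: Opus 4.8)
The plan is to prove the two assertions—attainability of $d_1$ and the identity $d_1=d$—in sequence, with the attainability step carrying the technical weight. First I would unwind the relationship between $E^1$, $J$, and $K$: for any $(u_1,u_2)\in H^1\times H^1$ one has $J[u_1,u_2]=\tfrac14 K[u_1,u_2]+E^1[u_1,u_2]$, so on the set $\{K=0\}$ the functionals $J$ and $E^1$ coincide; hence $d_1\le d$ is immediate because any $(\phi,\psi)$ with $K[\phi,\psi]=0$, $(\phi,\psi)\ne(0,0)$ lies in $\mathcal N'$ and realizes $E^1[\phi,\psi]=J[\phi,\psi]\ge d$—wait, that gives $d_1\le d$ the wrong way; rather the competitor for $d$ that attains it, namely $(\tilde\phi,\tilde\psi)$, also lies in $\mathcal N'$, so $d_1\le E^1[\tilde\phi,\tilde\psi]=d$. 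The reverse inequality $d_1\ge d$ is the substance and will follow once attainability is in hand: if $(u_1,u_2)\in\mathcal N'$ attains $d_1$, I will argue $K[u_1,u_2]=0$ (not $<0$), which puts it in the admissible set for $d$ and gives $d=J[u_1,u_2]=E^1[u_1,u_2]=d_1$.

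For attainability I would take a minimizing sequence $(u_1^n,u_2^n)\in\mathcal N'$, i.e. $K[u_1^n,u_2^n]\le0$, $(u_1^n,u_2^n)\ne(0,0)$, with $E^1[u_1^n,u_2^n]\to d_1$. The first move is to replace each $(u_1^n,u_2^n)$ by its Schwarz spherical rearrangement $((u_1^n)^*,(u_2^n)^*)$: by Lemma \ref{lem:rearrangement} the $H^1$ norms do not increase (so $E^1$ does not increase), the $L^4$ norms are preserved, and $\int (u_1^*)^2(u_2^*)^2\ge\int u_1^2u_2^2$; since $\beta\ge0$ this means $K$ does not increase, so the rearranged sequence is still in $\mathcal N'$ and still minimizing. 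Thus I may assume the sequence is radially symmetric and radially nonincreasing. The $E^1$ bound gives a uniform $H^1\times H^1$ bound, so up to a subsequence $(u_1^n,u_2^n)\rightharpoonup(u_1,u_2)$ weakly in $H^1\times H^1$; by the compact embedding of radial $H^1(\mathbb R^3)$ into $L^p(\mathbb R^3)$ for $2<p<6$ (Strauss' lemma), we get strong convergence in $L^4$, hence $\int|u_i^n|^4\to\int|u_i|^4$ and $\int (u_1^n)^2(u_2^n)^2\to\int u_1^2u_2^2$. The key nontriviality point: I must rule out $(u_1,u_2)=(0,0)$. If the weak limit were zero then all the $L^4$-type integrals would vanish in the limit, so $K[u_1^n,u_2^n]=\|(u_1^n,u_2^n)\|_{H^1\times H^1}^2-(\text{small})$ would force $\|(u_1^n,u_2^n)\|_{H^1\times H^1}\to0$; but then, using the Gagliardo–Nirenberg/Sobolev inequality $\int|u_i|^4\lesssim\|u_i\|_{H^1}^4$ together with $K\le0$, one gets $\|(u_1^n,u_2^n)\|_{H^1\times H^1}^2\lesssim\|(u_1^n,u_2^n)\|_{H^1\times H^1}^4$, i.e. a uniform lower bound $\|(u_1^n,u_2^n)\|_{H^1\times H^1}\ge c>0$, a contradiction. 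So $(u_1,u_2)\ne(0,0)$.

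Next I must show $(u_1,u_2)\in\mathcal N'$, i.e. $K[u_1,u_2]\le0$. By weak lower semicontinuity of the $H^1$ norm and the strong $L^4$ convergence, $K[u_1,u_2]\le\liminf K[u_1^n,u_2^n]\le0$. Then by weak lower semicontinuity, $E^1[u_1,u_2]\le\liminf E^1[u_1^n,u_2^n]=d_1$, and since $(u_1,u_2)\in\mathcal N'$ we also have $E^1[u_1,u_2]\ge d_1$; hence equality, so $d_1$ is attained at $(u_1,u_2)$. Finally I claim $K[u_1,u_2]=0$: if $K[u_1,u_2]<0$, then by Lemma \ref{lem:K:mountainpass} there is $\lambda_0\in(0,1)$ with $K[\lambda_0 u_1,\lambda_0 u_2]=0$, so $(\lambda_0 u_1,\lambda_0 u_2)\in\mathcal N'$, yet $E^1[\lambda_0 u_1,\lambda_0 u_2]=\lambda_0^2 E^1[u_1,u_2]<E^1[u_1,u_2]=d_1$, contradicting the definition of $d_1$ (here also $d_1>0$, which follows from the same lower bound $c$ above). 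Therefore $K[u_1,u_2]=0$, whence $(u_1,u_2)$ is admissible for $d$ and $d\le J[u_1,u_2]=E^1[u_1,u_2]=d_1$; combined with $d_1\le d$ this yields $d_1=d$.

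\textbf{Main obstacle.} The delicate point is the exclusion of the trivial weak limit and the passage to the limit in $K$: both rely crucially on $\beta\ge0$ (so that the rearrangement does not increase $K$ and so that the mixed term $2\beta\int u_1^2u_2^2$ has a sign), and on the radial compactness that the rearrangement step buys us. I expect that verifying $K[u_1,u_2]=0$ rather than merely $\le0$—using the mountain-pass structure of $K$ from Lemma \ref{lem:K:mountainpass} to scale down a would-be competitor—is where one must be careful, since it is exactly this step that forces the minimizer onto the constraint surface $\{K=0\}$ and ties $d_1$ back to $d$.
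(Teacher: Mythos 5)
Your proposal is correct and follows essentially the same route as the paper: Schwarz rearrangement to gain radial compactness, weak $H^1$ limit with strong $L^4$ convergence, nontriviality of the limit via the Sobolev lower bound forced by $K\leq 0$, and the scaling argument through Lemma \ref{lem:K:mountainpass} to place the minimizer on $\{K=0\}$ and conclude $d_1=d$. The only (harmless) streamlining is that you replace the minimizing sequence outright by its rearrangement, using $\beta\geq 0$ to check it stays in $\mathcal{N}'$, whereas the paper keeps both sequences and chains the inequalities; both arguments are sound.
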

	Note that \textbf{Theorem \ref{thm:char:d:int}} follows directly from this lemma.
	\begin{proof} We follow the lines in the proof of {Proposition 3.5 in \cite{sirakov07}}.
		Let $\{(v_{m,1},v_{m,2})\}\subset \mathcal{N}' $ be a minimizing sequence for $d_1$, then we have
		\begin{equation}\label{eq:2506}
			\|v_{m,1}\|_{H^1}^2+\|v_{m,2}\|_{H^1}^2\rightarrow_{m\rightarrow \infty} 4d_1.
		\end{equation}
		It follows that the sequence $\{(v_{m,1},v_{m,2})\}$ is bounded in $H^1\times H^1$.
		
		In order to capture the compactness result from this boundedness, we invoke to the Schwarz symmetrization. For each $m$, let $v_{m,i}^*$ be the Schwarz spherical rearrangement of $v_{m,i},i=1,2$. On the one hand, for each $m$ and each $i\in\{1,2\}$, $v_{m,i}^*$ is a radial function. On the other hand, by Lemma \ref{lem:rearrangement}, we have
		\begin{equation}
			E^1[v_{m,1}^*,v_{m,2}^*]\leq E^1[v_{m,1},v_{m,2}].
		\end{equation}
		Combining these two points with \eqref{eq:2506}, we see that $\{(v_{m,1}^*,v_{m,2}^*)\}$ is a bounded sequence in $H_r^1\times H_r^1$. Here $H^1_r$ is the subspace of $H^1$, consisting of functions that are spherically symmetric. Thus, by \cite[$\mathbf{Theorem\ A.I}'$]{BL83}, we conclude that there exists a couple  $(v_{*,1},v_{*,2})\in H^1_r\times H^1_r$ such that $(v_{m,1}^*,v_{m,2}^*)$ converges weakly $(v_{*,1},v_{*,2})$ in $H^1_r\times H^1_r$ and strongly to $(v_{*,1},v_{*,2})$ in $L^4\times L^4$ (up to subsequence. Here we denoted this subsequence still by $\{(v_{m,1}^*,v_{m,2}^*)\}$).

			We shall show $(v_{*,1},v_{*,2})\in \mathcal{N}'$.
			Applying lower semi-continuity and then using Lemma \ref{lem:rearrangement}, we first obtain
			\begin{equation}\label{eq:2507}
				\|(v_{*,1},v_{*,2})\|^2_{H^1_r\times H^1_r}\leq {\liminf}_{m\rightarrow \infty} \|(v_{m,1}^*,v_{m,2}^*)\|^2_{H^1_r\times H^1_r}\leq{\liminf}_{m\rightarrow \infty} \|(v_{m,1},v_{m,2})\|^2_{H^1\times H^1}.
			\end{equation}
			Since for each $m$, $(v_{m,1},v_{m,2})\in\mathcal{N}'$, we have $K[v_{m,1},v_{m,2}]\leq 0$ and hence
			\begin{equation*}
				\|(v_{m,1},v_{m,2})\|^2_{H^1\times H^1} \leq \int_{\mathbb{R}^3} \left[v_{m,1}^4+v_{m,2}^4+2\beta v_{m,1}^2v_{m,2}^2\right]\mathrm{d}x,\ \ \forall m.
			\end{equation*}
			Combining this with \eqref{eq:2507}, we get
			\begin{equation}\label{eq:2508}
				\|(v_{*,1},v_{*,2})\|^2_{H^1_r\times H^1_r}\leq \liminf_{m\rightarrow\infty}\int_{\mathbb{R}^3} \left[v_{m,1}^4+v_{m,2}^4+2\beta v_{m,1}^2v_{m,2}^2\right]\mathrm{d}x.
			\end{equation}
			We next bound the right hand side of \eqref{eq:2508}. For each $m$, {by applying Lemma \ref{lem:rearrangement}	with $p=4$ and using our assumption $\beta\geq 0$, we have
			\begin{equation}\label{eq:2509}
				\int_{\mathbb{R}^3} \left[v_{m,1}^4+v_{m,2}^4+2\beta v_{m,1}^2v_{m,2}^2\right]dx\leq \int_{\mathbb{R}^3} \left[\left(v_{m,1}^*\right)^4+\left(v_{m,2}^*\right)^4+2\beta\left(v_{m,1}^*\right)^2\left(v_{m,2}^*\right)^2\right]\mathrm{d}x.
			\end{equation}}
			By the strong convergence of $\{(v_{m,1}^*,v_{m,2}^*)\}$, we see
			\begin{equation}\label{eq:2515}
				\liminf_{m\rightarrow \infty} \int_{\mathbb{R}^3}\left[\left(v_{m,1}^*\right)^4+\left(v_{m,2}^*\right)^4\right]\mathrm{d}x= \int_{\mathbb{R}^3}\left[\left(v_{*,1}\right)^4+\left(v_{*,2}\right)^4\right]\mathrm{d}x.
			\end{equation}
			In order to get the convergence of the remaining part of the right hand side of \eqref{eq:2509}, we do the following calculations
			\begin{equation}\label{eq:2511}
				\int_{\mathbb{R}^3}\left|\left(v_{m,1}^*\right)^2\left(v_{m,2}^*\right)^2-\left(v_{*,1}\right)^2\left(v_{*,2}\right)^2\right|\mathrm{d}x\leq \int_{\mathbb{R}^3}\left|\left(v_{m,1}^*\right)^2\left(v_{m,2}^*\right)^2-\left(v_{m,1}^*\right)^2\left(v_{*,2}\right)^2\right|\mathrm{d}x+
			\end{equation}
			\begin{equation*}
				\ \ \ \ \ \ \ \ \ \ \ \ \ \ \ \ \ \ \ \ \ \ \ \ \ \ \ \ \ \ \ \ \ \ \ \ \ \ \ \ \ \ \ \ \ \ \ \ \ \ \ \ \ \ \ \ \ \ \ \ \ \ 	+\int_{\mathbb{R}^3}\left|\left(v_{m,1}^*\right)^2\left(v_{*,2}\right)^2-\left(v_{*,1}\right)^2\left(v_{*,2}\right)^2\right|\mathrm{d}x=:I+II.
			\end{equation*}
			Using H\"{o}lder's inequality, we can bound $I$ as follows
			\begin{equation}
				I\leq \left\|v_{m,1}^*\right\|^2_{L^4}\left\|v_{m,2}^*+v_{*,2}\right\|_{L^4}\times \left\|v_{m,2}^*-v_{*,2}\right\|_{L^4},
			\end{equation}
			which tends to $0$ as $m$ tends to infinity, thanks to the strong convergence of $\{(v_{m,1}^*,v_{m,2}^*)\}$ to $(v_{*,1},v_{*,2})$ in $L^4\times L^4$. Hence
			\begin{equation}\label{eq:2512}
				\liminf_{m\rightarrow\infty}\int_{\mathbb{R}^3}\left|\left(v_{m,1}^*\right)^2\left(v_{m,2}^*\right)^2-\left(v_{m,1}^*\right)^2\left(v_{*,2}\right)^2\right|\mathrm{d}x=0.
			\end{equation} Applying a similar argument to $II$, we can see that
			\begin{equation}\label{eq:2513}
				\liminf_{m\rightarrow\infty}\int_{\mathbb{R}^3}\left|\left(v_{m,1}^*\right)^2\left(v_{*,2}\right)^2-\left(v_{*,1}\right)^2\left(v_{*,2}\right)^2\right|\mathrm{d}x=0.
			\end{equation}
			By sending $m$ to infinity on both sides of \eqref{eq:2511} and using \eqref{eq:2512} and \eqref{eq:2513}, we obtain that $\left(v_{m,1}^*\right)^2\left(v_{m,2}^*\right)^2$ converges strongly to $\left(v_{*,1}\right)^2\left(v_{*,2}\right)^2$ in $L^1$. Consequently we get
			\begin{equation}\label{eq:2514}
				\liminf_{m\rightarrow\infty}\int_{\mathbb{R}^3} \left[2\beta v_{m,1}^2v_{m,2}^2\right]\mathrm{d}x=\int_{\mathbb{R}^3}2\beta\left(v_{*,1}\right)^2\left(v_{*,2}\right)^2\mathrm{d}x.
			\end{equation}
			By sending $m$ to infinity on both sides of \eqref{eq:2509}, and using \eqref{eq:2514} and \eqref{eq:2515}, we see
			\begin{equation}\label{eq:2516}
				\liminf_{m\rightarrow\infty}\int_{\mathbb{R}^3} \left[v_{m,1}^4+v_{m,2}^4+2\beta v_{m,1}^2v_{m,2}^2\right]\mathrm{d}x=\int_{\mathbb{R}^3}\left[\left(v_{*,1}\right)^4+\left(v_{*,2}\right)^4+2\beta\left(v_{*,1}\right)^2\left(v_{*,2}\right)^2\right]\mathrm{d}x.
			\end{equation}
			Substituting this equality into \eqref{eq:2508}, we get
			\begin{equation}
				\|(v_{*,1},v_{*,2})\|^2_{H^1_r\times H^1_r}\leq \int_{\mathbb{R}^3}\left[\left(v_{*,1}\right)^4+\left(v_{*,2}\right)^4+2\beta\left(v_{*,1}\right)^2\left(v_{*,2}\right)^2\right]\mathrm{d}x.
			\end{equation}
			From this, it follows that {$K[v_{*,1},v_{*,2}]\leq 0$}.
			
			It remains to prove $(v_{*,1},v_{*,2})\neq(0,0)$ in $H^1\times H^1$. Since $\{\left(v_{m,1},v_{m,2}\right)\}\subset\mathcal{N}'$, we have for each $m$
			\begin{equation}\label{eq:2519}
				\|(v_{m,1},v_{m,2})\|^2_{H^1\times H^1}\leq \int_{\mathbb{R}^3}\left[v_{m,1}^4+v_{m_2}^4+2\beta v_{m,1}^2v_{m,2}^2\right]\mathrm{d}x.
			\end{equation}
			We bound further this inequality as follows
			\begin{equation}\label{eq:2520}
				\int_{\mathbb{R}^3}\left[v_{m,1}^4+v_{m_2}^4+2\beta v_{m,1}^2v_{m,2}^2\right]\mathrm{d}x\leq {(1+\beta)}\int_{\mathbb{R}^3}\left[v_{m,1}^4+v_{m_2}^4\right]\mathrm{d}x
			\end{equation}
			\begin{equation*}
				\ \ \ \ \ \ \ \ \ \ \ \ \ \ \ \ \ \ \ \ \ \ \ \ \ \ \ \ \ \ \ \ \ \ \ \ \ \ \ \ \ \ \ \ \ \ \ \ \ \ \ \ \ \ \ \ \ \ \ \
				\leq C\left[\int_{\mathbb{R}^3}\left[v_{m,1}^4+v_{m_2}^4\right]\mathrm{d}x\right]^{\frac{1}{2}}\times \|(v_{m,1},v_{m,2})\|^2_{H^1\times H^1}.
			\end{equation*}
			where we used H\"{o}lder's inequality and the elementary inequality $2ab\leq a^2+b^2$ in the first inequality, and the Sobolev embedding $H^1\subset L^4$ in the second inequality. Here the positive constant $C$ depends on the Sobolev embedding and $\beta$. Combining \eqref{eq:2519} and \eqref{eq:2520}, we get for each $m$
			\begin{equation}\label{eq:2521}
				\|(v_{m,1},v_{m,2})\|^2_{H^1\times H^1}\leq C\left[\int_{\mathbb{R}^3}\left[v_{m,1}^4+v_{m_2}^4\right]\mathrm{d}x\right]^{\frac{1}{2}}\times \|(v_{m,1},v_{m,2})\|^2_{H^1\times H^1}.
			\end{equation}
			It follows from $(v_{m,1},v_{m,2})\in \mathcal{N}'$, that $\|(v_{m,1},v_{m,2})\|^2_{H^1\times H^1}>0$. Thus, we can cancel out the common factor $\|(v_{m,1},v_{m,2})\|^2_{H^1\times H^1}$ on both sides of \eqref{eq:2521} to conclude
			\begin{equation}
				1<C\left[\int_{\mathbb{R}^3}\left[v_{m,1}^4+v_{m,2}^4\right]\mathrm{d}x\right]^{\frac{1}{2}}=C\left[\int_{\mathbb{R}^3}\left[\left(v_{m,1}^*\right)^4+\left(v_{m,2}^*\right)^4\right]\mathrm{d}x\right]^{\frac{1}{2}}
			\end{equation}
			where in the equality, we have used  Lemma \ref{lem:rearrangement} with $p=4$.
			By the strong convergence of $\{(v_{m,1}^*,v_{m,2}^*)\}$ in $L^4\times L^4$ once again, we can send $m$ to $\infty$ to obtain
			\begin{equation}\label{eq:2522}
				1\leq C\left[\int_{\mathbb{R}^3}\left[\left(v_{*,1}\right)^4+\left(v_{*,2}\right)^4\right]\mathrm{d}x\right]^{\frac{1}{2}}.
			\end{equation}
			We use the Sobolev embedding to bound the right hand side of \eqref{eq:2522} to obtain
			\begin{equation}
				1\leq \tilde{C}\|(v_{*,1},v_{*,2})\|^2_{H^1\times H^1}
			\end{equation}
			where $\tilde{C}$ is a positive constant, different from $C$. Consequently $(v_{*,1},v_{*,2})\neq (0,0)$ in $H^1\times H^1$.
			
			Combining these two results $K[v_{*,1},v_{*,2}]\leq 0$ and $(v_{*,1},v_{*,2})\neq (0,0)$ in $H^1\times H^1$, we see that $(v_{*,1},v_{*,2})\in \mathcal{N}'$. This finishes the proof of first part of Lemma \ref{lem:keyvariation}.
			
			We turn to the proof of second part. We note that if $(v_1,v_2)\in H^1\times H^1$ satisfies $K[v_1,v_2]=0$, then $E^1[v_1,v_2)]=J[v_1,v_2]$. Then by definitions of $J$ and $E^1$, we get
			\begin{equation}\label{eq:2523}
				d_1\leq d.
			\end{equation}
			
			To show the reversed inequality, we argue by contradiction by assuming $K[v_{*,1},v_{*,2}]<0$. Since $(v_{*,1},v_{*,2})\neq (0,0)$ in $H^1\times H^1$, it then follows from Lemma \ref{lem:K:mountainpass}, that there exists a constant $\lambda\in (0,1)$, such that $K[\lambda v_{*,1},\lambda v_{*,2}]=0$. Thus we have
			\begin{equation}
				d\leq J[\lambda v_{*,1},\lambda v_{*,2}]=\frac{1}{4}E^1[\lambda v_{*,1},\lambda v_{*,2}]= \frac{\lambda^2 }{4}E^1[v_{*,1},v_{*,2}]<\frac{1}{4}E^1[ v^*]=d_1.
			\end{equation}
			But this contradicts with the known inequality \eqref{eq:2523}. This contradiction implies $$K[v_{*,1},v_{*,2}]\geq0.$$ Combining this with the first part (that is,$(v_{*,1},v_{*,2})\in\mathcal{N}'$), we can only have
			\begin{equation*}
				K[v_{*,1},v_{*,2}]=0.
			\end{equation*}
			Consequently $d_1=\frac{1}{4}E^1[v_{*,1},v_{*,2}]=J[v_{*,1},v_{*,2}]\geq d$. This finishes the proof of second part of Lemma \ref{lem:keyvariation} and hence completes the proof of Lemma \ref{lem:keyvariation}.

		%	The idea follows from \cite[Lemma 3]{linwei08},  the proof is quite similar.  Firstly, by Gargliado-Nirenberg inequality or \eqref{eq:1403},this variation problem is well-defined.
		%	Next,  observe that
		%\begin{equation}
		%	\mathop{\inf}\limits_{(w,v)\in \mathcal{N}' }E^1[w,v]\geq \mathop{\inf}\limits_{(w,v)\in \mathcal{N}}E^1[w,v]=\mathop{\inf}\limits_{(w,v)\in \mathcal{N}}E[w,v]=d,
		%\end{equation}
		%where we used two facts that
		%\begin{equation}
		%\mathcal{N} \subset\mathcal{N}' \text{~and~} E^1[w,v]=E[w,v], \forall (w,v)\in \mathcal{N}.
		%\end{equation}
	\end{proof}

%__________________________________________________________________________________________________________________	

\subsection{Blow up criterion}
As is alluded to in the introduction part, we used $d$ to define two subsets of initial datum: $\mathcal{W}$ and $\mathcal{E}$. In order to prove Theorem \ref{thm:payne-sattinger:int}, we first  affirm the fact that $\mathcal{E}$ is invariant under the flow associated to \eqref{eq:skg}, as is stated in the following.
\begin{proposition}\label{prop:inv:ext}
	$\mathcal{E}$ is invariant under the flow of \eqref{eq:skg:int}.
\end{proposition}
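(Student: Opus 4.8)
The plan is to run the standard potential-well invariance argument, combining the conservation of energy with Corollary \ref{lem:tech}. Let $\bigl((u_0,u_1),(v_0,v_1)\bigr)\in\mathcal{E}$ and let $(u(t),v(t))$ be the corresponding solution on $[0,T_{\max})$ given by Proposition \ref{prop:cauchy}. Since $E$ is conserved, $E(t)=E(0)<d$ for every $t\in[0,T_{\max})$, so the energy constraint defining $\mathcal{E}$ is propagated for free; the entire issue is to show that the sign condition $K[u(t),v(t)]<0$ persists for all $t\in[0,T_{\max})$.

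First I would set up a contradiction argument via a first-exit time. Suppose $K[u(t),v(t)]<0$ fails somewhere on $[0,T_{\max})$. By Proposition \ref{prop:cauchy} the map $t\mapsto K[u(t),v(t)]$ is continuous, and $K[u_0,v_0]<0$, so there is a first time $t_0\in(0,T_{\max})$ with $K[u(t_0),v(t_0)]=0$ and $K[u(t),v(t)]<0$ for all $t\in[0,t_0)$. Pick any sequence $t_n\uparrow t_0$. For each $n$ we have $K[u(t_n),v(t_n)]<0$, which in particular forces $\|(u(t_n),v(t_n))\|_{H^1\times H^1}\neq 0$ (otherwise $K$ would vanish there), and by continuity $K[u(t_n),v(t_n)]\to K[u(t_0),v(t_0)]=0$. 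Hence the hypotheses of Corollary \ref{lem:tech} are satisfied by the sequence $\{(u(t_n),v(t_n))\}$, yielding $\liminf_{n\to\infty}J[u(t_n),v(t_n)]\geq d$.

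On the other hand, from the definitions of the energy $E(t)$ in \eqref{energy} and of $J$ in \eqref{eq:J:int} one has the pointwise identity $E(t)=\tfrac12\|(u_t(t),v_t(t))\|_{L^2\times L^2}^2+J[u(t),v(t)]$, so $J[u(t),v(t)]\leq E(t)=E(0)<d$ for every $t\in[0,T_{\max})$. In particular $\liminf_{n\to\infty}J[u(t_n),v(t_n)]\leq E(0)<d$, contradicting the conclusion of Corollary \ref{lem:tech}. Therefore no such $t_0$ exists, i.e. $K[u(t),v(t)]<0$ on all of $[0,T_{\max})$. Together with $E(t)=E(0)<d$ this shows $\bigl((u(t),u_t(t)),(v(t),v_t(t))\bigr)\in\mathcal{E}$ for every $t\in[0,T_{\max})$, which is the asserted invariance.

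The only genuinely delicate point is the continuity of $t\mapsto K[u(t),v(t)]$, which is where the local Cauchy theory (the solution lying in $L^\infty_tH^1_x$ together with the embedding $H^1\hookrightarrow L^4$ controlling the quartic terms) is used — exactly as in the proof of Lemma \ref{lem:skg:1}; once that is granted, the first-exit mechanism and the application of Corollary \ref{lem:tech} are routine. One should also keep in mind that the statement only claims invariance along the flow for as long as the solution exists, so whether $T_{\max}$ is finite or not plays no role in this proposition; the finite-time blow up of data in $\mathcal{E}$ is extracted afterwards from this invariance through the concavity argument sketched in the introduction.
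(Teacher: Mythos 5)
Your proof is correct and follows essentially the same first-exit-time argument as the paper: conservation of energy handles the constraint $E<d$, continuity of $K$ gives a first vanishing time $t_0$, and Corollary \ref{lem:tech} applied to a sequence $t_n\uparrow t_0$ yields $d\leq\liminf_n J[u(t_n),v(t_n)]\leq E(0)<d$. The one small difference is that you deduce $(u(t_n),v(t_n))\neq(0,0)$ directly from $K[u(t_n),v(t_n)]<0$ (since $K$ vanishes at the zero element), which cleanly replaces the paper's separate Claim \ref{claim:4}.
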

\begin{proof}
	Given $\left((u_0,u_1),(v_0,v_1)\right)\in\mathcal{E}$, let $\left(u(t),v(t)\right)$ be its corresponding solution to \eqref{eq:skg}. We shall show that as long as the time variable $t$ is in the lifespan, we will still have $\left((u(t),\partial_tu(t)),(v(t),\partial_tv(t))\right)$ will remain in $\mathcal{E}$.
	
	We argue by contradiction. Assume $\left((u(t),\partial_tu(t)),(v(t),\partial_tv(t))\right)$ leaves $\mathcal{E}$ at time $t_0$, we then have
	\begin{equation*}
		K\left[u(t_0),v(t_0)\right]=0.
	\end{equation*}
	Since $K[u(t),v(t)]$ {is continuous in $t$} and $K[u_0,v_0]<0$, we can assume $t_0>0$ is the first time at which $\left((u(t),\partial_tu(t)),(v(t),\partial_tv(t))\right)$ leaves $\mathcal{E}$. Then for any subsequence $\{t_n\}$ satisfying $t_n\rightarrow t_0$ from left as $n\rightarrow\infty$, we have
	\begin{equation*}
		K\left[u(t_0),v(t_0)\right]\leq \liminf_{n\rightarrow\infty}K\left[u(t_n),v(t_n)\right]\leq 0.
	\end{equation*}
	By the contradiction hypothesis, the equality holds here.
	\begin{claim}\label{claim:4}
		There exists such a subsequence $\{t_n\}$ such that $\left(u(t_n),v(t_n)\right)\neq (0,0)$ in $H^1\times H^1$.
	\end{claim}
	\begin{proof}[Proof of Claim \ref{claim:4}]
		Assume we do not have such a subsequence, then $\left(u(t),v(t)\right)$ vanishes (as element in $H^1\times H^1$) in a left open neighborhood of $t_0$, say \begin{equation}\label{eq:1200}
			\left(u(t),v(t)\right)=(0,0),\ \forall t\in(t_0-\delta,t_0)
		\end{equation}
	for some small $\delta>0$. By the Sobolev embedding $H^1\subset L^4$, there holds for such $t$
	\begin{equation}\label{eq:1201}
		\int_{\mathbb{R}^3} u^4\mathrm{d}x=\int_{\mathbb{R}^3}v^4\mathrm{d}x=0.
	\end{equation}
	Substituting \eqref{eq:1200} and \eqref{eq:1201} into the defining expressions of $J$ and $K$, we get
	\begin{equation}
		J\left[u(t),v(t)\right]=K\left[u(t),v(t)\right]=0,\ \forall t\in(t_0-\delta,t_0).
	\end{equation}
	{This implies that $K[u(t_0-\delta),v(t_0-\delta)]=0$ by the {continuity of $K[u(t),v(t)]$ in the time variable $t$}}. We know from this that $\left((u(t),\partial_tu(t)),(v(t),\partial_tv(t))\right)$ leaves $\mathcal{E}$ at time $t_0-\delta$, before $t_0$. But this contradicts our assumption on $t_0$. This ends the proof of Claim \ref{claim:4}.	
	\end{proof}
	Let's continue the proof of Proposition \ref{prop:inv:ext}. By Claim \ref{claim:4}, we can take the sequence $\{t_n\}$ so that $\left(u(t_n),v(t_n)\right)\neq (0,0)$ in $H^1\times H^1$. Then by \textbf{Corollary \ref{lem:tech}}, we get
	\begin{equation}
		\liminf_{n\rightarrow\infty}J[u(t_n),v(t_n)]\geq d.
	\end{equation}
	On the one hand, by the definition of $\mathcal{E}$, the RHS of this inequality is strictly bigger that $E(t_0)$. On the other hand, by definitions of $J$ and $E$, the LHS is smaller than $E(t_0)$. Combining these two points, we get $d>d$, which is impossible. This contradiction implies that $\left((u(t),\partial_tu(t)),(v(t),\partial_tv(t))\right)$ can not leave $\mathcal{E}$, which completes the proof of \textbf{Proposition \ref{prop:inv:ext}}.
\end{proof}

We are ready to give
\begin{proof}[Proof of Theorem \ref{thm:payne-sattinger:int}]
	We first prove \textit{(i)}. Let $\left((u_0,u_1),(v_0,v_1)\right)\in\mathcal{W}$ and $\left(u(t),v(t)\right)$  the corresponding local solution of \eqref{eq:skg}.
	\begin{claim} \label{claim:1}As long as the solution $\left(u(t),v(t)\right)$ exists at time $t$, we have
		$K[u(t),v(t)]\geq 0$.
	\end{claim}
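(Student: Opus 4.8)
The plan is to run the Payne--Sattinger invariance argument for the stable set $\mathcal{W}$: one argues by contradiction and shows that the flow cannot cross the boundary hypersurface $\{K=0\}$, because every nonzero configuration lying on it sits above the mountain pass level $d$, while the genuinely trivial configuration is protected by the coercivity of $K$ near the origin. Concretely, assume the conclusion fails, so that $\{t\in[0,T_{\max}):K[u(t),v(t)]<0\}$ is nonempty, and set $t_0:=\inf\{t\in[0,T_{\max}):K[u(t),v(t)]<0\}$. By the continuity of $t\mapsto K[u(t),v(t)]$ (Proposition \ref{prop:cauchy}) together with the hypothesis $K[u_0,v_0]\geq0$, we get $K[u(t),v(t)]\geq0$ for all $t\in[0,t_0]$ and $K[u(t_0),v(t_0)]=0$.

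I would then split into two cases according to whether $(u(t_0),v(t_0))$ is the zero element of $H^1\times H^1$. If $(u(t_0),v(t_0))\neq(0,0)$, then it is an admissible competitor in the constrained problem defining $d$ (it is nonzero and satisfies $K=0$), so $J[u(t_0),v(t_0)]\geq d$; on the other hand, the identity $E(t)=\tfrac12\|(u_t(t),v_t(t))\|_{L^2\times L^2}^2+J[u(t),v(t)]$ together with energy conservation yields $J[u(t_0),v(t_0)]\leq E(t_0)=E(0)<d$, a contradiction. If instead $(u(t_0),v(t_0))=(0,0)$, I would use the small-data coercivity of $K$: by H\"{o}lder's inequality, the elementary bound $2w_1^2w_2^2\leq w_1^4+w_2^4$, the assumption $\beta\geq0$, and the Sobolev embedding $H^1\subset L^4$, one has $K[w_1,w_2]\geq\|(w_1,w_2)\|_{H^1\times H^1}^2\bigl(1-C\|(w_1,w_2)\|_{H^1\times H^1}^2\bigr)$ with $C$ depending only on the Sobolev constant and $\beta$, hence $K[w_1,w_2]\geq0$ whenever $\|(w_1,w_2)\|_{H^1\times H^1}\leq\rho$ for a fixed $\rho>0$. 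Since the solution furnished by Proposition \ref{prop:cauchy} is continuous with values in $H^1\times H^1$ and vanishes there at $t_0$, there is $\varepsilon>0$ with $\|(u(t),v(t))\|_{H^1\times H^1}\leq\rho$, and hence $K[u(t),v(t)]\geq0$, for all $t\in[t_0,t_0+\varepsilon)$; this contradicts $t_0$ being the infimum of a nonempty set on which $K<0$.

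The only genuinely delicate point, I expect, is the case $(u(t_0),v(t_0))=(0,0)$: there the mountain-pass mechanism is vacuous since $J[u(t_0),v(t_0)]=0<d$, so one must separately rule out $K$ becoming negative exactly where the quartic nonlinearity is negligible, which is precisely what the small-data coercivity of $K$ together with the $H^1$-continuity of the flow deliver; the rest is a routine continuity-plus-conservation matter. One could also bypass the case split, as in the proof of Proposition \ref{prop:inv:ext}, by applying Corollary \ref{lem:tech} along a sequence $t_n\downarrow t_0$ with $(u(t_n),v(t_n))\neq(0,0)$ and $K[u(t_n),v(t_n)]<0$ (such a sequence exists, since otherwise $(u,v)$ vanishes on a right neighbourhood of $t_0$, forcing $K\equiv0$ there, contrary to the choice of $t_0$), which gives $d\leq\liminf_{n\to\infty}J[u(t_n),v(t_n)]\leq E(0)<d$; but handling the trivial configuration directly seems cleaner here, since $K$ genuinely equals $0$ at $t_0$.
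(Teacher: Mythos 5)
Your proof is correct, but it takes a genuinely different route from the paper's. The paper deduces Claim \ref{claim:1} from the flow-invariance of the unstable set $\mathcal{E}$ (Proposition \ref{prop:inv:ext}, applied in the reversed time direction): if $K$ were negative at some $t_0$ then the trajectory would lie in $\mathcal{E}$ there, yet it sits in $\mathcal{W}$ at $t=0$, contradicting invariance. That invariance result in turn rests on Corollary \ref{lem:tech}, hence on the extended variational characterization $d=\inf_{\mathcal{N}'}E^1$ of Theorem \ref{thm:char:d:int}. You instead work directly at the first crossing time $t_0$ of $\{K=0\}$ and need only the original definition of $d$ as the constrained minimum of $J$ over $\{K=0\}\setminus\{(0,0)\}$, together with $J\leq E=E(0)<d$; the degenerate case $(u(t_0),v(t_0))=(0,0)$ is handled by the small-data coercivity $K[w_1,w_2]\geq\|(w_1,w_2)\|_{H^1\times H^1}^2\bigl(1-C\|(w_1,w_2)\|_{H^1\times H^1}^2\bigr)$, which is a correct consequence of $\beta\geq0$, the bound $2w_1^2w_2^2\leq w_1^4+w_2^4$ and the Sobolev embedding, combined with the $C([0,T];H^1\times H^1)$ regularity implicit in Proposition \ref{prop:cauchy}. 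Your argument is thus more self-contained for this particular claim (it bypasses Theorem \ref{thm:char:d:int} and Proposition \ref{prop:inv:ext} entirely), at the cost of a case split; the paper's route reuses machinery it must build anyway for part \textit{(ii)} of Theorem \ref{thm:payne-sattinger:int}. One small simplification available to you: in your alternative closing remark, any $t$ with $K[u(t),v(t)]<0$ automatically has $(u(t),v(t))\neq(0,0)$ since $K[0,0]=0$, so the sequence $t_n\downarrow t_0$ needed for Corollary \ref{lem:tech} exists trivially and the parenthetical justification via vanishing on a right neighbourhood is unnecessary.
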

	
	By Claim \ref{claim:1} and the definition of $K[u(t),v(t)]$, we obtain
	\begin{equation*}
		\int_{\mathbb{R}^3}\left[u^4+v^4+2\beta u^2v^2\right] \mathrm{d}x\leq \int_{\mathbb{R}^3}\left[|\nabla u|^2+u^2+|\nabla v|^2+v^2\right]\mathrm{d}x,
	\end{equation*}
	which in turn, together with the definition of the energy, implies
	\begin{equation*}
		E[u(t),v(t)]\geq \frac{1}{4}\left(\|u(t)\|^2_{H^1}+\|v(t)\|^2_{H^1} \right)+\frac{1}{2}\left(\|\partial_tu(t)\|^2_{L^2}+\|\partial_tv(t)\|^2_{L^2} \right).
	\end{equation*}
	Together with the energy conservation, this implies
	\begin{equation*}
		\left\|\left((u(t),\partial_tu(t)),(v(t),\partial_tv(t))\right)\right\|_{\mathcal{H}\times\mathcal{H}}\sim E[u_0,v_0]
	\end{equation*}
	as long as the solution $\left(u(t),v(t)\right)$ exists at the time $t$. Then by {\textbf{Proposition} \ref{prop:cauchy}}, we can extend the local solution onto a larger time interval. If we repeat the above argument on the extended time interval, we can extend the solution to be defined on the whole time line. Therefore, to finish the proof of \textit{(i)}, it remains to give
	\begin{proof}[Proof of  Claim \ref{claim:1}]
			Assume by contradiction, there was some time $t_0$ so that 	$K_0[u(t_0),v(t_0)]<0$. We note that $t_0\neq 0$, because we have by the assumption $\left((u_0,u_1),(v_0,v_1)\right)\in\mathcal{W}$ that $K_0[u_0,v_0]=0$. Since 	$K_0[u(t),v(t)]$ is continuous in $t$, there exists a maximal interval $(t_-,t_+)$ containing $t_0$, on which 	$K[u(t),v(t)]<0$. It is clear that $0$ is not in $(t_-,t_+)$, so that we can first assume $t_+$ is close to $0$. From the ordered structure of the real line, we get from this that $t_+<0$ and hence $t_0<0$. If we restrict the solution $\left(u(t),v(t)\right)$ of the equation \eqref{eq:skg} onto time interval $[t_0,0]$, we see that $\left((u(t),\partial_tu(t)),(v(t),\partial_tv(t))\right)$ will enter $\mathcal{W}$ at time $t=0$, before which it is in $\mathcal{E}$. This contradicts the invariance of $\mathcal{E}$ under the flow of \eqref{eq:skg}. Thus we are left with the case that $t_-$ is close to $0$. In this case, by once again the order structure of the real line, we get from this that $t_->0$ and hence $t_0>0$. If we restrict the solution $\left(u(t),v(t)\right)$ of the equation \eqref{eq:skg} onto  time interval $[0,t_0]$, we see that $\left((u(t),\partial_tu(t)),(v(t),\partial_tv(t))\right)$ will enter $\mathcal{W}$ at time $t=0$ in the reversed time direction, after which it is in $\mathcal{E}$. Since \eqref{eq:skg} is invariant if the time is reversed, this also contradicts the invariance of $\mathcal{E}$ under the flow of \eqref{eq:skg}. These two contradictions show that there is no such time $t_0$ at which $K_0[u(t),v(t)]<0$. Hence this completes the proof of Claim \ref{claim:1}.
	\end{proof}
	
	We next prove the second assertion \textit{(ii)}. Let $\left((u_0,u_1),(v_0,v_1)\right)\in\mathcal{E}$ and $\left(u(t),v(t)\right)$ its corresponding solution to \eqref{eq:skg:int}. We argue by contradiction. Assume the solution exists for all time.
	
 By Proposition \ref{prop:inv:ext}, we see that $\left((u(t),\partial_tu(t)),(v(t),\partial_tv(t))\right)$ remains in $\mathcal{E}$ for any $t\geq 0$. Consequently we have $K[u(t),v(t)]<0$. Substituting this strict inequality into \eqref{eq:ene:d2}, we get that
				\begin{equation}\label{eq:1300}
			\frac{\mathrm{d}^2}{\mathrm{d}t^2}y(t)=2\left(\int_{\mathbb{R}^3}|u_t|^2+|v_t|^2\mathrm{d}x\right)-K[u(t),v(t)]>0,
		\end{equation}
	for any $t\geq 0$. From this we know that $y(t)$ is a strictly convex function of $t$. What's more, we have the following strong result.
	
	\begin{claim}\label{claim:5}
		There exists a time $t_\ast$, such that for all $t\geq t_\ast$ we have
		\begin{equation}
			\frac{\mathrm{d}}{\mathrm{d}t}y(t)>0.
		\end{equation}
	\end{claim}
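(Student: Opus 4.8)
The plan is to strengthen the strict convexity of $y$ already recorded in \eqref{eq:1300} to a \emph{uniform} positive lower bound $\frac{\mathrm{d}^2}{\mathrm{d}t^2}y(t)\ge c_0>0$, from which Claim \ref{claim:5} follows by one integration. In the situation at hand (the solution is assumed global and $\left((u_0,u_1),(v_0,v_1)\right)\in\mathcal{E}$), Proposition \ref{prop:inv:ext} keeps the orbit in $\mathcal{E}$, so $K[u(t),v(t)]<0$ for every $t\ge0$ and, by energy conservation, $E(t)=E(0)<d$; moreover $(u(t),v(t))\neq(0,0)$ in $H^1\times H^1$, since otherwise $K[u(t),v(t)]$ would vanish.

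Next I would rewrite $\frac{\mathrm{d}^2}{\mathrm{d}t^2}y$ purely in terms of $E$ and the $H^1\times H^1$-norm. From the definitions of $E$ and $K$ one has $\int_{\mathbb{R}^3}(u^4+v^4+2\beta u^2v^2)\,\mathrm{d}x=\|(u,v)\|_{H^1\times H^1}^2-K(t)$, hence $K(t)=4E(0)-2\|(u_t,v_t)\|_{L^2\times L^2}^2-\|(u,v)\|_{H^1\times H^1}^2$; substituting into \eqref{eq:ene:d2} gives
\begin{equation*}
\frac{\mathrm{d}^2}{\mathrm{d}t^2}y(t)=6\|(u_t,v_t)\|_{L^2\times L^2}^2+2\|(u,v)\|_{H^1\times H^1}^2-8E(0)\ \ge\ 2\|(u,v)\|_{H^1\times H^1}^2-8E(0).
\end{equation*}
Now comes the essential ingredient: since $K[u(t),v(t)]\le0$ and $(u(t),v(t))\neq(0,0)$, the pair $(u(t),v(t))$ lies in $\mathcal{N}'$, so Theorem \ref{thm:char:d:int} (equivalently Lemma \ref{lem:keyvariation}) yields $E^1[u(t),v(t)]=\frac{1}{4}\|(u(t),v(t))\|_{H^1\times H^1}^2\ge d_1=d$, i.e. $\|(u(t),v(t))\|_{H^1\times H^1}^2\ge4d$ for all $t\ge0$. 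Feeding this back in,
\begin{equation*}
\frac{\mathrm{d}^2}{\mathrm{d}t^2}y(t)\ \ge\ 8d-8E(0)=:c_0,\qquad\forall\,t\ge0,
\end{equation*}
and $c_0>0$ precisely because $E(0)<d$ is the defining inequality of $\mathcal{E}$ (this is also the only step where $\beta\ge0$ enters, through Theorem \ref{thm:char:d:int}).

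Finally, since $\frac{\mathrm{d}}{\mathrm{d}t}y(0)=2\langle(u_0,v_0),(u_1,v_1)\rangle_{(L^2\times L^2)^2}$ is a finite number, integrating the last inequality (legitimate by Proposition \ref{prop:cauchy}) gives $\frac{\mathrm{d}}{\mathrm{d}t}y(t)\ge\frac{\mathrm{d}}{\mathrm{d}t}y(0)+c_0t$; taking $t_\ast$ strictly larger than $\max\{0,-\frac{1}{c_0}\frac{\mathrm{d}}{\mathrm{d}t}y(0)\}$ then forces $\frac{\mathrm{d}}{\mathrm{d}t}y(t)>0$ for all $t\ge t_\ast$, which is exactly the assertion (and in fact $\frac{\mathrm{d}}{\mathrm{d}t}y(t)\to+\infty$, hence $y(t)\to+\infty$, as will be needed afterwards). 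I do not expect a real obstacle here: the only points requiring care are the algebraic rewriting of $\frac{\mathrm{d}^2}{\mathrm{d}t^2}y$ in terms of $E$ and the verification that $(u(t),v(t))\in\mathcal{N}'$, which is where one needs $K(t)<0$ to guarantee that $(u(t),v(t))$ is nonzero in $H^1\times H^1$; all the genuine content is imported from Proposition \ref{prop:inv:ext} and the new characterization of $d$.
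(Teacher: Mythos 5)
Your proof is correct, but it takes a genuinely different and more direct route than the paper. The paper argues by contradiction: assuming $\frac{\mathrm{d}}{\mathrm{d}t}y(t)\leq 0$ for all $t$, it uses convexity to deduce $y(t)\to A$, proves $A>0$ in a separate Claim \ref{claim:6} (a case analysis on the sign of $E(0)$ via Gagliardo--Nirenberg), extracts a subsequence with $\frac{\mathrm{d}^2}{\mathrm{d}t^2}y(t_n)\to 0$, concludes the kinetic energy vanishes along it, and then contradicts Corollary \ref{lem:tech} since $J[u(t_n),v(t_n)]\to E(0)<d$ while $\liminf_n J[u(t_n),v(t_n)]\geq d$. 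You instead observe that the flow-invariance of $\mathcal{E}$ forces $(u(t),v(t))\in\mathcal{N}'$ for every $t$ (nonzero because $K<0$), so Theorem \ref{thm:char:d:int} gives the pointwise-in-time bound $\|(u(t),v(t))\|_{H^1\times H^1}^2\geq 4d$, and the identity $\frac{\mathrm{d}^2}{\mathrm{d}t^2}y=6\|(u_t,v_t)\|_{L^2\times L^2}^2+2\|(u,v)\|_{H^1\times H^1}^2-8E(0)$ (which I checked, and which the paper itself records just after the claim) yields the uniform bound $\frac{\mathrm{d}^2}{\mathrm{d}t^2}y\geq 8(d-E(0))>0$; one integration finishes. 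Your version buys a quantitative convexity constant, eliminates the need for Claim \ref{claim:6} and the subsequence extraction entirely, and immediately delivers $y(t)\to\infty$, which the paper needs in the next step anyway. Both arguments ultimately rest on the same ingredient, the characterization of $d$ as the infimum of $E^1$ over $\mathcal{N}'$ (you use Theorem \ref{thm:char:d:int} directly; the paper uses it through Corollary \ref{lem:tech}). The only point to keep an eye on is the a.e.\ differentiability of $y$ when integrating $\frac{\mathrm{d}^2}{\mathrm{d}t^2}y\geq c_0$, but this is exactly the regularity the paper itself invokes from Proposition \ref{prop:cauchy}, so it is not a gap relative to the paper's standards.
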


\begin{proof}[Proof of Claim \ref{claim:5}]
	We argue by contradiction. Assume $\frac{\mathrm{d}}{\mathrm{d}t}y(t)\leq 0$ for all $t\geq 0$. Under this assumption, if $y(0)=0$, then we have $y(t)=0$ for all $t\geq0$. Consequently $\frac{\mathrm{d}^2}{\mathrm{d}t^2}y(t)=\frac{\mathrm{d}}{\mathrm{d}t}y(t)=0$ for all $t\geq 0$. But this contradicts \eqref{eq:1300}. Thus in the following, we assume $y(0)>0$.
	
	Since $y(t)$ is convex and decreasing, we may assume that $y(t)$ converges to some non-negative number $A$.
	\begin{claim}\label{claim:6}
		$A >0.$
	\end{claim}
	Admitting this claim, we can take a subsequence $(t_n)$ with $t_n\rightarrow\infty$ as $n$ tends to infinity, such that
	\begin{equation*}
		y(t_n)\rightarrow A,\ \ \frac{\mathrm{d}}{\mathrm{d}t}y(t_n)\rightarrow 0,
	\end{equation*}
and \begin{equation}\label{eq:1400}
	\frac{\mathrm{d}^2}{\mathrm{d}t^2}y(t_n)\rightarrow0,
\end{equation}
as $n$ tends to infinity. Using $K[u(t),v(t)]<0$, we substitute \eqref{eq:1300} into this last limitation to obtain
\begin{equation}\label{eq:1401}
	\|\partial_tu(t_n)\|^2_{L^2(\mathbb{R}^3)}+\|\partial_tv(t_n)\|^2_{L^2(\mathbb{R}^3)}\rightarrow_{n\rightarrow\infty}0.
\end{equation}
We then have
\begin{equation}\label{eq:1302}
	\lim_{n\rightarrow\infty}J[u(t_n),v(t_n)]=\lim_{n\rightarrow{\infty}}E[u(t_n,v(t_n))]=E[u_0,v_0]
\end{equation}
where in the last equality we have used the conservation of energy. On the other hand, since $A>0$, we can take $n$ large enough, say $n\geq L$ for some $L>0$ so that $y(t_n)\neq 0$. Then by Lemma \ref{lem:tech}, we have \begin{equation}\label{eq:1301}
	\liminf_{n\rightarrow\infty}J[u(t_n),v(t_n)]\geq d.
\end{equation}
Recall that $\left((u_0,u_1),(v_0,v_1)\right)\in\mathcal{E}$ and hence $d>E(u_0,v_0)$. Using this strict inequality, we combine \eqref{eq:1302} and \eqref{eq:1301} to conclude
\begin{equation*}
	\liminf_{n\rightarrow\infty}J[u(t_n),v(t_n)]>\lim_{n\rightarrow\infty}J[u(t_n),v(t_n)],
\end{equation*}
which is impossible. Thus the initial assumption is false. This finishes the proof of Claim \ref{claim:5}, modulo the proof of Claim \ref{claim:6}.
\end{proof}

We now give
\begin{proof}[Proof of Claim \ref{claim:6}]
	We argue by contradiction. Assume that $A=0$. In this case, we still have the limitation \eqref{eq:1400} and hence \eqref{eq:1401} and \eqref{eq:1302} as well. Combining \eqref{eq:1400} with \eqref{eq:1300}, we get
	\begin{equation}\label{eq:1402}
		\lim_{n\rightarrow{\infty}} K[u(t_n),v(t_n)]=0.
	\end{equation}
	If we substitute the expressions of $J[u(t_n),v(t_n)]$ and $K[u(t_n),v(t_n)]$, and this limitation into \eqref{eq:1302}, we can get
	\begin{equation}\label{eq:1405}
		\lim_{{n\rightarrow\infty}}\int_{\mathbb{R}^3}\left[u^4(t_n)+v^4(t_n)+2\beta u^2(t_n)v^2(t_n)\right]\mathrm{d}x=4E[u_0,u_1;v_0,v_1]
	\end{equation}
	and
	\begin{equation}\label{eq:1406}
		\lim_{n\rightarrow\infty}\left[\|u(t_n)\|^2_{H^1}+\|v(t_n)\|^2_{H^1}\right]=4E[u_0,u_1;v_0,v_1].
	\end{equation}
	This implies that $E[u_0,u_1;v_0,v_1]\geq 0$. We are thus in the following two alternatives.
	\begin{itemize}
		\item[Case 1:] $E[u_0,u_1;v_0,v_1]> 0$. By using the interpolation and Sobolev inequality (or directly Gargliado-Nirenberg inequality), we get for each $n$
		\begin{equation}\label{eq:1403}
			\int_{\mathbb{R}^3}\left[u^4(t_n)+v^4(t_n)+2\beta u^2(t_n)v^2(t_n)\right]\mathrm{d}x\leq C \left(\|u(t_n)\|^2_{L^2}+\|v(t_n)\|^2_{L^2}\right)^{\frac{1}{2}} \left(\|u(t_n)\|^2_{H^1}+\|v(t_n)\|^2_{H^1}\right)^{\frac{3}{2}}.
		\end{equation}
		We first bound LHS of \eqref{eq:1403} from below. By \eqref{eq:1405}, we can take $N_1>0$ so that for all $n\geq N_1$ there holds
		\begin{equation}\label{eq:1407}
			\int_{\mathbb{R}^3}\left[u^4(t_n)+v^4(t_n)+2\beta u^2(t_n)v^2(t_n)\right]\mathrm{d}x>2E[u_0,u_1;v_0,v_1].
		\end{equation}
		We next bound RHS of \eqref{eq:1403} from above. By \eqref{eq:1406}, we can take $N_2>0$ so that for all $n\geq N_2$ we have
		\begin{equation}\label{eq:1408}
			\left[\|u(t_n)\|^2_{H^1}+\|v(t_n)\|^2_{H^1}\right]<6E[u_0,u_1;v_0,v_1].
		\end{equation}
		Since $A=0$, for the same $\delta>0$, we can take $N_3>0$ so that for all $n\geq N_3$ we have
		\begin{equation}\label{eq:1409}
			\|u(t_n)\|^2_{L^2}+\|v(t_n)\|^2_{L^2}<\delta.
		\end{equation}
		
		Inserting \eqref{eq:1407},\eqref{eq:1408} and \eqref{eq:1409} into \eqref{eq:1403}, we get for all $n\geq \max(N_1,N_2,N_3)$
		\begin{equation}
			2E[u_0,u_1;v_0,v_1]<C\delta^\frac{1}{2}\times(6E[u_0,u_1;v_0,v_1])^{\frac{3}{2}}.
		\end{equation}
		Since $E[u_0,u_1;v_0,v_1]>0$, if we take $\delta>0$ small enough, we can get \begin{equation*}
			E[u_0,u_1;v_0,v_1]>d.
		\end{equation*}
		This is imppossible, since the data is in $\mathcal{E}$.
		\item[Case 2:] $E[u_0,u_1;v_0,v_1]=0$. Since the data is in $\mathcal{E}$ and $\mathcal{E}$ is invariant under the flow of \eqref{eq:skg} by Proposition \ref{prop:inv:ext}, for each $n$ we have the following strict inequality
		\begin{equation}
			\|u(t_n)\|^2_{H^1}+\|v(t_n)\|^2_{H^1}<	\int_{\mathbb{R}^3}\left[u^4(t_n)+v^4(t_n)+2\beta u^2(t_n)v^2(t_n)\right]\mathrm{d}x.
		\end{equation}
		Combining this with \eqref{eq:1403}, we obtain for each $n$
		\begin{equation}\label{eq:1410}
			\left[\|u(t_n)\|^2_{H^1}+\|v(t_n)\|^2_{H^1}\right]\times \left[1-C \left(\|u(t_n)\|^2_{L^2}+\|v(t_n)\|^2_{L^2}\right)^{\frac{1}{2}} \left(\|u(t_n)\|^2_{H^1}+\|v(t_n)\|^2_{H^1}\right)^{\frac{1}{2}}\right]<0
		\end{equation}
		Recalling that $E[u_0,u_1;v_0,v_1]=0$ and the limitation \eqref{eq:1406}, we take $N>0$ so that for all $n \geq N$ there holds
		\begin{equation}
			C \left(\|u(t_n)\|^2_{L^2}+\|v(t_n)\|^2_{L^2}\right)^{\frac{1}{2}} \left(\|u(t_n)\|^2_{H^1}+\|v(t_n)\|^2_{H^1}\right)^{\frac{1}{2}}<1
		\end{equation}
		and hence \begin{equation}
			\left[1-C \left(\|u(t_n)\|^2_{L^2}+\|v(t_n)\|^2_{L^2}\right)^{\frac{1}{2}} \left(\|u(t_n)\|^2_{H^1}+\|v(t_n)\|^2_{H^1}\right)^{\frac{1}{2}}\right]>0.
		\end{equation}
		We then see that \eqref{eq:1410} is impossible, since the product of two non-negative numbers can not be strictly smaller than zero.
	\end{itemize}
	Combining Case 1 and Case 2, we see that $A>0$. This finishes the proof of Claim \ref{claim:6}.
\end{proof}

Let's come back to the proof of \textit{(ii)}. Combing Claim \ref{claim:5} with the strict convexity of $y$, we conclude that $2y(t)-\max\left(0,8E(0)\right)>\delta>0$ if $t\geq t_{\ast\ast}$ for some $t_{\ast\ast}\geq t_{\ast}$.

 Using definitions of $K$ and $E$, together with the conservation of energy, we rewrite \eqref{eq:ene:d2}
\begin{equation} \frac{\mathrm{d}^2y}{\mathrm{d}t^2}(t)=2\int_{\mathbb{R}^3}\left(3|u_t|^2+3|v_t|^2+|\nabla u|^2+|\nabla v|^2+u^2+v^2\right)\mathrm{d}x-8E(0).
\end{equation}
Then for $t\geq t_{\ast\ast}$, we conclude from the discussion in the previous paragraph
\begin{equation}\label{eq:5000}
	\frac{\mathrm{d}^2y}{\mathrm{d}t^2}(t)\geq 2\int_{\mathbb{R}^3}\left(3|u_t|^2+3|v_t|^2\right)\mathrm{d}x+\delta.
\end{equation}
Applying Cauchy-Schwarz inequality on the right hand side of \eqref{eq:ene:d1}, and substituting the resulted inequality into \eqref{eq:5000}, we obtain
\begin{equation}
	\frac{\mathrm{d}^2y}{\mathrm{d}t^2}(t)\geq \frac{3}{2} \frac{\left(\frac{\mathrm{d}}{\mathrm{d}t}y\right)^2}{y(t)}+\delta,\ \ \forall t\geq t_{\ast\ast}.
\end{equation}
At time $t=t_{\ast\ast}$, it follows from the above discussion and \textbf{Claim \ref{claim:5}} that $y(t_{\ast\ast})>0$ and $\frac{\mathrm{d}}{\mathrm{d}t}y(t_{\ast\ast})>0$. At this point, we can apply \textbf{Lemma \ref{lem:levine:1}} with
\[
	y(t)=\psi(t),\ t_b=t_{\ast\ast},\ \gamma=\frac{3}{2}
\]
to conclude that $y(t)$ becomes infinity before some finite time and hence $\left(u(t),v(t)\right)$ blows up in finite time. This contradicts to the initial assumption that the solution exists for all the time. Thus the solution must blow up before some finite time. This completes the proof of \textit{(ii)} in \textbf{Theorem \ref{thm:payne-sattinger:int}}.	
\end{proof}

%%%%%%%%%%%%%%%%%%%%%%%%%%%%%%%%%%%%%%%%%%
\section{Remarks on the criteria}
\label{sec:diff}

From Theorems \ref{thm:neg:int},\ref{thm:skg:int} and \ref{thm:payne-sattinger:int}, we see that different conditions on initial data can lead its corresponding solution to blow up in finite time. Then it is natural to ask for the relations between these conditions. This is the first task of this section.

The second task is to give a comment on Theorem \ref{thm:skg:int}, indicating that there does exist initial data satisfying the assumptions there. {The reason for us to do this is that there are too many (three, to be exact) restrictions on the datum, which forces us to make sure that they do not contradict with each other}.

\subsection{Comparing Theorem \ref{thm:neg:int} with Theorem \ref{thm:payne-sattinger:int}}\label{sect:5.1}

In order to finish the first task, we first give a lemma.
\begin{lemma}\label{lem:1to3}
 For $\beta\geq -1$, we have
 \begin{enumerate}[(i)]
 	\item $\{\left((u_0,u_1),(v_0,v_1)\right)\in \mathcal{H}\times\mathcal{H}| E[u_0,u_1;v_0,v_1]<0\}\subset\mathcal{E}$;
 	\item $\{\left((u_0,u_1),(v_0,v_1)\right)\in \mathcal{H}\times\mathcal{H}| E[u_0,u_1;v_0,v_1]=0, \big\langle(u_0,v_0),(u_1,v_1)\big\rangle_{(L^2\times L^2)^2}>0\}\subset  \mathcal{E}.$
 \end{enumerate}
\end{lemma}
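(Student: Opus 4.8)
The plan is to reduce both statements to a single algebraic identity at the initial time. Combining the definitions \eqref{energy} of $E$ and \eqref{eq:3000} of $K$ (equivalently, taking \eqref{eq:2000} with $\gamma=1$ and evaluating at $t=0$), the quartic---and hence $\beta$-dependent---terms cancel, and one is left with
\begin{equation*}
K[u_0,v_0]=4E(0)-2\big\|(u_1,v_1)\big\|_{L^2\times L^2}^2-\big\|(u_0,v_0)\big\|_{H^1\times H^1}^2 .
\end{equation*}
Since $d>0$ (recorded just before the definitions of $\mathcal{W}$ and $\mathcal{E}$), in both cases of the lemma one has $E(0)\leq 0<d$, so the energy requirement in the definition of $\mathcal{E}$ holds automatically; it therefore remains only to verify the strict sign $K[u_0,v_0]<0$.

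For \textit{(i)} I would simply observe that $E(0)<0$ together with the non-negativity of $\|(u_1,v_1)\|_{L^2\times L^2}^2$ and $\|(u_0,v_0)\|_{H^1\times H^1}^2$ yields, through the displayed identity, $K[u_0,v_0]\leq 4E(0)<0$; hence $\left((u_0,u_1),(v_0,v_1)\right)\in\mathcal{E}$.

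For \textit{(ii)}, with $E(0)=0$ the identity becomes $K[u_0,v_0]=-2\|(u_1,v_1)\|_{L^2\times L^2}^2-\|(u_0,v_0)\|_{H^1\times H^1}^2\leq 0$, and the only point needing care is that equality cannot occur: if $K[u_0,v_0]=0$ then $\|(u_1,v_1)\|_{L^2\times L^2}=\|(u_0,v_0)\|_{H^1\times H^1}=0$, so $(u_0,u_1)=(v_0,v_1)=(0,0)$, which contradicts the angle condition $\langle(u_0,v_0),(u_1,v_1)\rangle_{(L^2\times L^2)^2}>0$. Thus $K[u_0,v_0]<0$, and again $\left((u_0,u_1),(v_0,v_1)\right)\in\mathcal{E}$.

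I do not expect a genuine obstacle here: the whole argument is essentially a one-line computation once the identity relating $E$ and $K$ is in hand, and the choice $\gamma=1$ removes the coupling term altogether, so the sign condition on $\beta$ that is delicate elsewhere (there it guarantees positivity of that term) plays no role in this deduction. The only point I would be careful to state explicitly is that the conclusion takes values in $\mathcal{E}$, whose very definition presupposes $d>0$; this positivity is established in the paper for $\beta\geq 0$ via Lemma \ref{lem:keyvariation}, and it is precisely this lemma that makes \textbf{Theorem} \ref{thm:payne-sattinger:int}(ii) recover \textbf{Theorem} \ref{thm:neg:int} on the common range $\beta\in[0,\infty)$, as announced in \textbf{Remark} \ref{rmk:01080}.
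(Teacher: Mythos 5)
Your proof is correct, and the key identity is right: subtracting the definition \eqref{eq:3000} of $K$ from four times the definition \eqref{energy} of $E$ at $t=0$ makes the quartic terms cancel exactly, giving $K[u_0,v_0]=4E(0)-2\|(u_1,v_1)\|_{L^2\times L^2}^2-\|(u_0,v_0)\|_{H^1\times H^1}^2$, which is precisely \eqref{eq:2000} with $\gamma=1$. From there both parts follow as you say, with the equality case in \textit{(ii)} correctly ruled out by the angle condition. The paper's own proof (Section \ref{sect:5.1}) takes a slightly different algebraic route: it never forms this identity, but instead uses the hypothesis $\beta\geq-1$ to establish the pointwise inequality $u_0^4+v_0^4+2\beta u_0^2v_0^2\geq(1+\beta)(u_0^4+v_0^4)-\,2u_0^2v_0^2+2u_0^2v_0^2\geq0$, i.e.\ \eqref{eq:3001}, and then compares $\|(u_0,v_0)\|_{H^1\times H^1}^2$ first with one half of the quartic integral (via $E(0)\leq0$) and then with the full quartic integral (via its non-negativity), concluding $K[u_0,v_0]<0$ through the chain of inequalities. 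What your cancellation buys is that the restriction $\beta\geq-1$ plays no role in the deduction $K[u_0,v_0]<0$ at all — the argument works for every $\beta\in\mathbb{R}$ — whereas in the paper's version that hypothesis is genuinely invoked. Of course, as you note, the conclusion is only meaningful where $\mathcal{E}$ is (i.e.\ where $d>0$ is established, namely $\beta\geq0$), so the extra generality is not exploited; but your version isolates more cleanly where the sign condition on $\beta$ does and does not matter.
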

\begin{proof}
	Since $E[u_0,u_1;v_0,v_1]\leq0<d$, it suffices to show, if the data $\left((u_0,u_1),(v_0,v_1)\right)\in\mathcal{H}\times\mathcal{H}$ satisfying $E[u_0,u_1;v_0,v_1]<0$ or $E[u_0,u_1;v_0,v_1]=0$ and $\big\langle(u_0,v_0),(u_1,v_1)\big\rangle>0$, then $K[u_0,v_0]<0$.
	
By definitions of $E$ and $K$, we have
\begin{equation}\label{eq:3002}
	E[u_0,u_1;v_0,v_1]=\frac{1}{2}\int_{\mathbb{R}^3} \left[|\nabla u_0|^2+|u_0|^2+|u_1|^2+|\nabla v_0|^2+|v_0|^2+|v_1|^2-\frac{1}{2}\left( u^4_0+v^4_0+2\beta u^2_0v^2_0\right)\right]\mathrm{d}x,
\end{equation}
and
\begin{equation}\label{eq:3003}
	K[u_0,v_0]=\int_{\mathbb{R}^3} \left[|\nabla u_0|^2+|u_0|^2+|\nabla v_0|^2+|v_0|^2\right]\mathrm{d}x-\int_{\mathbb{R}^3}\left[ u^4_0+v^4_0+2\beta u^2_0v^2_0\right]\mathrm{d}x.
\end{equation}
Using the assumption $\beta\geq -1$ and the elementary inequality $a^2+b^2-2ab\geq 0$,
we can get
\begin{equation}\label{eq:3001}
\int_{\mathbb{R}^3}\left[ u^4_0+v^4_0+2\beta u^2_0v^2_0\right]\mathrm{d}x
\geq 0.
\end{equation}

Assume $E[u_0,u_1;v_0,v_1]<0$. We then have
\begin{equation}
	\int_{\mathbb{R}^3}\left[|\nabla u_0|^2+|u_0|^2+|\nabla v_0|^2+|v_0|^2\right]\mathrm{d}x<\frac{1}{2}\int_{\mathbb{R}^3}\left[ u^4_0+v^4_0+2\beta u^2_0v^2_0\right]\mathrm{d}x\leq\int_{\mathbb{R}^3}\left[ u^4_0+v^4_0+2\beta u^2_0v^2_0\right]\mathrm{d}x.
\end{equation}
By connecting the first and the third inequalities, we see that $K[u_0,v_0]<0$, finishing the proof of \textit{(i)}.

To prove \textit{(ii)}, let's assume $E[u_0,u_1;v_0,v_1]=0$ and $\big\langle(u_0,v_0),(u_1,v_1)\big\rangle_{(L^2\times L^2)^2}>0$. In this case, we have the strict inequality in \eqref{eq:3001}. For otherwise, $E[u_0,u_1;v_0,v_1]=0$ implies $$2\int_{\mathbb{R}^3} \left[|\nabla u_0|^2+|u_0|^2+|\nabla v_0|^2+|v_0|^2\right]\mathrm{d}x=\int_{\mathbb{R}^3}\left[ u^4_0+v^4_0+2\beta u^2_0v^2_0\right]\mathrm{d}x=0.$$ But this in turn implies $u_0=v_0=0$ and hence $\big\langle(u_0,v_0),(u_1,v_1)\big\rangle_{(L^2\times L^2)^2}=0$. This contradicts the assumption. Thus we have
\begin{equation}
	\int_{\mathbb{R}^3}\left[ u^4_0+v^4_0+2\beta u^2_0v^2_0\right]\mathrm{d}x
	> 0.
\end{equation}
Therefore
\begin{equation}
		\int_{\mathbb{R}^3}\left[ |\nabla u_0|^2+|u_0|^2+|\nabla v_0|^2+|v_0|^2\right]\mathrm{d}x=\frac{1}{2}\int_{\mathbb{R}^3}\left[ u^4_0+v^4_0+2\beta u^2_0v^2_0\right]\mathrm{d}x<\int_{\mathbb{R}^3}\left[ u^4_0+v^4_0+2\beta u^2_0v^2_0\right]\mathrm{d}x.
\end{equation}
By connecting the first and the third inequalities, we see that $K[u_0,v_0]<0$, finishing the proof of \textit{(ii)}.
\end{proof}

Consequently, for $\beta\geq 0$, by Lemma \ref{lem:1to3}, we see that Theorem \ref{thm:payne-sattinger:int} embodies Theorem \ref{thm:neg:int}, as was commented in \textbf{Remark \ref{rmk:01080}}.

%In order to state the upcoming constructions succinctly, we first make some elementary  calculations:
%\begin{equation}\label{}
%	\|u_0\|_{H^1}^2+\|v_0\|^2_{H^1}=2\|k_1Q\|^2_{H^1}=2\omega_3 k_1^2\left(\int_{\mathbb{R}_+}(|\frac{dQ(r)}{dr}|^2+|Q(r)|^2)r^2dr\right),
%\end{equation}
%\begin{equation}
%\|u_0\|^4_{L^4}+\|v_0\|^4_{L^4}=2\|k_2Q\|_{L^4}^4=2\omega_3 k_1^4\left(\int_{\mathbb{R}_+}|Q(r)|^4r^2dr\right),
%\end{equation}
%\begin{equation}
%\|u_1\|^2_{L^2}+\|v_1\|^2_{L^2}=2\|k_2Q\|_{L^2}^2=2\omega_3 k_2^2\left(\int_{\mathbb{R}_+}|Q(r)|^2r^2dr\right),
%\end{equation}
%and
%\begin{equation}\label{}
%\big\langle(u_0,v_0),(u_1,v_1)\big\rangle=2k_1k_2\|Q\|_{L^2}^2=2k_1k_2\omega_3\left(\int_{\mathbb{R}_+}|Q(r)|^2r^2dr\right),
%\end{equation}
%where $\omega_3$ denotes the volume of 2-sphere.

%%%%%%%%%%%%%%%%%%%5
\subsection{Construction of initial data satisfying Theorem \ref{thm:skg:int}}\label{sect:5.2}
This subsection is devoted to constructing initial data with arbitrarily large energy and satisfying all assumptions in Theorem \ref{thm:skg:int}.
We consider the initial data of form
\begin{equation}\label{eq:500}
	(u_0,u_1)=(k_1Q(r),0), (v_0,v_1)=(k_2Q(r),0),
\end{equation}
where $r=\sqrt{x_1^2+x_2^2+x_3^2}, k_1, k_2\in \mathbb{R}$ and $Q(r)\in C^\infty_0(\mathbb{R}_+)$.  Our task then  becomes to seek conditions on $k_1, k_2$ and $Q$ so that the datum $(u_0,u_1)$ and $(v_0,v_1)$ satisfy the following conditions
\begin{enumerate}
\item  the assumptions in Theorem \ref{thm:skg:int} hold;
\item  the energy $E[u_0,u_1;v_0,v_1]$ can be arbitrarily large.
\end{enumerate}

To obtain (1),  we first claim that the first condition and the second condition in Theorem \ref{thm:skg:int} are automatically satisfied, if each of $k_1,k_2$ are not equal to zero and   $Q\not \equiv0$ with compact support.

Next we claim that the third condition in Theorem \ref{thm:skg:int} holds, i.e.
\begin{equation}\label{condi3}
	\frac{1}{2}\int_{\mathbb{R}_+}r^2|Q(r)|^2\mathrm{d}r\geq \int_{\mathbb{R}_+}r^2|Q(r)|^2\mathrm{d}r+\int_{\mathbb{R}_+}r^2|\frac{\mathrm{d} Q}{\mathrm{d}r}(r)|^2\mathrm{d}r-G(\beta,k_1,k_2)\int_{\mathbb{R}_+}r^2|Q(r)|^4\mathrm{d}r>0,
\end{equation}
where $G(\beta,k_1,k_2)=\frac{k_1^2+k_2^2}{2}-\frac{(1-\beta) k_1^2k_2^2}{k_1^2+k_2^2}$.
Indeed, we can show this by using the fact that
\begin{equation*}
	y(0)=\frac{(k_1^2+k^2_2)\omega_3}{2}\int_{\mathbb{R}_+}r^2|Q(r)|^2\mathrm{d}r, ~P(0)=0
\end{equation*}
and $E(0)=\frac{\omega_3}{2}\int_{\mathbb{R}_+}(k^2_1+k_2^2)r^2\left(|Q(r)|^2+|\frac{\mathrm{d} Q}{\mathrm{d}r}(r)|^2\right)\mathrm{d}r-\omega_3\left(\frac{(k_1^2+k^2_2)^2}{4}-\frac{(1-\beta) k_1^2k_2^2}{2}\right)\int_{\mathbb{R}_+}r^2|Q(r)|^4\mathrm{d}r$,
where $\omega_3$ denotes the volume of 2-sphere.

Note that for any fixed $\beta\in \mathbb{R}$, $\mathbb{R}_+$ will fall into the range of $G(\beta,k_1,k_2)$ with respect to variables $k_1$ and $k_2$.
Then for any $Q\not \equiv0$ with compact support, setting $G(\beta,k_1,k_2)=\frac{\frac{1}{2}\int_{\mathbb{R}_+}r^2|Q(r)|^2\mathrm{d}r+\int_{\mathbb{R}_+}r^2|\frac{\mathrm{d} Q}{\mathrm{d}r}(r)
|^2\mathrm{d}r}{\int_{\mathbb{R}_+}r^2|Q(r)|^4\mathrm{d}r}$, we can obtain \eqref{condi3}. This finishes the first part of construction of initial data.

To construct initial data satisfy (2),  let us introduce a smooth cutoff function $\chi(x)\in C_0^\infty (\mathbb{R}_+)$, with the following properties
$\chi(x)\leq 1, \forall x\in \mathbb{R}_+$, {
$\chi(x)=1$  if $|x-3|\leq 1$ and $\chi(x)=0$ if $|x-3|\geq 2$}%\footnote{\color{red}$\chi$ shall vanish around the %origin, otherwise there might be the problem of regularity.
We take $Q(r)=\chi\left(\frac{r}{R}\right)$, where $R$ is a constant whose value will be determined below. With this choice of $Q$, substituting \eqref{eq:500} into the defining expression of the energy, we get
\begin{equation*}
	E(0)=\omega_3 \frac{\frac{1}{4}\left(\int_{\mathbb{R}_+}r^2|Q(r)|^2\mathrm{d}r\right)^2+\frac{1}{2}\left(\int_{\mathbb{R}_+}r^2|Q(r)|^2\mathrm{d}r\right)\left(\int_{\mathbb{R}_+}r^2|\frac{\mathrm{d} Q}{\mathrm{d}r}(r)|^2\mathrm{d}r\right)}{\int_{\mathbb{R}_+}r^2|Q(r)|^4\mathrm{d}r}.
\end{equation*}
Then to obtain condition (2),
it suffices to prove
$$for ~any ~\delta>0, ~we ~can ~choose ~R ~such ~that ~E(0)\geq \delta. $$

Indeed, this is achieved by the following computations
\begin{equation*} \int_{\mathbb{R}_+}r^2|Q(r)|^4\mathrm{d}r=\frac{8R^3}{3}+\int_{\mathbb{R}_+\setminus [2R,4R]}r^2\chi^4\left(\frac{r}{R}\right)\mathrm{d}r<\frac{8R^3}{3}+\int_{\mathbb{R}_+\setminus [2R,4R]}r^2\chi^2\left(\frac{r}{R}\right)\mathrm{d}r=\int_{\mathbb{R}_+}r^2|Q(r)|^2\mathrm{d}r.
\end{equation*}
So that $E(0)>\frac{2\omega_3 R^3}{3}$. Thus $E(0)\rightarrow \infty$ as $R\rightarrow \infty$. This ends the construction of initial data.

{Combing this with the fact that initial data with negative initial energy belongs to $\mathcal{E}$,
	we remark that the assumptions of Theorem \ref{thm:skg:int} and Theorem \ref{thm:payne-sattinger:int} are complementary to each other. }

%%%%%%%%%%%%%%%%%%%%%%%%%%%%%%%%%%%%%%%%%%

\end{document}